\renewcommand{\t}[1]{\mathrm{#1}}
\newtheorem{PARA}{}[section]
\newtheorem{theorem}[PARA]{Theorem}
\newtheorem*{ctheorem}{Theorem}
\newtheorem{corollary}[PARA]{Corollary}
\newtheorem{lemma}[PARA]{Lemma}
\newtheorem{proposition}[PARA]{Proposition}
\newtheorem{definition}[PARA]{Definition}
\theoremstyle{definition}
\newtheorem{remark}[PARA]{Remark}
\theoremstyle{theorem}
\newtheorem{example}[PARA]{Example}
\newcommand{\para}{\begin{PARA}\rm}
\newcommand{\arap}{\end{PARA}\rm}
\newcommand{\dfn}{\begin{definition}\rm}
\newcommand{\nfd}{\end{definition}\rm}
\newcommand{\rmk}{\begin{remark}\rm}
\newcommand{\kmr}{\end{remark}\rm}
\newcommand{\xmpl}{\begin{example}\rm}
\newcommand{\lpmx}{\end{example}\rm}
\newcommand{\cC}{\mathcal{C}}
\newcommand{\cD}{\mathcal{D}}
\newcommand{\cM}{\mathcal{M}}
\newcommand{\cU}{\mathcal{U}}
\newcommand{\cV}{\mathcal{V}}
\newcommand{\fS}{\mathfrak{S}}
\newcommand{\fun}{\operatorname{Fun}}
\newcommand{\Top}{\operatorname{Top}}
\renewcommand{\top}{\operatorname{Top}}
\newcommand{\one}
{{{\mathchoice \mathrm{ 1\mskip-4mu l} \mathrm{ 1\mskip-4mu l}
\mathrm{ 1\mskip-4.5mu l} \mathrm{ 1\mskip-5mu l}}}}
\newcommand{\C}{{\mathcal{C}}}
\newcommand{\cc}{\mathbb{C}}
\newcommand{\D}{{\mathbb{D}}}
\newcommand{\M}{{\mathcal{M}}}
\newcommand{\N}{{\mathbb{N}}}
\newcommand{\Q}{{\mathbb{Q}}}
\newcommand{\R}{{\mathbb{R}}}
\newcommand{\Z}{{\mathbb{Z}}}
\renewcommand{\Im}{\mathrm{ Im\,}}       
\newcommand{\Aut}{\mathrm{ Aut}}          
\renewcommand{\hom}{\mathrm{Hom}}
\newcommand{\pt}{\operatorname{pt}}
\newcommand{\from}{\leftarrow}
\newcommand{\eps}{{\varepsilon}}
\def\NABLA#1{{\mathop{\nabla\kern-.5ex\lower1ex\hbox{$#1$}}}}
\def\Nabla#1{\nabla\kern-.5ex{}_{#1}}
\def\Tabla#1{\Tilde\nabla\kern-.5ex{}_{#1}}
\renewcommand{\Tilde}{\widetilde}
\newcommand{\p}{{\partial}}
\newcommand{\MM}{\mathbb{M}}
\newcommand{\X}{\mathcal{X}}
\newcommand{\Y}{\mathcal{Y}}
\newcommand{\E}{\mathcal{E}}
\newcommand{\funnel}{{\boldsymbol{\t{funnel}}}}
\begin{document}

\title[Deligne-Mumford operad and the circle]{The Deligne-Mumford operad as a trivialization of the circle action}
\author{Alexandru Oancea}
\address{Universit\'e de Strasbourg
\newline
Institut de Recherche Math\'ematique Avanc\'ee (IRMA)
\newline
Strasbourg, France}
\email{oancea@unistra.fr}
\author{Dmitry Vaintrob}
\address{Institut des Hautes \'Etudes Scientifiques (IHES) 
\newline 
Bures-sur-Yvette, France}
\email{mvaintrob@gmail.com}
\date{April 11, 2025}


\begin{abstract} We prove that the tree-like Deligne-Mumford operad is a homotopical model for the trivialization of the circle in the higher-genus framed little discs operad. Our proof is based on a geometric argument involving nodal annuli. We use as a model for the higher-genus framed little discs an operad of Riemann surfaces with analytically parametrized boundary. We develop the formalism of topological moduli problems as a framework to accommodate the orbifold nature of the Deligne-Mumford operad.  
\end{abstract}

\maketitle

{\tiny 
\tableofcontents
}


\section{Introduction}  \label{sec:intro}

An operad is a structure that contains spaces of operations with multiple inputs and one output, and rules for composing these operations. An algebra over an operad is a given incarnation of these operations and composition rules. Associative algebras, Lie algebras, commutative algebras, Gerstenhaber algebras, Batalin-Vilkovisky algebras, Hypercommutative algebras, these are all examples of algebras over suitably defined operads Ass, Lie, Com, Gerst, BV, HyperCom. 

Many such algebraic operads can be described as homologies of topological operads, i.e., operads with topological spaces of operations. Famously, Gerst is the homology of the operad of little 2-discs, whose spaces of operations consist of Euclidean embeddings of smaller discs, seen as inputs, into the unit disc, seen as the output, and where the composition is given by rescaling the unit disc and fitting it into some other small disc. Similarly, BV is the homology of the operad of framed little 2-discs, analogous to that of little 2-discs but involving the extra data of a marked point on the boundary of each of the discs under consideration. See Figure~\ref{fig:FLD}. Another example is HyperCom, which is the homology of the Deligne-Mumford-Knudsen operad, whose space of $k$-to-$1$ operations is the compactified moduli space $\overline{\cM}_{0,k+1}$ of genus 0 curves with $k+1$ marked points, of which one is labeled as an output and the other ones are labeled as inputs. The composition is represented by the nodal curve obtained by identifying an input of one curve with the output of another. See Figure~\ref{fig:DMK}.

\begin{figure} [ht]
\centering
\scalebox{.5}{\input{FLD.pstex_t}}
\caption{Composition in the operad of framed little $2$-discs.}
\label{fig:FLD}
\end{figure}

\begin{figure} [ht]
\centering
\scalebox{.5}{\input{DMK.pstex_t}}
\caption{Composition in the Deligne-Mumford-Knudsen operad.}
\label{fig:DMK}
\end{figure}

The following result was proved by Drummond-Cole~\cite{drummond2013homotopically}. Let 
$
FLD
$ 
be the operad of framed little disks and let 
$DMK$ 
be the genus zero Deligne-Mumford-Knudsen operad with $k$-to-$1$ operations indexed by points in $\overline{\cM}_{0,k+1}$.
Let 
$FLD_{1,1}$ 
be framed little disks with one input and one output (with only a space of $1$-to-$1$ operations, which is up to homotopy the group $S^1$), and let $\pt$ be the operad with only one identity $1$-to-$1$ operation. Then, 
in any model structure
on operads with weak equivalences spanned by maps of topological operads which are levelwise weak equivalences, we have the following result.
\begin{ctheorem}[Drummond-Cole~\cite{drummond2013homotopically}]
  The homotopy colimit of the diagram $$\pt\from FLD_{1,1}\to FLD$$ is related by a canonical sequence of weak equivalences to 
  $DMK$.
\end{ctheorem}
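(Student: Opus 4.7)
The plan is to model $FLD$ by genus-zero Riemann spheres carrying one analytically parametrized output boundary circle and $k$ analytically parametrized input circles, with operadic composition given by gluing along parametrizations. In this model, $FLD_{1,1}$ is the space of analytic annuli with two parametrized boundaries, which deformation retracts onto its subgroup $S^1$ of rigid rotations of the output parametrization.

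First, I would realize the homotopy colimit $\mathrm{hocolim}(\pt \from FLD_{1,1} \to FLD)$ arity-wise as a two-sided bar construction $B(\pt, FLD_{1,1}, FLD)(k)$. Geometrically, an $n$-simplex $[\sigma\,|\,g_1,\dots,g_n]$ corresponds to a surface $\sigma\in FLD(k)$ with $n$ stacked annular collars glued onto the output circle, each carrying a rotation parameter $g_i\in S^1$. I would then construct a degeneration map to $\overline{\cM}_{0,k+1}$ by pinching each annulus to a node: as the conformal modulus of a collar tends to infinity, the annulus degenerates into a nodal pair and the two parametrizations it carried become irrelevant, since a node carries no framing data. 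An $n$-simplex is therefore sent to a chain of $n$ bubbles attached along the output, the last of which carries the output marked point; this matches exactly the natural filtration of $\overline{\cM}_{0,k+1}$ by nodal incidence at the output marked point.

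To check this map is a level-wise weak equivalence, I would argue by induction on the number of nodes. The open stratum $\cM_{0,k+1}$ is equivalent to $FLD(k)/S^1$, which agrees with the bar construction restricted to surfaces carrying no nontrivial collar. Each boundary stratum of $\overline{\cM}_{0,k+1}$ decomposes as a fiber product of lower-arity DM moduli spaces glued at a node, and the simplicial structure of the bar construction decomposes in parallel fashion, so the weak equivalence propagates through the stratification. A spectral-sequence or \v{C}ech-type comparison organized by this stratification packages the induction cleanly.

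The main obstacle will be promoting this to a weak equivalence of operads rather than a mere level-wise equivalence of spaces, i.e., verifying operadic coherence of the degeneration map. When two surfaces are composed along a parametrized circle while the output collar of the first is simultaneously degenerating to a node, the result must agree with the corresponding composition in $\overline{\cM}_{0,*}$. Ensuring this compatibility is what forces the use of the bar construction model rather than a naive quotient $FLD/FLD_{1,1}$, and it is the point at which the geometric input about nodal annuli advertised in the abstract becomes essential, since the pinching procedure must be carried out coherently across all arities and all composition maps at once.
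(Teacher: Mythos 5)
You have the right geometric raw materials (framed genus-zero surfaces, annuli pinching to nodes, the fact that a node carries no framing), but the central homotopical step contains a genuine error: the two-sided bar construction $B(\pt,\, FLD_{1,1},\, FLD)(k)$ taken with respect to the \emph{output} action of $FLD_{1,1}$ does not model the homotopy pushout of the diagram in the category of operads. It models the arity-wise homotopy pushout in the category of $\fS$-graded spaces, which is simply the homotopy quotient $FLD(k)/_h S^1$. That space is homotopy equivalent to the \emph{open} moduli space $\cM_{0,k+1}$, not to $\overline{\cM}_{0,k+1}$. The geometric picture you draw---a chain of $n$ bubbles hanging off the output marked point---only ever produces the boundary strata of $\overline{\cM}_{0,k+1}$ with a linear chain of nodes incident to the output; it misses all the strata where nodes separate arbitrary subsets of input marked points from one another. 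The induction you propose then fails immediately, because the stratification by bar-construction simplices and the stratification of $\overline{\cM}_{0,k+1}$ by dual trees are not in bijection (one is indexed by totally ordered chains, the other by rooted trees), and the boundary of $\overline{\cM}_{0,k+1}$ does \emph{not} decompose as a fiber product in the way that would match your simplicial filtration.

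The missing ingredient is that an operadic homotopy pushout is computed by first taking a cofibrant replacement \emph{in the model category of operads}, and the resolution that arises (the Berger-Moerdijk $W$-construction, or equivalently the split-surface/Humpty-Dumpty model used in the paper) is indexed by \emph{trees} of $FLD$-elements with lengths on internal edges, not by simplices of annuli stacked at a single boundary circle. Because the $S^1$ in $FLD_{1,1}$ is identified with the identity at every internal gluing locus of the free operad---not just at the root---each internal edge of such a tree can degenerate to a node. That is exactly where the full tree-shaped boundary stratification of $\overline{\cM}_{0,k+1}$ comes from. Your observation that ``ensuring this compatibility is what forces the use of the bar construction model rather than a naive quotient'' correctly senses that the naive quotient is wrong, but the fix requires a tree-shaped resolution rather than a simplex-shaped one; in the paper this is implemented geometrically by replacing $\pt$ with the contractible operad of stable nodal annuli, resolving $\widetilde{\t{Fr}}_\partial$ by its $W$-construction (equivalently, by split framed surfaces with seam-lengths), and then verifying that the seam-forgetting map to tree-like framed nodal surfaces has contractible fibers.
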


The intuition behind this theorem is the following: to trivialize the $S^1$-action in $FLD$ amounts to collapsing each small disc, as well as the boundary of the outer disc, to a point. The outcome is a genus 0 curve, i.e., an element of ${\cM}_{0,k+1}$, the uncompactified moduli space of genus $0$ curves with $k+1$ marked points. The homotopy pushout corresponds to the compactification $\overline{\cM}_{0,k+1}$: the latter involves nodal curves whose dual graph is a tree, and trees are precisely created by the operadic bar construction, which provides cofibrant replacements. 

In this paper we give a higher-genus generalization of this result.
Our method of proof gives a more geometric (and indeed motivic, as seen in~\cite{vaintrob_motivic}) interpretation of Drummond-Cole's theorem.

We define the operad 
$
\t{Fr}_\partial
$ 
of \emph{framed surfaces} with spaces of 
operations given by the moduli spaces of complex, i.e., conformal, surfaces with 
analytically parametrized boundary, and with composition given by gluing boundary components 
along compatible parametrizations. See~\S\ref{sec:framed_surfaces} and Figure~\ref{fig:split-surface}. 
The operad $FLD$ embeds in $\t{Fr}_\partial$ by viewing the boundary of the ``large'' disk in which the framed little disks embed as the outgoing boundary of a conformal surface of genus zero, and the boundaries of the interior disks as incoming boundaries. This embedding establishes a homotopy equivalence between $FLD$ and the suboperad $\t{Fr}_{\partial, g=0}$ of framed surfaces of genus zero, and so $\t{Fr}_\partial$ is a natural higher-genus generalization of $FLD$ in the category of topological operads. 

Let 
$
\mathrm{Ann}
$ 
be the suboperad of $\t{Fr}_\partial$ consisting of annuli, i.e., genus zero framed surfaces with one incoming and one outgoing boundary component. This operad is homotopy equivalent to $FLD_{1,1}$ and to $S^1$, and each annulus is a homotopy unit for $\t{Fr}_\partial$. It is convenient to enlarge $\mathrm{Ann}$ and $\t{Fr}_\partial$ to strictly unital operads 
$
\widetilde{\mathrm{Ann}}
$
and
$
\widetilde{\t{Fr}}_\partial
$ 
by including infinitely thin annuli. See~\S\ref{sec:framed_annuli}.

The main result of the present paper is the following theorem. As before, suppose we are working 
with a model structure with weak equivalences spanned by 
maps of topological operads which are levelwise weak equivalences. 
Let 
$
\t{DM}^{\t{tree}}
$ 
be the operad of ``tree-like'' nodal surfaces of arbitrary genus, whose spaces of operations are the partial compactifications of the moduli spaces $\cM_{*, *}=\{\cM_{g,k+1}\, : \, g\ge 0, \, k\ge 0\}$ of closed Riemann surfaces of genus $g$ with $k+1$ marked points by boundary components consisting of nodal curves whose dual graph is a tree. 
\begin{theorem}\label{thm:mainintro}
  The homotopy colimit of the diagram \begin{align}\label{diag:main_pushout}\pt\from \widetilde{\mathrm{Ann}}\to \widetilde{\t{Fr}}_\partial\end{align} of unital operads is related by a canonical sequence of weak equivalences to the Deligne-Mumford operad $\t{DM}^{\t{tree}}$. 
  
The same statement holds for the homotopy colimit of the diagram of nonunital operads $$\pt \from \mathrm{Ann} \to \t{Fr}_\partial.$$ 
\end{theorem}

The Deligne-Mumford operad is not an operad in topological spaces, but rather an operad in topological orbifolds, or stacks. We discuss the corresponding formalism in Appendix~\ref{sec:app_moduli}, where we call the relevant objects \emph{topological moduli problems}. 
If one is only interested in the operad as an object of a \emph{rational homotopy category} (e.g., by considering its chains over a field of characteristic zero), the orbifold structure can be ignored without changing the homotopy type and the homotopy colimit result holds on the level of coarse moduli spaces (note that in genus zero this distinction is irrelevant as there are no stabilizers). 

Let $\t{DM}_\text{coarse}^\text{tree}$ be the operad built out of the underlying coarse moduli spaces of the orbifold-valued operad $\t{DM}^\text{tree}$. Let $k$ be a field of characteristic zero. In this context, the operads of chains with coefficients in $k$ on $\t{DM}^{\t{tree}}$ and $\t{DM}_\text{coarse}^{\t{tree}}$ are equivalent, as the homology of finite groups is trivial in characteristic zero. The following corollary, motivated by mirror symmetry considerations which we discuss below, follows from Theorem~\ref{thm:mainintro} by the universal property of colimits. 
\begin{corollary} \label{cor:chains}
  Let $k$ be a field of characteristic $0$. 
  The data of an algebra over the operad of chains $C_*\big(\t{DM}^{\t{tree}}\big)$ is equivalent to the data of a dg algebra $A$ over $C_*(\widetilde{\t{Fr}}_\partial)$ together with a derived $S^1$-trivialization, i.e., a chain of quasi-isomorphisms of $C_*(S^1)$-modules $\tau:A\cong V,$ with $V$ a complex of $k$-modules carrying a trivial $S^1$-action. \qed
\end{corollary}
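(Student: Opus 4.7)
The plan is to deduce the corollary from Theorem~\ref{thm:mainintro} by applying the normalized singular chains functor $C_*(-;k)$ and then using the standard fact that algebras over a homotopy pushout of operads form the homotopy pullback of the corresponding $\infty$-categories of algebras. Applying $C_*$ to the zigzag of weak equivalences of Theorem~\ref{thm:mainintro}, and using that in characteristic zero $C_*$ descends to a symmetric monoidal functor between the relevant $\infty$-categories of operads which preserves homotopy colimits (the usual symmetric-group obstructions being absent), we obtain an equivalence of dg operads
\[
C_*\bigl(\t{DM}^{\t{tree}}\bigr) \;\simeq\; \mathrm{hocolim}\bigl(\, k \;\leftarrow\; C_*\widetilde{\mathrm{Ann}} \;\to\; C_*\widetilde{\t{Fr}}_\partial \,\bigr),
\]
where $k$ denotes the unit dg operad (the chains of $\pt$).

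Next, I would unpack algebras over this pushout. For any homotopy pushout $\cQ_1\leftarrow \cP\to \cQ_2$ of operads, an algebra over the pushout is the datum of a $\cQ_1$-algebra together with a $\cQ_2$-algebra and an equivalence of their restrictions along the two maps from $\cP$. In our case $\widetilde{\mathrm{Ann}}$ has only unary operations and is equivalent to the topological monoid $S^1$, so $C_*\widetilde{\mathrm{Ann}}$-algebras are exactly dg $C_*(S^1)$-modules. A $k$-algebra is a chain complex $V$, whose restriction to $C_*\widetilde{\mathrm{Ann}}$ via the augmentation $C_*(S^1)\to k$ is $V$ with trivial $S^1$-action; a $C_*\widetilde{\t{Fr}}_\partial$-algebra $A$ restricts through $\widetilde{\mathrm{Ann}}\to\widetilde{\t{Fr}}_\partial$ to its underlying $C_*(S^1)$-module. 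An equivalence of these in the $\infty$-category of $C_*(S^1)$-modules is precisely a zigzag of quasi-isomorphisms of $C_*(S^1)$-modules, which is exactly the datum $\tau\colon A\cong V$ in the statement.

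The main technical hurdle is the first step, namely checking that $C_*$ preserves the homotopy pushout in the chosen operadic model structure. The cleanest approach is to replace the span by a cofibrant one in a semi-model structure on unital topological operads, compute its strict pushout there, and use that $C_*$ is symmetric monoidal and preserves level-wise quasi-isomorphisms over a field of characteristic zero (so that one avoids derived symmetric powers). The formal identification of algebras over a pushout with a pullback of algebras is standard, but one should take care with the unital setting and ensure that no additional identifications arise beyond those encoded by the trivial $S^1$-action on $V$.
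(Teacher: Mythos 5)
Your proposal is correct and follows essentially the same route as the paper's Appendix~\ref{sec:chains}: apply the symmetric-monoidal, colimit-preserving chains functor to the homotopy pushout of Theorem~\ref{thm:mainintro} (with the characteristic-zero invariance of algebra categories under operadic quasi-isomorphism, \cite[Theorem~2.1]{berger_moerdijk_resolution}, doing the work your parenthetical about avoiding derived symmetric powers alludes to), then use the universal property of the pushout to identify algebras over it with the gluing datum $(A, V, \tau)$. Your unpacking of the three pieces of that gluing datum is a bit more explicit than the paper's, but the argument is the same.
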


Corollary~\ref{cor:chains} is a higher-genus generalization of a result of Drummond-Cole and Vallette~\cite[Theorem~7.8]{drummond-cole-vallette}. The derived $S^1$-trivialization is equivalent to a Hodge-to-de Rham degeneration data in the sense of \cite{drummond-cole-vallette}. The characteristic zero condition can be removed at the cost of working with $\t{DM}^\text{tree}$ instead of $\t{DM}^\text{tree}_\text{coarse}$ and considering algebras over appropriate model-theoretic replacements of the operads involved.

{\bf Motivation, history of the problem and state of the art.} 
The main motivation for Theorem~\ref{thm:mainintro} and for Drummond-Cole's theorem comes from the homological mirror symmetry conjecture of Kontsevich~\cite{kontsevichICM94}. This conjecture postulates an equivalence between, on the symplectic side, the Fukaya category, and on the complex side, the category of coherent sheaves. In contrast, the original discovery and formulation of mirror symmetry was enumerative~\cite{Candelas_et_al,Cox-Katz}, and postulated an equivalence between Gromov-Witten invariants on the symplectic side and Hodge-type numbers on the complex side. Hence the question of describing Gromov-Witten invariants, or quantum cohomology, of a closed symplectic manifold, in terms of its Fukaya category.
In recent years, this has led to a flurry of activity around so-called ``categorical enumerative invariants''~\cite{costello2009,caldararu-costello-tu-2020,caldararu-tu-2020}.

Gromov-Witten invariants and the Fukaya category are related by the so-called \emph{closed-open map}. This map induces, for sufficiently nice symplectic manifolds, 
an isomorphism between symplectic cohomology, which is a variant of Floer homology, and Hochschild cohomology of the Fukaya category, see~\cite{kontsevichICM94,ganatra,abouzaid2010}. 
This isomorphism intertwines the $S^1$-action on symplectic cohomology with the $S^1$-action on Hochschild cohomology. Both these actions are part of naturally defined BV-algebra structures, which can also be refined at chain level as algebra structures over the operad of chains on the framed little discs $FLD$. On the Floer side this was proved recently by Abouzaid-Groman-Varolgunes~\cite{abouzaid-groman-varolgunes}, and on the Hochschild side this is closely related to the famous problem known as the ``Deligne conjecture''~\cite{tamarkin,voronov,kontsevich-soibelman-deligne,mcclure-smith,berger_fresse}. 

When the symplectic manifold is closed, we infer two different structures on its quantum cohomology. On the one hand, the fixed point map identifies it with symplectic cohomology, wherefrom a BV-algebra structure with trivial $S^1$-action. On the other hand, it classically carries the structure of a HyperCom algebra. (This  involves genus 0 Gromov-Witten invariants, e.g., the operation in arity 2 corresponds to the quantum multiplication.) To explain this phenomenon, Kontsevich formulated in 2003 the conjecture that the framed little discs operad with a trivialization of the circle should be equivalent to the $DMK$ operad.

This was proved in algebraic settings by Drummond-Cole and Vallette~\cite{drummond-cole-vallette}, as well as Khoroshkin, Markarian, Shadrin~\cite{khoroshkin_markarian_shadrin}, and in a topological setting by Drummond-Cole~\cite{drummond2013homotopically}. The statement at the topological level is the strongest, since it implies the previous ones by passing to chains. See also Dotsenko, Shadrin, Vallette~\cite{dotsenko_shadrin_vallette} for the relation between this picture and the Givental group action. 

Costello~\cite{costello2005} adopts a point of view on categorical enumerative invariants which is inspired by cohomological field theory, see also Kontsevich and Manin~\cite{kontsevich-manin}. From that perspective, it becomes relevant to study analogues of Kontsevich's conjecture in higher genus. Our Main Theorem~\ref{thm:mainintro} is the first result in that direction: we prove the conjecture of Kontsevich at the topological level, i.e., in the strongest sense, for the operadic part of the higher genus Deligne-Mumford moduli spaces (one output).

Very recently Tu~\cite{tu2021} proved a homological generalization of our Main Theorem~\ref{thm:mainintro} in the context of modular operads. This implies the equality between Costello's categorical enumerative invariants of the ground field and the Gromov-Witten invariants of a point, and is an important step towards inferring enumerative mirror symmetry from homological mirror symmetry~\cite{costello2009,caldararu-costello-tu-2020,caldararu-tu-2020}. Much more in line with our topological approach, Deshmukh~\cite{deshmukh2022} proved a generalization of our Main Theorem~\ref{thm:mainintro} 
from operads to input-output properads, i.e., properads that have no operations with $0$ inputs and $0$ outputs.
The starting object in~\cite{deshmukh2022} is the properadic version of our operad of framed nodal surfaces $\t{Fr}_\partial$, which we view as confirmation of the fact that this is the correct higher genus generalization of the little 2-discs operad. 
The paper~\cite{deshmukh2022} relies on 
the full machinery of $\infty$-categories developed by Lurie~\cite{lurie-higher-algebra}. 
In contrast, we keep technicalities to a minimum. 
The geometric perspective adopted in the current paper should make it appealing to  both topologists and geometers. 

Our paper brings into the picture a number of new ideas. We define the operad $\t{Fr}_\partial$ of framed surfaces as a higher genus analogue of the operad of framed little discs. Our Main Theorem~\ref{thm:mainintro} extends the equivalence of operads proved by Drummond-Cole~\cite{drummond2013homotopically} to higher genus, and Corollary~\ref{cor:chains} extends to higher genus the algebraic formulations from~\cite{khoroshkin_markarian_shadrin,dotsenko_shadrin_vallette}. Our proof is geometric and makes use of certain explicit and canonical degenerations of Riemann surfaces. Remarkably, our use of Riemann surfaces with analytically parametrized boundary, which makes the gluing operation well-defined, has a motivic counterpart discussed by the second author in~\cite{vaintrob_motivic}. 

\noindent {\bf Sketch of the proof.} The key technique in our proof consists in replacing the diagram $$\pt\from \mathrm{Ann}\to \t{Fr}_\partial$$ by the homotopy equivalent, but much more geometrically meaningful diagram (cf. Theorem~\ref{thm:geomthm})
\begin{align}\label{geom_pushout_diag}
\mathrm{NodAnn}\from \mathrm{Ann}\to \t{Fr}_\partial.
\end{align}
Here 
$\mathrm{NodAnn}$
is the operad of (stable) \emph{nodal annuli}, with only $1\to 1$ operations consisting of a compactification of $\mathrm{Ann}$ by allowing the modulus to tend to $\infty,$ which we geometrically interpret as the annulus developing a node (disjoint from either parametrized boundary component). The resulting operad turns out to be contractible (Lemma~\ref{lem:nodann-contractible}), hence gives rise to a diagram whose homotopy colimit is equivalent to the homotopy colimit $\pt\from \mathrm{Ann}\to \t{Fr}_\partial$ of the theorem above. In fact, in this formulation the homotopy colimit result is visible geometrically, as the Geometric Pushout Theorem~\ref{thm:geomthm} from~\S\ref{sec:dendritic}. In that statement we use a unital version $\widetilde{\mathrm{NodAnn}}$ of $\mathrm{NodAnn}$, which corresponds to the partial compactification in the ``modulus zero'' limit. (This partial compactification does not change the homotopy type.)

The proof of our Main Theorem~\ref{thm:mainintro} relies in fact on a mild homotopy enhancement of the proof of the Geometric Pushout Theorem~\ref{thm:geomthm}. 

Our proof of Proposition~\ref{prop:from-protected-to-unprotected}, stating that a certain operation of erasing seams on surfaces gives rise to a weak homotopy equivalence, has counterparts both in~\cite[\S7]{drummond2013homotopically} and in~\cite[\S\S5.2-5.3]{deshmukh2022}, in different setups. 
These different perspectives complement each other, and it is instructive to compare them, including from a complexity perspective.

{\bf Topological moduli problems and operads.} 
An extra layer of technical complexity is added to our work because the ``target'' of our comparison, the operad $\t{DM}^\t{tree}$, is not a topological operad. Instead, it is an operad valued in topological moduli problems, because certain stable marked complex curves have automorphisms that preserve the markings. The notion of a topological moduli problem, which we discuss in Appendix~\ref{sec:app_moduli}, is a variant of the notion of topological stack that is suitable for our purposes. We thus have to take care of two issues.
\begin{enumerate}
\item\label{Q:stack_to_top} How does one compare topological moduli problems to topological spaces?

\item\label{Q:stacky_op} What is an operad in topological moduli problems?

\end{enumerate}
We discuss the first question in Appendix~\ref{sec:app_moduli}, and the second question in Appendix~\ref{sec:dendroidal}. The outcome is that every topological moduli problem $\X$ has a ``classifying space'' and can thus be treated essentially as a topological space. 
The answer to the second question requires more formalism than the first, and the model we use is that of Segal operads, defined in \cite{cisinski-moerdijk-dendroidal-sets}. 
In particular, the Deligne-Mumford operad $\t{DM}^\t{tree}$ is a Segal operad rather than an ordinary operad. 
Luckily, the two questions can be treated separately for the purposes of this paper, and neither of them needs to interfere with the model category structure.

Before comparing $\t{DM}^\t{tree}$ with the homotopy pushout, we need to replace it by a model that allows boundary. To this end, we define in \S\ref{sec:dendritic} an operad $\t{NodFr}_\partial^\t{tree}$ of classifying spaces of nodal curves with parametrized boundary (again, an operad of topological moduli problems defined using the same framework as $\t{DM}^\t{tree}$). This operad contains both $\t{DM}^\t{tree}$ as a closed sub-operad and $\t{Fr}_\partial$ as an open suboperad, and moreover the map of operads $\t{DM}^\t{tree}\to \t{NodFr}_\partial^\t{tree}$ is a homotopy equivalence (after taking classifying spaces).

We use the homotopy equivalence $$\t{DM}^\t{tree}\xrightarrow{\funnel} \t{NodFr}_\partial^\t{tree}$$ 
(which we call the ``funnel'' map, as it is geometrically represented by attaching ``funnels", see Figure~\ref{fig:fr-genus0}) as one half of a ``roof'' of equivalences between $\t{DM}^\t{tree}$ and the homotopy pushout. Indeed, after taking canonical resolutions in an appropriate model category, we identify the pushout operad in~\eqref{geom_pushout_diag} with a certain space $\t{NodHD}^\t{tree}_{\t{protected}}$ of decorated curves which we call ``Humpty-Dumpty curves'', fitting into a diagram
\begin{align}\label{eq:main_sequence} \t{hocolim}(\text{diagram (\ref{geom_pushout_diag})})\cong \t{NodHD}^\t{tree}_{\t{protected}}\to \t{NodFr}_\partial^\t{tree}\from \t{DM}^\t{tree}
  \end{align}
all of whose maps are equivalences of operads in an appropriate model category.

\noindent {\bf Model category structures.}
We use the Berger-Moerdijk model category structure on topological operads throughout most of the paper, induced by a given model category structure on topological spaces. Note that the topological spaces we work with are not CW complexes. This makes it inconvenient to use the standard (Quillen) model category structure on the category of topological spaces, and we replace it by the so-called \emph{mixed model category structure} due to Cole~\cite{cole}. 
All spaces we work with are homotopy equivalent (and not just weak homotopy equivalent) to CW complexes, which implies that our results will also hold in the Str\o m model category structure \cite{strom}, where only homotopy equivalences are inverted. 
This is explained in~\S\ref{sec:model_category_operads}. 

{\bf Structure of paper.} Taking advantage of the analogous nature of the proofs of the homotopical Main Theorem~\ref{thm:mainintro} and of the Geometric Pushout Theorem~\ref{thm:geomthm}, we first give in~\S\ref{sec:operads} and~\S\ref{sec:operads_from_surfaces} a self-contained statement and proof of Theorem~\ref{thm:geomthm}, along with a brief 
introduction to topological operads and their pushouts. A reader interested in the flavor of our proof without the topological technicalities can read those sections only. In~\S\ref{sec:models}, \S\ref{sec:model_category_operads} we introduce the formalism of model categories and the Berger-Moerdijk model category structure on topological operads, which we will be working with. We prove the Main Theorem~\ref{thm:mainintro} in~\S\ref{sec:proof_main}. For the reader's convenience we have given separately the proofs for the genus $0$ case and for the higher genus case. In the genus $0$ case we only work with operads in topological spaces and there are not stacky phenomena involved, and we recover by a different, and perhaps more geometric method, the theorem of Drummond-Cole~\cite{drummond2013homotopically}. In the higher genus case we make full use of the language of topological moduli problems and Segal operads, which we introduce and discuss in Appendix~\ref{sec:app_moduli}, 
respectively Appendix~\ref{sec:dendroidal}.

{\bf Acknowledgements.} This paper got started in 2017 when both authors were members of the Institute for Advanced Study in Princeton within the special year on Mirror symmetry. We would both like to acknowledge the inspirational role played by the seminar on Hodge theory organized by P.~Seidel, where a talk by the first author initiated this collaboration. We have had fruitful discussions with M.~Abouzaid, V.~Dotsenko and E.~Getzler. The mixed model category structure was pointed out to us by A.~Lahtinen and S.~Schwede. The first author acknowledges financial support via the ANR grants ENUMGEOM ANR-18-CE40-0009 and COSY ANR-21-CE40-0002, as well as a Fellowship of the University of Strasbourg Institute for Advanced Study (USIAS). The second author acknowledges hospitality of the Institut de Math\'ematiques de Jussieu-Paris Rive Gauche (IMJ-PRG, Paris) and Institut de Recherche Math\'ematique Avanc\'ee (IRMA, Strasbourg). Finally, we would like to thank the anonymous referee and the Editors for their thoughtful suggestions and for their patience.

\section{Operads and topology}  \label{sec:operads}

\subsection{A brief reminder on operads}   \label{sec:brief_reminder_operads}
Operads were initially defined by May~\cite{May} in a topological context. 

An \emph{operad $O$} is a structure that specifies a class of composable operations with multiple inputs. An operad in {\sl sets} is a collection of sets $O_n$, $n\ge 0$ of operations ``with $n$ inputs and one output'', or operations ``of arity $n$'', together with {\sl composition rules} 
$$\gamma :O_k\times O_{n_1}\times O_{n_2}\times \dots\times O_{n_k}\to O_{n_1+\dots + n_k}$$
and \emph{permutation rules}, consisting of \emph{right} actions of the symmetric groups $\fS_n$ on $O_n$, $n\ge 0$, where $\fS_0=1$ by convention,
$$
O_n\times\fS_n\to O_n, \qquad (o,\sigma)\mapsto o\sigma.
$$
The composition rules and the permutation rules are required to satisfy certain tautological relations which essentially encode the fact that they behave like composition and permutation of inputs. Generally, operads are also required to have a \emph{unit}, $1\in O_1,$ with the property that composing $o\in O_k$, $k\ge 1$ by $1$ on the left or with the tuple $(1, 1, \dots, 1)$ on the right does not change $o$. By default, when we use the word ``operad'' we will mean \emph{unital operad}. 

A \emph{representation} of an operad $O$ (in sets), or an \emph{algebra over $O$}, is a set $(S,\rho)$ with a collection of maps $\rho_o:S^n\to S$ indexed by $o\in O_n$, $n\ge 0$. By convention $S^0$ consists of a single point and therefore we interpret the collection of maps $\rho_o$, $o\in O_0$ as a distinguished collection of elements in $S$. The case in which $O_0$ consists of a single element is historically important, see May~\cite{May}, but the operads that we will construct in this paper will have naturally richer spaces $O_0$. 
The collections of maps $\rho_o$, $o\in O_n$, $n\ge 1$ are interpreted as spaces of operations with $n$ inputs and one output in $S$. We require these maps to satisfy the permutation rule 
$$
\rho_{o\sigma}(s_1, \dots, s_n) =  \rho_o(s_{\sigma(1)}, \dots, s_{\sigma(n)})
$$ 
for $\sigma\in \fS_n$ a permutation, and also the associativity rule
$$
\rho_o\circ \big(\rho_{o_1}\times\dots\times \rho_{o_k}\big)=\rho_{\gamma(o, (o_1, \dots, o_k))} :S^{n_1}\times\dots\times S^{n_k}\to S.
$$

Note that the only property needed in order to define operads and algebras over operads in this context is that the category $\t{Set}$ has a symmetric monoidal structure with respect to the cartesian product and the permutation action $\fS_n\times S^n\to S^n$, $\sigma(s_1,\dots,s_n)=(s_{\sigma(1)},\dots,s_{\sigma(n)})$. In particular, we can define the notion of an operad and of an algebra over an operad in any symmetric monoidal category $(\mathcal{C}, \otimes)$ with choice of unit object. The category of operads in $\mathcal{C}$ is denoted $\t{Op}_{\mathcal{C}}$.   

For convenience, we shall impose a slightly stronger condition: namely, that the symmetric monoidal category $\cC$ we work with be \emph{closed} (see~\cite[\S2]{berger_moerdijk_model}), which in particular implies it has (small) colimits, the colimits distribute over pushouts and there is an internal Hom functor. 
The cases of most interest to us are the categories $\Top$ of 
compactly generated weakly Hausdorff topological spaces (this is the standard category used in homotopy theory~\cite[Definition~2.4.21, Theorem~2.4.25]{hovey}), 
${\text{Vect}}$ of vector spaces and ${\text{Vect}}_\t{dg}$ of differential graded vector spaces. Given a lax symmetric monoidal functor $\mathcal{C}\to \mathcal{D}$, we get a functor of associated operad categories $\t{Op}_{\mathcal{C}}\to \t{Op}_{\mathcal{D}}$. 

Operads can be equivalently defined by specifying a smaller set of composition rules, the so-called \emph{partial compositions}. More precisely, given a unital operad $O$ one defines the partial compositions 
$$
- \circ_i - : O_k\times O_\ell \to O_{k+\ell-1},\qquad 1\le i\le k
$$
as $u\circ_i v = \gamma(u;1,\dots,1,v,1,\dots,1)$ for $1\le i\le k$. These obey the tautological relations $u\circ_i (v\circ_j w) = (u\circ_i v)\circ _{i-1+j}w$ for $i,j\ge 1$ and $(u\circ_j w)\circ_i v = (u\circ_i v)\circ_{j-1+\ell}w$ for $j>i\ge 1$ and $v\in O_\ell$, called respectively \emph{sequential composition} and \emph{parallel composition}. These relations determine uniquely all the other composition and permutation rules for the operad $O$, allowing for an equivalent definition of the operad structure. 

\begin{remark}
When we work with the Deligne-Mumford operad and its variations, we will need the slightly more sophisticated theory of Segal operads, which is adapted to handle stacky objects, i.e., objects with self-symmetries. This refinement is explained in Appendix~\ref{sec:dendroidal}.
  \end{remark}

\subsection{Free operad} \label{sec:free_operad} This section is based on~\cite[\S5.8]{berger_moerdijk_model} and~\cite[\S3]{berger_moerdijk_bv}.

Many constructions in algebra canonically output graded objects, i.e., objects of the form $\sqcup X_i$ for $\sqcup$ the coproduct operation ($\oplus$ for vector spaces) and 
$i$ running over some indexing set $I$. 
For example the free unital monoid on a set $\Gamma$ (resp., a vector space $V$) is $\t{Free}(\Gamma) = \bigsqcup_{n\in \N} \Gamma^n$ (resp., the tensor algebra $\t{Free}(V) = \bigoplus_{n\in \N}V^{\otimes n}$), indexed by 
$\N$.  Note that the monoid structure on $\t{Free}(\Gamma)$ ``lives over'' the standard additive monoid structure on $\N$. 
It is a common procedure to resolve
a monoid, or an associative algebra, by free ones using a simplicial object (or chain complex) based on the free algebra construction of the bar complex. The analogue of the bar complex in the theory of operads is indexed not by the monoid of natural numbers but by the operad of trees, whose $n\to 1$ operations are given by certain trees with $n$ distinguished ``input edges'' and one distinguished ``output edge''. 
Some trees have automorphisms, which interact with the $\fS_n$-actions on spaces of operations, so properly speaking the free construction is indexed by 
a \emph{groupoid} of trees. 

Let $\mathcal{C}$ be any symmetric monoidal category. The \emph{symmetric groupoid} is the category $\fS$ with objects the finite sets $[n]=\{1,\dots,n\}$, $n\ge 0$ and morphisms the permutations of $[n]$. We define the category of \emph{$\fS$-collections} in $\mathcal{C}$ as $\t{Fun}(\fS^{op},\mathcal{C})$, also denoted $\fS-\t{Mod}_{\mathcal{C}}$. Explicitly, the objects of $\fS-\t{Mod}_{\mathcal{C}}$ are sequences $X_* : = (X_0,X_1, X_2, \dots)$ with $X_n \in \mathcal{C}$ a $\fS_n$-module for $n\ge 0$, and the morphisms are equivariant sequences of morphisms in $\mathcal{C}$.

Let $\t{Set}_f$ be the category with objects the finite sets and morphisms the bijections between finite sets. The categories $\fS$ and $\t{Set}_f$ are canonically equivalent. As a consequence, the category of $\fS$-collections $\t{Fun}({\fS}^{op},\mathcal{C})$ is canonically isomorphic to the category $\t{Fun}(\t{Set}_f^{op},\mathcal{C})$~\cite[Proposition~1.51]{MSS}. When viewing a $\fS$-collection $X_*$ as a functor $\t{Set}_f^{op}\to\mathcal{C}$, we denote $X_F$ the object in $\mathcal{C}$ that is associated to a finite set $F$.

We have a canonical forgetful functor 
$$
\t{forg}: \t{Op}_{\mathcal{C}}\to \fS-\t{Mod}_{\mathcal{C}}
$$ 
which associates to an operad $O$ the sequence $(O_0,O_1,O_2,\dots)$ of its spaces of operations. This functor has a left adjoint
$$
\t{Free} : \fS-\t{Mod}_{\mathcal{C}}\to \t{Op}_{\mathcal{C}}
$$ 
called the \emph{free operad functor}. The adjunction relation reads 
$$
\t{Hom}_{\t{Op}_{\mathcal{C}}}(\t{Free}(X_*), O) \cong \t{Hom}_{\fS-\t{Mod}_{\mathcal{C}}}(X_*,\t{forg}(O)).
$$

\subsubsection{The operad of labeled rooted trees} \label{sec:operad_of_labeled_rooted_trees} We first need to describe an important operad based on the following heuristic idea: an operation with $n$ inputs is represented by a rooted tree with $n$ distinguished leaves labeled by the set $\{1,\dots,n\}$, \emph{up to isomorphism}. Composition of operations is represented by grafting such trees one upon another. 

A {\sl graph with half-edges} is a graph $\Gamma$ with a set of vertices $\t{Vert}_\Gamma$, a set of oriented edges $\t{Edge}_\Gamma$ each having one tail and one head vertex, and an additional set of oriented half-edges $\t{Half}_\Gamma$ with only one end (either head or tail). We denote $\t{Half}^+_\Gamma$ the set of incoming half-edges. We denote $\bar \Gamma$ the oriented graph of full edges. 
We say that a 
graph with half-edges $\Gamma$ is a \emph{tree of operations} if 
$\bar\Gamma$ is a 
rooted tree and $\Gamma$ has exactly one outgoing half-edge which is attached to the root of $\bar \Gamma$. 
This definition allows for an arbitrary number (including $0$) of incoming half-edges for $\Gamma$, it allows for some (or all) of the leaves of $\bar\Gamma$ to have no incoming half-edge attached to them, and it allows for the incoming half-edges of $\Gamma$ to be attached at any vertex of $\bar\Gamma$. Each interior vertex of $\bar\Gamma$ has a unique outgoing edge attached to it.  See Figure~\ref{fig:tree}.
In addition to the above, we also introduce the \emph{trivial tree} $|$ consisting of a unique edge and no vertex. We do not consider our trees of operations as being endowed with a planar structure.  

\begin{figure} [ht]
\centering
\input{tree.pstex_t}
\caption{A tree of operations $\Gamma$ and its associated graph of full edges $\bar\Gamma$.}
\label{fig:tree}
\end{figure}

A \emph{labeling} of a tree of operations $\tau$ with $n\ge 1$ incoming half-edges is the data of a bijection 
$\lambda:\{1,\dots,n\} \stackrel\sim\longrightarrow \t{Half}^+_\tau$, which we view as assigning an element in $\{1,\dots,n\}$ to each incoming half-edge. A \emph{labeled tree} of operations is a pair $(\tau,\lambda)$ consisting of a tree of operations $\tau$ and a labeling $\lambda$.
Two labeled trees $(\tau,\lambda)$ and $(\tau',\lambda')$ are \emph{equivalent} if there exists an 
isomorphism $\phi:\tau\stackrel\sim\longrightarrow\tau'$ 
that intertwines the labelings, i.e., such that $\lambda'=\phi\lambda$. Write 
\[
\t{Tree}_n
\] 
for the set of all labeled trees of operations with $n\ge 0$ incoming half-edges, and write 
$$
Tree_n
$$ 
for the equivalence classes under the above equivalence relation. This is a $\fS_n$-equivariant groupoid with respect to the right action of $\fS_n$ on labelings.

The $\fS_n$-groupoids ${Tree}_n$, $n\ge 0$ form an operad in the following way:
\begin{itemize}
\item Given an equivalence class of a labeled tree $[\tau,\lambda] \in {Tree}_k$ and a collection $[\tau_i,\lambda_i]\in {Tree}_{n_i}$, $1\le i\le k$ we define the composition $\gamma([\tau,\lambda], ([\tau_1,\lambda_1], \dots, [\tau_k,\lambda_k]))$ as follows. We choose representatives for each of the previous equivalence classes, we build a tree $T$ by gluing for each $i\in\{1,\dots,k\}$ the outgoing half-edge of $\tau_i$ to the $i$-th incoming half-edge of $\tau$ as distinguished by the labeling $\lambda$, producing thus for each $i$ a new interior edge whose tail vertex is the root of $\tau_i$ and whose head vertex is the same as that of the $i$-th incoming edge of $\tau$. (If $\tau_i$ is the trivial tree, the gluing is innocuous.) We define a labeling $\ell$ of the tree $T$ by concatenating the labelings $\lambda_1,\dots,\lambda_k$. The result of the composition is the equivalence class of the labeled tree $(T,\ell)$.
\item The unit is provided by the trivial tree $|$ with its unique labeling. 
\end{itemize}
The resulting operad, denoted 
$
Tree,
$ 
is \emph{the operad of labeled rooted trees}. 

\begin{remark}
We refer to~\cite{kock2011} for a different description of trees. 
\end{remark}

\subsubsection{The free operad functor} \label{sec:free_operad_functor} Let $X_*$ be a $\fS$-collection, which we view as a functor $\t{Set}_f^{op}\to\mathcal{C}$. The heuristic idea for the construction of the free operad $\t{Free}(X_*)$ is the following: the space of operations in arity $n\ge 0$ consists of elements of $Tree_n$, decorated at each vertex $v$ by an element of 
$X_{\t{in}(v)}$, where $\t{in}(v)$ is the set of incoming edges and half-edges at $v$.
The composition of operations is inherited from the composition of trees. 

Given a tree of operations $\tau$, define
$$
X^\tau = \prod_{v\in \text{Vert}_\tau} X_{\t{in}(v)}.
$$ 
A \emph{labeled rooted tree with vertices colored by elements of $X_*$ and with $n\ge 0$ incoming half-edges} is a triple $(\tau,\mathbf{x},\lambda)$ with $(\tau,\lambda)\in\t{Tree}_n$ and $\mathbf{x}\in X^\tau$. Two such triples $(\tau,\mathbf{x},\lambda)$ and $(\tau',\mathbf{x'},\lambda')$ are \emph{equivalent} if
there exists an 
isomorphism $\phi:(\tau,\lambda)\stackrel\sim\longrightarrow(\tau',\lambda')$ such that 
the colors $\mathbf{x}=(x_v)_{v\in\t{Vert}_\tau}$ and $\mathbf{x'}=(x'_w)_{w\in\t{Vert}_{\tau'}}$ satisfy the condition $x'_{\phi(v)}=x_v\sigma_\phi$, 
where $\sigma_\phi:X_{\t{in}(v)}\to X_{\t{in}(\phi(v))}$ is the isomorphism determined by the bijection $\phi:\t{in}(v)\to\t{in}(\phi(v))$. Let 
$$
\t{Tree}_n(X_*)
$$ 
be the space of labeled rooted trees with vertices colored by elements of $X_*$ and with $n\ge 0$ incoming half-edges. This carries a natural topology and splits as a disjoint union of topological spaces indexed by the elements of $\t{Tree}_n$. Let 
$$
Tree_n(X_*)
$$ 
be the space of equivalence classes under the above equivalence relation, which again carries a natural topology and splits as a disjoint union of topological spaces indexed by the elements of $Tree_n$. This is naturally a $\fS_n$-space under the action of the permutation group on labelings. 

\begin{definition} The spaces of operations in the \emph{free operad} $\t{Free}(X_*)$ are 
$$
\t{Free}(X_*)_n = Tree_n(X_*),\qquad n\ge 0. 
$$
These form a topological operad with compositions, unit, and $\fS$-structure inherited from the operad $Tree$. 
\end{definition}

\subsection{Pushout of operads} \label{sec:pushout_operads}
Suppose that 
$$
P\leftarrow A\to Q
$$ 
is a diagram of topological operads. We define the \emph{amalgamated product}, or \emph{pushout} 
$$
P \sqcup_A Q
$$
to be the colimit of the diagram in topological operads. Explicitly, this is a quotient (interpreted as a colimit) of the free operad $\t{Free}(P_*\sqcup Q_*)$ and is defined as follows. Consider the counit of the free-forgetful adjunction: this is the natural transformation between the functors $\t{Free}\circ\t{forg}$ and $\mathrm{Id}_{\t{Op}}$ which associates to each operad $O$ the ``product'' morphism of operads $\prod:\t{Free}(O_*)\to O$ obtained by applying composition maps in $O$ to a tree of elements in $O$ recursively until the tree has a single vertex (this is independent of the order by the associativity of operations in operads). 
Now $P\sqcup_A Q$ is the quotient of $\t{Free}(P_*\sqcup Q_*)$ by the equivalence relation generated by the relations 
$$
\sim_1\sqcup \sim_2
$$
described as follows.
\begin{itemize}\label{poutreln}
\item ($\sim_1$) If $o_{free}\in \t{Free}(P_*\sqcup Q_*)$ is a free element over a tree $\tau$ and
  $\tau$ has a sub-tree $\tau_0$ all of whose vertices are labeled by
  elements of $P$ (resp., of $Q$) then $o_{free}$ is equivalent to
  $o_{free}'$ with all vertices and all full edges of $\tau_0$
  contracted to a point, and with the product $\prod(o_{free}|_{\tau_0})$
  written at that point.
\item ($\sim_2$) Denote the two operad maps $i:A\to P$ and $j:A\to Q$. If $o_{free}\in \t{Free}(P_*\sqcup Q_*)$ is a free element over a tree $\tau$ which on some vertex $v\in \tau$ has a label which is equal to $i(a)$ for some $a\in A$, we set $o_{free}\sim o'_{free}$ where $o'_{free}$ has the label on $v$ replaced by $j(a)$. 
\end{itemize}
The amalgamated product can be defined more generally for operads in categories which do not live over the category of sets. The  above relations should then be understood as coequalizer conditions in the underlying category.

\section{Operads based on Riemann surfaces with boundary}  \label{sec:operads_from_surfaces}
\subsection{The operad of framed surfaces}  \label{sec:framed_surfaces}

\begin{definition}
A \emph{framed surface} is a compact Riemann surface $\Sigma$ with boundary $\p \Sigma$ locally analytically modelled on the upper half plane $\{z\in\mathbb{C}\, :  \, \Im z\ge 0\}$, together with an analytic parametrization $\varphi_i:S^1\to C_i$ for each boundary component $C_i\subset \p \Sigma$. 

A component $C_i\subset \p \Sigma$ is called an \emph{input} or an \emph{output} if the orientation induced by the parametrization coincides, respectively is opposite to the boundary orientation of $C_i$.  

Write $\t{Fr}_\partial^{m,n}$ for the moduli space of framed surfaces with $m$ incoming and $n$ outgoing boundary components.
\end{definition}

The space of oriented analytic diffeomorphisms $S^1\to S^1$ which preserve a basepoint $1\in S^1$ is contractible. Indeed, this set is identified with the space of analytic functions $f:\R\to \R$ satisfying the conditions $f(0)=0$, $f(x+1)=f(x)+1$ for all $x\in\R$, and $f'>0$, which is convex. (The function $f(x)=x$ can be taken as a basepoint.) As such, once an orientation of each boundary component has been specified (which is the same as a labelling of the components as inputs or outputs), an analytic parametrization $\varphi_i:S^1\to C_i$ is determined up to homotopy by the choice of a basepoint $p_i=\varphi_i(1)\in C_i$. 

Framed surfaces can be glued at inputs and outputs because of the following phenomenon. 

A framed surface is canonically isomorphic in the neighborhood of each of its boundary components to a closed annulus 
$$
A_\epsilon = \{z\in\cc \, : \, 1-\epsilon \le |z| \le 1\}
$$ 
for some $\epsilon>0$. Indeed, given a component $C_i$ with an analytic parametrization $\varphi_i$, the latter locally extends uniquely, and these local extensions coincide on the overlaps by uniqueness of holomorphic continuation. The original parametrization $\varphi_i$ corresponds then to the restriction of the extended parametrization to the circle $\{|z|=1\}$ if $C_i$ is an output, respectively to the restriction to the circle $\{|z|=1-\epsilon\}$ if $C_i$ is an input. As a consequence, any two framed surfaces are uniquely locally isomorphic in the neighborhood of any of their incoming, respectively outgoing boundary components.

Given two annuli $A_{\epsilon}, A_{\epsilon'}$ (viewed as complex manifolds with canonically parametrized boundary) the incoming boundary of the first can be glued to the outgoing boundary of the second (to produce an annulus with modulus $\ln 1/(1-\epsilon) + \ln 1/(1-\epsilon')$, cf.~\S\ref{sec:framed_annuli} below). Since every framed surface is isomorphic in a neighborhood of each of its boundary components to such an annulus, this gives us the local data necessary for gluing two framed surfaces along boundary components of opposite orientation, $$(\Sigma, \gamma), (\Sigma, \gamma')\mapsto \Sigma\sharp_{\gamma, \gamma'}\Sigma'.$$ Note that this also makes sense if $\Sigma, \Sigma'$ are disconnected and also if $\gamma, \gamma'$ are boundary components consisting of multiple circles, as long as the orientations are compatible.

In particular, the moduli spaces $\t{Fr}_\partial^{m,n}$ form a topological PROP, and the moduli spaces $\t{Fr}_\partial^{m, 1}$ with one output form a topological operad. 
We denote this latter topological operad by 
$
\t{Fr}_\partial.
$
We call it \emph{the operad of framed surfaces}. 

Note that the moduli space $\t{Fr}_\partial^{m,n}$ is \emph{a priori} a stacky object, as a surface can have automorphisms. However, this can only happen when both $m$ and $n$ are equal to zero, as no nontrivial automorphism of a connected complex surface can fix an embedded curve or boundary component pointwise. Since we will only be interested in the operad $\t{Fr}_\partial$, which involves the moduli spaces $\t{Fr}_\partial^{m,n}$ with $n = 1$, we will never encounter any stacky phenomena involving framed surfaces. 

\begin{remark} \label{rmk:labelings} It is understood here that the elements of $\t{Fr}_\partial$ are \emph{labeled} framed Riemann surfaces, meaning that, for each framed Riemann surface $\Sigma\in\t{Fr}_\partial^{m,1}$, we are given a bijection $\lambda$ between $\{1,\dots,m\}$ and the set of incoming boundary components of $\Sigma$. The bijection $\lambda$ is called a \emph{labeling}, and there are of course $m!$ choices of labelings. The labeling is necessary in order to define composition by gluing and hence the operad structure on $\t{Fr}_\partial$. This additional presence of labelings is standard for operads constructed out of Riemann surfaces, similarly to the case of the Deligne-Mumford spaces $\overline{\mathcal{M}}_{g,n}$ where the $n$ marked points are also labeled. The symmetric group $\fS_m$ acts on the right on the set of labelings of a framed Riemann surface $\Sigma$ by composition at the source $(\lambda,\sigma)\mapsto \lambda\sigma$, $\sigma\in \fS_m$. For readability we will henceforth not mention explicitly the labelings of surfaces, but whenever we will write ``framed surface'' we will mean ``labeled framed surface''.  
\end{remark}

\begin{remark}\label{rmk:topology_on_fr}
  We will be interested in $\t{Fr}_\partial$ as a topological operad and 
  we now specify the topology on the moduli spaces involved. 
   Given any point of $\t{Fr}_\partial^{m,1}$ corresponding to a surface $S$, we can glue in disks (with standard parametrization of the boundary) to all the inputs and outputs of $S$ to obtain a closed Riemann surface $\bar{S}$. This gives an identification of $\t{Fr}_\partial^{m, 1}$ with the moduli space of Riemann surfaces with $m+1$ parametrized loops bounding disks isomorphic to the standard disk $D\subset \mathbb{C}$ and with standard boundary parametrization. In particular, $\t{Fr}_\partial$ is a subspace of the space of tuples $(X, \gamma_1,\dots, \gamma_{m+1})$ with $X$ a closed Riemann surface (corresponding to a point of some $\mathcal{M}_{g,m+1}$) and the $\gamma_i$, $i=1,\dots,m+1$ pairwise nonintersecting contractible embedded analytic loops in $X$. This is a bundle over $\mathcal{M}_{g,m+1}$. 
   We topologize $\t{Fr}_\partial^{m,1}$ as a locally closed subset of this bundle of tuples.
   
This presents $\t{Fr}_\partial$ as a complex infinite-dimensional manifold. Its local model at a framed Riemann surface of genus $g$ with $m+1$ boundary components is the total space of a fibration over a neighborhood of the corresponding element in $\mathcal{M}_{g,m+1}$ with fiber given by $m+1$-tuples of embeddings of the disc in $\mathbb{C}$ close to the standard one. The fact that the corresponding element in $\mathcal{M}_{g,m+1}$ may be an orbifold point is irrelevant here. 
   
\end{remark}

\subsection{The monoid of framed annuli} \label{sec:framed_annuli} The genus $0$ and arity $1$ part of $\t{Fr}_\partial$ forms a topological monoid which we denote 
$
\t{Ann}
$ 
and call \emph{the monoid of framed annuli}. 

A \emph{framed annulus} is a genus $0$ Riemann surface $A$ with two boundary components $\p A=\p^+A \sqcup \p^-A$ labeled as input and output, together with analytic parametrizations $f_+$ of the input $\p^+A$ and $f_-$ of the output $\p^-A$. Ignoring the parametrizations of the boundary components, such an annulus is conformally determined by its \emph{modulus} $\alpha\in (0,\infty)$ (Schottky's theorem, \cite{schottky}). This is the logarithm of the ratio of the radii 
$$
\alpha=\ln R/r
$$ of a \emph{standard annulus} $A_{R,r}=\{z\in\mathbb{C}\, : \, r\le |z|\le R\}$, $r<R$ which is conformally equivalent to $A$, where the outer circle $|z|=R$ is labeled as input and the inner circle $|z|=r$ is labeled as output. The group of conformal automorphisms of the underlying Riemann surface $A$ is canonically isomorphic to $S^1$: up to replacing $A$ with a conformally equivalent standard annulus, its group of automorphisms is represented by the rotations of $\mathbb{C}$ which fix the origin. As such, the pair $(f_-,f_+)$ is considered modulo global rotations $\theta\cdot(f_-,f_+)=(\theta + f_-,\theta+f_+)$, $\theta\in S^1$. With this understood, we write $[(A,f_-,f_+,\alpha)]$ for the equivalence class of a framed annulus $(A,f_-,f_+,\alpha)$. 

{\bf Remark.} The modulus behaves additively under gluing of standard annuli. However, it \emph{does not} behave additively under gluing of general framed annuli. This can be seen explicitly by studying configurations of nested circles in $\mathbb{C}$.

The topological monoid $\t{Ann}$ is not unital. In order to achieve unitality, it is convenient to enlarge it to the topological monoid of \emph{possibly degenerate framed annuli}, denoted 
$
\widetilde{\t{Ann}},
$ 
by including the \emph{moduli space of framed annuli of modulus $0$}, denoted 
$
{\t{Ann}}^0.
$ 

A \emph{framed annulus of modulus $0$} is a triple $(C,f_-,f_+)$ consisting of a connected closed analytic $1$-dimensional manifold $C$ together with analytic diffeomorphisms $f_\pm:S^1\to C$. We will also refer to $(C,f_-,f_+)$ as being a \emph{framed annulus of thickness zero}, or as being a \emph{degenerate framed annulus}. Two such framed annuli $(C,f_-,f_+)$ and $(D,g_-,g_+)$ are \emph{equivalent} if there exists an analytic diffeomorphism $\psi:C\to D$ such that $g_\pm=\psi f_\pm$. As such, the framed annulus $(C,f_-,f_+)$ is equivalent to $(S^1,\mathrm{id},f_-^{-1}f_+)$ and also to $(S^1,f_+^{-1}f_-,\mathrm{id})$. We choose the first expression to realize a bijection 
$$
{\t{Ann}}^0\stackrel\sim\longrightarrow \t{Aut}(S^1),\qquad [(C,f_-,f_+)]\mapsto f_-^{-1}f_+.
$$
The composition of the equivalence classes of two framed annuli of modulus $0$ is defined by
$$
[(C,f_-,f_+)]\circ [(D,g_-,g_+)]= [(C,f_-,f_+g_-^{-1}g_+)] = [(D,g_-f_+^{-1}f_-,g_+)].
$$
This makes ${\t{Ann}}^0$ into a group. The neutral element is the class $[(S^1,\mathrm{id},\mathrm{id})]$, consisting of degenerate annuli $(C,f_-,f_+)$ with $f_-=f_+$. The inverse of $[(C,f_-,f_+)]$ is $[(C,f_+,f_-)]$. As such the above bijection 
$$
{\t{Ann}}^0\stackrel\sim\longrightarrow \t{Aut}(S^1)
$$
is a group isomorphism. (Had we chosen to associate to the class of an annulus $[(C,f_-,f_+)]$ the element $f_+^{-1}f_-\in\mathrm{Aut}(S^1)$, suggested by choosing as a representative the degenerate annulus $(S^1,f_+^{-1}f_-,\mathrm{id})$, we would have obtained a bijective group anti-homomorphism.)

The topological monoid $\t{Ann}$ is a trivial fiber bundle over $(0,\infty)$, which is the space of moduli of unframed annuli, with fiber $\t{Aut}(S^1)\times_{S^1}\t{Aut}(S^1)$, where $\t{Aut}(S^1)$ stands for the group of analytic automorphisms of the circle and $S^1$ acts diagonally on $\t{Aut}(S^1)\times\t{Aut}(S^1)$ by translations in the target. We topologize $\widetilde{\t{Ann}}$ by extending this trivial fiber bundle to a trivial fiber bundle over $[0,\infty)$ and collapsing the fiber at $0$ via the diagonal action of $\t{Aut}(S^1)$ given by $\varphi\cdot(f_-,f_+)=(\varphi f_-,\varphi f_+)$. We identify the quotient with $\t{Aut}(S^1)$ via $(f_-,f_+)\mapsto f_-^{-1} f_+$ as above. 

We extend the monoid structure from $\t{Ann}$ to $\widetilde{\t{Ann}}$ as described above for two elements in ${\t{Ann}}^0$ and by defining 
$$
[(A,f_-,f_+,\alpha)]\circ [(C,g_-,g_+)]= [(A,f_-,f_+g_-^{-1}g_+,\alpha)],
$$
and 
$$
[(D,h_-,h_+)]\circ [(A,f_-,f_+,\alpha)]= [(A,f_-h_+^{-1}h_-,f_+,\alpha)]
$$ 
for $[(A,f_-,f_+,\alpha)]\in \t{Ann}$ and $[(C,g_-,g_+)], [(D,h_-,h_+)]\in {\t{Ann}}^0$. 

We claim that this monoid structure is compatible with the above topology, i.e., $\widetilde{\t{Ann}}$ is a topological monoid. To prove the claim, let us consider sequences $[(A^\nu,f_-^\nu,f_+^\nu,\alpha^\nu)]$ and $[(B^\nu,g_-^\nu,g_+^\nu,\beta^\nu)]$, $\nu\ge 1$ with $\alpha^\nu,\beta^\nu>1$, and such that, for $\nu\to\infty$, we have $\alpha^\nu\to\alpha$, $\beta^\nu\to\beta$ with $\alpha$ or $\beta$ equal to $1$. We can assume without loss of generality that $A^\nu$ and $B^\nu$ are standard annuli whose inner radius is equal to $1$ and whose outer radius is equal to $\alpha^\nu$, respectively $\beta^\nu$, and also that $f_\pm^\nu\to f_\pm$, $g_\pm^\nu\to g_\pm$, the limits being analytic parametrizations of the standard circles of corresponding radii $1$, $\alpha$ and $\beta$. 

We prove the claim in the case $\alpha>1$ and $\beta=1$. The glued annulus $A^\nu \# B^\nu$ has input given by the boundary component $\p^+B^\nu$ with parametrization $g_+^\nu$, and output given by the boundary component $\p^-A^\nu$ with parametrization $f_-^\nu$. See Figure~\ref{fig:annuli}. As $\nu\to\infty$, the input $\p^+B^\nu$ of $B^\nu$ --- which is the standard circle of radius $\beta^\nu$ in $\mathbb{C}$ --- converges pointwise with respect to the standard parametrization to the standard circle of radius $1$ with its standard parametrization, viewed as $\p^-B^\nu$ for all $\nu$. The latter is identified with $\p^+A^\nu$ via $f_+^\nu (g_-^\nu)^{-1}$. As such, the limit of the composition $A^\nu \# B^\nu$ is canonically identified with the limit $A$ of the sequence $A^\nu$, and this identification is given by $f_+g_-^{-1}$ along the input boundary component. The input boundary component of the limit inherits the parametrization $g_+$, and via this identification the latter corresponds to the parametrization $f_+g_-^{-1}g_+$ of the input boundary component of $A$. As far as the output boundary component of the limit is concerned, it is canonically identified with the output boundary component of $A$ and inherits as such the parametrization $f_-$. This shows that 
\begin{eqnarray*}
\lefteqn{\!\!\!\!\!\!\!\!\!\!\!\!\!\!\!\!\!\!\lim_{\nu\to\infty}[(A^\nu,f_-^\nu,f_+^\nu,\alpha^\nu)]\circ [(B^\nu,g_-^\nu,g_+^\nu,\beta^\nu)]}\\
& = & [(A,f_-,f_+g_-^{-1}g_+,\alpha)] \\
& = & [(A,f_-,f_+,\alpha)]\circ [(S^1,g_-,g_+)]\\
& = & \lim_{\nu\to\infty}[(A^\nu,f_-^\nu,f_+^\nu,\alpha^\nu)] \circ \lim_{\nu\to\infty}[(B^\nu,g_-^\nu,g_+^\nu,\beta^\nu)].
\end{eqnarray*}

The proof of the claim in the cases $\alpha=1$, $\beta >1$ and $\alpha=\beta=1$ is analogous and we omit it. 

\begin{figure} [ht]
\centering
\input{annuli.pstex_t}
\caption{We depict a (framed) annulus as a horizontal cylinder of finite length, with its input boundary component to the right and its output boundary component to the left. The composition $A\circ B$ of two framed annuli is depicted by drawing $A$ to the left of $B$.}
\label{fig:annuli}
\end{figure}

\begin{definition}
We define $\widetilde{\mathrm{Fr}}_\partial$ to be the extension of $\mathrm{Fr}_\partial$ by possibly degenerate framed annuli,
$$
\widetilde{\mathrm{Fr}}_\partial = \mathrm{Fr}_\partial \sqcup _{\mathrm{Ann}}\widetilde{\mathrm{Ann}}.
$$
\end{definition}

 \subsection{Framed nodal annuli} \label{sec:framed_nodal_annuli}
 Ordinary annuli have modulus parameter $\alpha\in (0, \infty).$ By introducing degenerate annuli, we have extended the possible parameters to $[0, \infty)$. In this section we will further extend the possible modulus parameters from $[0, \infty)$ to $[0, \infty].$ We do this by adding a new class of annuli, called \emph{nodal annuli}, which have modulus parameter $\infty$. While introducing degenerate moduli did not change the homotopy type of the topological monoid $\t{Ann}$, adding in nodal annuli has a strong destructive effect: it makes the monoid contractible.

 \begin{definition}
   We say that a complex surface with analytically para\-metrized boundary is a \emph{framed nodal annulus} if it has two  boundary components, genus zero, and at most nodal singularities. (In order to shorten notation, the term ``nodal annuli'' includes ordinary annuli with no nodes.)
 \end{definition}

 We say that a framed nodal annulus is \emph{unstable} if it has an irreducible component which contains no boundary components (equivalently, if it has a component of genus zero and infinite automorphism group), and \emph{stable} otherwise. See Figure~\ref{fig:unstable}. Note that all stable framed nodal annuli either have one irreducible component containing both boundary circles (i.e., they are ordinary framed annuli), or two irreducible components of which one contains the incoming circle and the other contains the outgoing circle. The {\sl stabilization} of an unstable nodal annulus is obtained by contracting all irreducible components which have no boundary. We will be interested in the moduli space of stable framed nodal annuli, viewed as quotients of possibly unstable framed nodal annuli by the equivalence relation induced by stabilization. We write 
 $$
 \t{NodAnn}
 $$ 
 for the moduli space of stable framed nodal annuli. We topologize this space similarly to our moduli space of surfaces with boundary above. Namely, given a stable framed nodal annulus, we get a point of $\overline{\mathcal{M}}_{0,4}$ by gluing in disks along both parametrized boundary components, and marking the images of $\pm 1\subset S^1$ in both boundary components in the resulting genus zero curve. In this way, we can view $\t{NodAnn}$ as a subspace in the bundle over $\overline{\mathcal{M}}_{0,4}$ whose fiber consists of pairs of parametrized disjoint embedded analytic closed curves whose parametrizations map $\pm 1\in S^1$ to the marked points. 
 
\begin{figure} [ht]
\centering
\input{unstable.pstex_t}
\caption{Unstable/stable framed nodal annuli.}
\label{fig:unstable}
\end{figure}
 
 For the next lemma, recall that we denote $\Aut(S^1)$ the group of analytic automorphisms of $S^1$ with analytic inverse, and $\mathrm{Aut}_0(S^1)\subset\mathrm{Aut}(S^1)$ denotes the subgroup of automorphisms which fix $1\in S^1$. As explained in~\S\ref{sec:framed_surfaces}, the group $\mathrm{Aut}_0(S^1)$ is contractible. 
  \begin{lemma} \label{lem:nodann-contractible}
   The moduli space of stable framed nodal annuli is homeomorphic to 
   $$\big( \mathrm{Aut}_0(S^1)\times \mathrm{Aut}_0(S^1) \big)\times \mathbb{C}.$$  
   In particular, it is contractible. 
 \end{lemma}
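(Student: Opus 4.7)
The plan is to construct an explicit homeomorphism by stratifying $\t{NodAnn}$ into its smooth part $\t{Ann}$ and its one-node stratum, parameterizing each stratum separately via normalization to standard analytic models, and then gluing the two descriptions together through a plumbing-type coordinate $z\in\mathbb{C}$ transverse to the nodal locus.

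For the smooth stratum, I would invoke Schottky's theorem to represent any framed annulus by a standard annulus $A_r=\{r\le |w|\le 1\}$ with $r\in(0,1)$, carrying boundary parametrizations $f_\pm$. The residual $S^1$ of conformal automorphisms of $A_r$ acts diagonally by rotation; I would exhaust this gauge freedom by imposing $f_+(1)=1$, which forces $f_+\in\mathrm{Aut}_0(S^1)$. Writing $f_-(w)=r e^{i\theta}\phi_-(w)$ with $\phi_-(1)=1$ then encodes $f_-$ uniquely as a pair $(\theta,\phi_-)\in S^1\times\mathrm{Aut}_0(S^1)$. Setting $z:=r e^{i\theta}\in\mathbb{D}^*\subset\mathbb{C}$ yields a continuous bijection $\t{Ann}\cong \mathrm{Aut}_0(S^1)\times\mathrm{Aut}_0(S^1)\times\mathbb{D}^*$.

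For the nodal stratum, any stable one-node framed nodal annulus splits along its node into two pointed framed disks $D_\pm$, each carrying one parametrized boundary component and one marked interior point (the node). Using the Riemann mapping theorem followed by the residual $S^1$-rotation preserving the marked interior point, each pointed framed disk is uniquely represented as $(\mathbb{D},0,\phi_\pm)$ with $\phi_\pm(1)=1$. Hence the nodal stratum is in bijection with $\mathrm{Aut}_0(S^1)\times\mathrm{Aut}_0(S^1)$, which I would identify with the slice $\{z=0\}$.

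The core of the argument, and the main obstacle, is continuity of the assembled bijection at $z=0$ in the topology of $\t{NodAnn}$ inherited from the bundle over $\overline{\mathcal{M}}_{0,4}$. I would argue that as $z=r e^{i\theta}\to 0$ with $\phi_\pm$ fixed, the $4$-marked curve $\mathbb{P}^1\supset A_r$---with marked points $\{1,\phi_+(-1)\}$ on the unit circle and $\{r e^{i\theta},r e^{i\theta}\phi_-(-1)\}$ near the origin---degenerates in $\overline{\mathcal{M}}_{0,4}$ to a nodal curve obtained by bubbling off a $\mathbb{P}^1$-component at the origin which carries the two output marked points; simultaneously the two analytically parametrized embedded loops converge, uniformly on compact subsets away from the node, to the loops in the pointed framed disks prescribed by $\phi_\pm$. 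This is a standard instance of the plumbing description of a neighborhood of a boundary stratum of $\overline{\mathcal{M}}_{0,n}$, but matching it carefully to our coordinates $(\phi_+,\phi_-,z)$ is what requires actual work. Once continuity is established, contractibility follows immediately, as $\mathrm{Aut}_0(S^1)$ is the convex space of analytic functions noted after the definition of framed surfaces, and $\mathbb{C}$ is evidently contractible.
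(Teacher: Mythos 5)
Your proposal follows essentially the same route as the paper's proof: identify $\t{Ann}$ with $\mathrm{Aut}_0(S^1)\times\mathrm{Aut}_0(S^1)\times(\text{annular-shaped piece of }\C)$ by fixing the diagonal $S^1$ gauge and combining the remaining $S^1$ factor with the modulus into a single complex parameter, identify the one-node stratum with $\mathrm{Aut}_0(S^1)\times\mathrm{Aut}_0(S^1)$ by splitting along the node and quotienting each pointed disk by its $S^1$ of rotations, and then check that the point added at the nodal stratum closes up the complex parameter. The only cosmetic difference is that the paper uses the abstract modulus $\alpha\in(0,\infty)$ and packages $S^1\times(0,\infty)\cong\C^\times$ with the nodal stratum attached at $|z|\to\infty$ before applying an inversion, whereas you normalize to $A_r=\{r\le|w|\le1\}$ and take $z=re^{i\theta}\in\mathbb{D}^*$ with the nodal stratum at $z=0$ (you should note at the end that $\mathbb{D}\cong\C$, but this is immediate). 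You are, if anything, slightly more explicit than the paper about where the content lies: the paper declares the convergence in $\t{NodAnn}$ as $\alpha^\nu\to\infty$ essentially by fiat as an "alternative description" of the topology it has already set up via the bundle over $\overline{\mathcal{M}}_{0,4}$, while you correctly flag that reconciling your coordinates $(\phi_+,\phi_-,z)$ with the subspace topology inherited from that bundle is the step requiring a genuine plumbing argument, and you sketch how that bubbling works. Your sketch of that check is correct, and filling it in would be a welcome addition rather than a departure from the paper's argument.
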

 
\begin{proof} Consider the action of $S^1$ on $\mathrm{Aut}(S^1)$ by translations in the target. The moduli space of stable framed nodal annuli containing a node is identified with $\mathrm{Aut}(S^1)/S^1\times \mathrm{Aut}(S^1)/S^1$. Indeed, each of the two irreducible components of the underlying Riemann surface is equivalent to a disk with a marked point at the origin. The group of automorphisms of the latter is $S^1$, given by rotations, and it acts on the analytic parametrizations of its boundary by translations in the target. Writing $f (\mathrm{mod}\, S^1)$ for the class of an element of $\mathrm{Aut}(S^1)$ modulo the action of $S^1$, an arbitrary element of this moduli space can thus be written $(f_- (\mathrm{mod}\, S^1), f_+ (\mathrm{mod}\, S^1))$. 

With this understood, the topology on $\t{NodAnn}$ can be alternatively described as follows. Let $[(A^\nu,f_\pm^\nu)]$, $\nu\ge 1$ be a sequence in $\t{Ann}$ with moduli $\alpha^\nu\to\infty$, $\nu\to\infty$. Choose representatives $A^\nu=[-\alpha^\nu/2,\alpha^\nu/2]\times S^1$ and $(f_-^\nu,f_+^\nu)\in \mathrm{Aut}(S^1)\times_{S^1}\mathrm{Aut}(S^1)$ and assume that $(f_-^\nu,f_+^\nu)\to (f_-,f_+)$ as $\nu\to\infty$. We then have by definition 
\[
[(A^\nu,f_\pm^\nu)]\to (f_- (\mathrm{mod}\, S^1),f_+ (\mathrm{mod}\, S^1)), \qquad \nu\to\infty.
\] 

By marking the point $1\in S^1$ we obtain homeomorphisms 
\[
\mathrm{Aut}(S^1)/S^1\simeq \mathrm{Aut}_0(S^1), \qquad \mathrm{Aut}(S^1)\simeq \mathrm{Aut}_0(S^1)\times S^1,
\] 
and 
\[
\mathrm{Aut}(S^1)\times_{S^1}\mathrm{Aut}(S^1)\simeq \mathrm{Aut}_0(S^1)\times\mathrm{Aut}_0(S^1)\times S^1.
\] 
(None of these identifications preserves any group structure, see also Remark~\ref{rmk:semi-direct-product} below.) 

We have already seen that the moduli space $\t{Ann}$ of framed annuli is a trivial bundle over $(0,\infty)$ with fiber $\mathrm{Aut}(S^1)\times_{S^1}\mathrm{Aut}(S^1)$, where $S^1$ acts diagonally. In view of the isomorphism $S^1\times (0,\infty)\simeq \mathbb{C}^\times$, after choosing a trivialization of the bundle $\t{Ann}\to (0,\infty)$ we obtain a homeomorphism 
\[
\t{Ann}\simeq \mathrm{Aut}_0(S^1)\times \mathrm{Aut}_0(S^1)\times \mathbb{C}^\times.
\]
With respect to this identification, the projection $\t{Ann}\to (0,\infty)$ corresponds to the projection $\mathbb{C}^\times\to(0,\infty)$, $z\mapsto |z|$. Also, with respect to the identification of the moduli space of stable framed nodal annuli containing a node with $\mathrm{Aut}_0(S^1)\times\mathrm{Aut}_0(S^1)$, the definition of convergence for a sequence $(f_-^\nu,f_+^\nu,z^\nu)\in \t{Ann}\simeq \mathrm{Aut}_0(S^1)\times \mathrm{Aut}_0(S^1)\times \mathbb{C}^\times$ such that $|z^\nu|\to\infty$ and $(f_-^\nu,f_+^\nu)\to (f_-,f_+)$ as $\nu\to\infty$ translates into 
$(f_-^\nu,f_+^\nu,z^\nu)\to (f_-,f_+)$. In other words, we have a homeomorphism 
\[
\t{NodAnn}\simeq \mathrm{Aut}_0(S^1)\times \mathrm{Aut}_0(S^1) \times (\mathbb{C}^\times\cup\{\infty\}).
\]
Up to an inversion on the factor $\mathbb{C}^\times$, this is the statement of the lemma.
\end{proof} 
 
\begin{remark} \label{rmk:semi-direct-product} Consider the group homomorphism with kernel $\mathrm{Aut}_0(S^1)$ given by the evaluation $\mathrm{Aut}(S^1)\to S^1$, $f\mapsto f(1)$. This admits a section which associates to each element of $S^1$ the corresponding translation, and thus exhibits $\mathrm{Aut}(S^1)$ as a semi-direct product $\mathrm{Aut}(S^1)\simeq \mathrm{Aut}_0(S^1)\rtimes S^1$. Although the action of $S^1$ on $\mathrm{Aut}_0(S^1)$ by conjugation is nontrivial, we do nevertheless have  a homeomorphism at the level of the underlying topological spaces $\mathrm{Aut}(S^1)\simeq \mathrm{Aut}_0(S^1)\times S^1$. On the other hand, there is of course no canonical group structure on the quotient $\mathrm{Aut}(S^1)/S^1$. 
\end{remark}
 
Nodal annuli provide a partial compactification of the space of annuli ``in the modulus $\infty$ limit'', whereas in the previous section we gave a compactification of the space of annuli ``in the modulus $0$ limit''. In particular, these two compactifications can be combined into a new separable topological space of \emph{possibly degenerate stable framed nodal annuli}, 
\[
\widetilde{\t{NodAnn}}  = \widetilde{\t{Ann}}\sqcup_{\t{Ann}}\t{NodAnn}.
\]

Given two possibly degenerate nodal annuli we can glue them to produce a new possibly degenerate nodal annulus. Note that if both annuli have modulus $\infty$ (i.e., have two irreducible components), the resulting glued space will be unstable. Under our convention, we identify the resulting space with its stabilization. It is immediate to check that the resulting composition operation is associative; it is continuous by an argument analogous to the one used in the previous section for the continuity of the multiplication operation on $\widetilde{\t{Ann}}.$

 \subsection{Tree-like nodal surfaces} \label{sec:dendritic}

We recall that all our framed surfaces are labeled, see Remark~\ref{rmk:labelings}. 

 \begin{definition}
Define 
$$
\mathrm{NodFr}_\partial^{\t{tree}}
$$ 
to be the moduli space of stable nodal Riemann surfaces with non-nodal analytically parametrized boundary, with the restriction that \emph{the dual graph of irreducible components is a tree}. Further define 
$$
\widetilde{\mathrm{NodFr}}_\partial^{\t{tree}} = \mathrm{NodFr}_\partial^{\t{tree}}\sqcup_{\mathrm{Ann}}\widetilde{\mathrm{Ann}}.
$$ 
\end{definition}
Note that an element of $\mathrm{NodFr}_\partial^{\t{tree}}$ can have (stable) interior components which carry no boundary parametrizations, and these can have discrete automorphism groups. 
We view $\mathrm{NodFr}_\partial^{\t{tree}}$ as a \emph{topological moduli problem} in the sense of the next definition. We build a theory of such spaces in Appendix~\ref{sec:app_moduli}.

\begin{definition}[See~Definition~\ref{def:topomopro}]
A \emph{topological moduli problem} is a contravariant functor $\Top^{op}\to \t{Gpd}$ from the category of topological spaces to the category of groupoids.
\end{definition}
We write $\t{TMP}$ for the category of such functors, with maps $\X\to \Y$ given by natural transformations. Given a map $f:S\to S'$ of topological spaces we write $f^*:\X(S')\to \X(S)$ for the (contravariantly) associated functor of groupoids. We refer to Appendix~\ref{sec:app_moduli} for further details, and simply recall here that a groupoid is a category $\mathcal{C}$ all of whose morphisms are invertible and such that the isomorphism classes form a set denoted $\pi_0(\mathcal{C})$. 

We view $\mathrm{NodFr}_\partial^{\t{tree}}$ as a topological moduli problem as follows. Given a topological space $S$, an object of the groupoid $\mathrm{NodFr}_\partial^{\t{tree}}(S)$ consists of a continuously varying $S$-family of stable nodal framed Riemann surfaces, and a morphism in this groupoid is an isomorphism of two such families that preserves the structure. We refer to Appendix~\S\ref{sec:app_moduli} for a more precise definition of the meaning of continuity for an $S$-family, which is the analogue of defining the topology for a topological moduli problem.

For the purposes of the current section we limit ourselves to considering the corresponding coarse moduli spaces 
$$
\mathrm{NodFr}_\partial^{\t{tree, coarse}} \qquad \mbox{and}\qquad  \widetilde{\mathrm{NodFr}}_\partial^{\t{tree, coarse}}. 
$$
While any topological moduli problem has an associated coarse moduli space as described in Appendix~\ref{sec:app_moduli}, in our situation the coarse moduli spaces can be obtained from $\mathrm{NodFr}_\partial^{\t{tree}}$, resp. $\widetilde{\mathrm{NodFr}}_\partial^{\t{tree}}$, by topologizing isomorphism classes of points, i.e., the isomorphism classes in the groupoids obtained by applying these TMP's to $\pt$.

Note that $\mathrm{NodAnn}\subset \mathrm{NodFr}_\partial^{\t{tree, coarse}}$. Also, $\widetilde{\mathrm{NodFr}}_\partial^{\t{tree, coarse}}$ differs from $\mathrm{NodFr}_\partial^{\t{tree, coarse}}$ in that it contains degenerate annuli.  We topologize $\widetilde{\mathrm{NodFr}}_\partial^{\t{tree, coarse}}$ as before, by viewing it as embedded in a bundle over the (tree-like) coarse moduli space of closed nodal Riemann surfaces (possibly with some marked points). 

Gluing along the boundary and possibly collapsing determines an operad structure on $\widetilde{\mathrm{NodFr}}_\partial^{\t{tree, coarse}}$. We call it \emph{the operad of possibly degenerate coarse tree-like framed nodal surfaces}, with $n$-to-1 operations given by the coarse moduli spaces of framed nodal curves with $n$ incoming boundary components.  

\begin{remark} In contrast, the fine moduli spaces $\widetilde{\mathrm{NodFr}}_\partial^{\t{tree}}$ fit into the structure of a \emph{Segal operad} in the sense of Appendix~\ref{sec:dendroidal}. We call it \emph{the Segal operad of possibly degenerate tree-like framed nodal surfaces}. This operad will play a role in the proof of our Main Theorem in~\S\ref{sec:proof_main}. 
\end{remark}

For further reference we denote by
$$
\t{NodFr}_\partial
$$
the moduli space of stable nodal Riemann surfaces with non-nodal analytically parametrized boundary, without any restriction on the dual graph, and also 
$$
\widetilde{\t{NodFr}}_\partial = \t{NodFr}_\partial \sqcup_{\mathrm{Ann}}\widetilde{\mathrm{Ann}}.
$$

\begin{theorem}[Geometric Pushout Theorem]\label{thm:geomthm}
The operad 
$$
\widetilde{\mathrm{NodFr}}_\partial^{\t{tree, coarse}}
$$ 
of coarse moduli spaces of possibly degenerate tree-like framed nodal surfaces is canonically isomorphic to the pushout of the following diagram, in which both arrows are inclusions and we work in the category of topological operads:
$$
\widetilde{\mathrm{NodAnn}} \longleftarrow \widetilde{\mathrm{Ann}} \longrightarrow \widetilde{\mathrm{Fr}}_\partial.
$$
\end{theorem}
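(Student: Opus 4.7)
The plan is to construct a canonical operad map $\Phi$ from the pushout $P := \widetilde{\mathrm{NodAnn}} *_{\widetilde{\mathrm{Ann}}} \widetilde{\mathrm{Fr}}_\partial$ to $\widetilde{\mathrm{NodFr}}_\partial^{\t{tree, coarse}}$ via the universal property of pushouts, and then to verify that $\Phi$ is a homeomorphism of topological operads in each arity. The map $\Phi$ arises from the two inclusions $\widetilde{\mathrm{NodAnn}} \hookrightarrow \widetilde{\mathrm{NodFr}}_\partial^{\t{tree, coarse}}$ (a nodal annulus is itself a genus-$0$ tree-like nodal surface with one input and one output) and $\widetilde{\mathrm{Fr}}_\partial \hookrightarrow \widetilde{\mathrm{NodFr}}_\partial^{\t{tree, coarse}}$ (a smooth framed surface is trivially tree-like), which agree on their common subspace $\widetilde{\mathrm{Ann}}$; under $\Phi$, operadic composition in the pushout corresponds to gluing of framed surfaces along matching parametrized boundaries.

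Next, I would prove surjectivity of $\Phi$ by explicitly decomposing an arbitrary tree-like nodal framed surface $\Sigma$ of arity $n$. Its dual graph $T$ is a rooted tree, rooted at the component carrying the output, with interior edges in bijection with the nodes of $\Sigma$ and vertices in bijection with its irreducible components. At each node I would choose (non-canonically) analytic parametrizations of the two infinitesimal sides; this choice simultaneously produces (i) an element of $\widetilde{\mathrm{Fr}}_\partial$ for each irreducible component, now equipped with additional parametrized boundary circles coming from the adjacent nodes, and (ii) a modulus-$\infty$ element of $\widetilde{\mathrm{NodAnn}}$ for each node, with parametrizations matching those of the two surrounding components. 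Grafting these labels onto $T$ yields an element of $P$ whose image under $\Phi$ is $\Sigma$.

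For injectivity, I would put any element of $P$ into a canonical normal form using the pushout relations $\sim_1$ and $\sim_2$. By $\sim_1$, all maximal chains of $\widetilde{\mathrm{Fr}}_\partial$-vertices (resp.\ of $\widetilde{\mathrm{NodAnn}}$-vertices) collapse to single vertices via composition in the respective operad; by $\sim_2$, any ordinary annulus sitting on an edge can be absorbed into an adjacent framed-surface vertex. The resulting normal form is a tree whose vertices are labeled by elements of $\widetilde{\mathrm{Fr}}_\partial$ and each of whose interior edges is labeled by a genuine modulus-$\infty$ nodal annulus, so that the underlying tree matches the dual graph of the associated nodal surface. The only remaining ambiguity between two normal forms of the same $\Sigma$ is the choice of parametrizations at each node, and a change by $\phi \in \mathrm{Aut}(S^1)$ on one side of a node is implemented by a degenerate annulus in $\widetilde{\mathrm{Ann}}$ which, via $\sim_2$ combined with $\sim_1$, can be shifted between the framed-surface vertex and the nodal-annulus vertex on either side, cancelling exactly.

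The main obstacle, I expect, is not combinatorial but topological: showing that $\Phi^{-1}$ is \emph{continuous}, not merely well-defined as a set map. Both $P$ and $\widetilde{\mathrm{NodFr}}_\partial^{\t{tree, coarse}}$ are topologized via local embeddings into bundles over Deligne--Mumford spaces $\overline{\cM}_{g,n+1}$, and the decomposition constructed in the surjectivity step involves non-canonical choices of parametrizations at nodes that must be made coherently in families. The delicate region is near boundary strata, where either a modulus tends to $\infty$ (a new node forming) or to $0$ (a degenerate annulus appearing); verifying continuity there should essentially recycle the modulus-$0$ computation carried out for $\widetilde{\mathrm{Ann}}$ in~\S\ref{sec:framed_annuli} and the modulus-$\infty$ identification of Lemma~\ref{lem:nodann-contractible}, enhanced so as to include parametrizations of the external boundaries coming from the adjacent framed-surface vertices. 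Once these local sections are checked to assemble into a continuous inverse, $\Phi$ is a homeomorphism of topological operads and the theorem follows.
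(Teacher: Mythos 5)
Your outline is structurally sound, and the set-theoretic heart of it matches what the paper proves: the normal form you describe (vertices labeled by elements of $\widetilde{\mathrm{Fr}}_\partial$, edges labeled by modulus-$\infty$ nodal annuli, remaining ambiguity being the choice of parametrization at each node) is precisely what the paper packages as a \emph{protected tree-like split nodal surface}. The paper, however, takes a genuinely different route that avoids the step you yourself flag as the obstacle. Rather than constructing $\Phi$ and then struggling to exhibit a continuous section, the paper builds an explicit geometric model for the free operad: it identifies $\t{Free}_{[\tau]}(\widetilde{\mathrm{Fr}}_\partial)$ with the moduli space $\widetilde{\t{Split}}^{\t{tree}}_\tau$ of framed surfaces carrying analytic \emph{interior seams} whose dual graph is $\tau$ (Lemmas~\ref{lem:from_free_to_seamed}--\ref{lem:from_free_to_seamed_unital_nodal}). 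In this picture the pushout relation $\sim_2$ simply relabels vertices (a homeomorphism onto the protected locus $\widetilde{\t{NodSplit}}^{\t{tree}}_{\t{protected}}$), and $\sim_1$ becomes \emph{erasing a seam}. Because the identification is made before taking quotients, one never needs a section: one shows directly that erasing seams induces a homeomorphism on the $\sim_1$-quotient.

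The gap in your proposal is the continuity of $\Phi^{-1}$. Saying one ``should essentially recycle'' the modulus computations does not constitute an argument, and in fact the issue here is different in kind: the choice of annular collars around nodes is a continuous family of choices varying over a moduli space of nodal curves, and one must show that the quotient topology on $P$ agrees with the topology on $\widetilde{\mathrm{NodFr}}_\partial^{\t{tree, coarse}}$. The paper handles this by choosing a piecewise-analytic Hermitian metric on the normalization (consistently in a neighborhood in moduli) and filtering $\t{Split}^{\t{tree}}_\Sigma$ by the open subsets $\t{Split}^\epsilon_\Sigma$ of split structures with no seam within distance $\epsilon$ of a node. Within each $\t{Split}^\epsilon_\Sigma$ every element is identified under $\sim_1$ with the canonical splitting $(\Sigma, S_\epsilon)$ given by geodesic circles of radius $\epsilon$ around the node preimages, and one checks compatibility of these identifications as $\epsilon \to 0$ (using that $(\Sigma,S_\epsilon) \sim (\Sigma, S_\epsilon \sqcup S_{\epsilon'})\sim (\Sigma, S_{\epsilon'})$). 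This filtration argument is the actual content you would have to supply; without it, the continuity claim is merely an assertion. So: correct outline, different framework, and the crucial topological lemma is left unproved.
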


The geometric idea of the proof is that a nodal surface can be described, though not uniquely, by a successive gluing of framed non-nodal surfaces and nodal annuli. See Figure~\ref{fig:gluing}. When the dual graph of irreducible components is a tree, this data is equivalently encoded in the pushout construction. The equivalence relations defining the pushout construction precisely eliminate the ambiguity, i.e., non-uniqueness, of this description. The equivalence relations underlie pushouts in the topological category, and in particular (as we are not taking homotopy pushouts yet), self-equivalences are ignored, and this explains the ``coarse'' nature of the result.

\begin{figure} [ht]
\centering
\input{gluing.pstex_t}
\caption{One possible presentation of a nodal surface by gluing.}
\label{fig:gluing}
\end{figure}

As such the proof of Theorem~\ref{thm:geomthm} on the level of operads in sets is quite straightforward. 
By re-interpreting the free-forgetful adjunction on the operad of framed surfaces and its relatives, we give a proof of this theorem which also accounts for the topology on the two sides. While the theorem does not imply the homotopy-theoretic pushout result (in order to get a correct model for the homotopy pushout, the diagram of operads must be replaced by a suitable resolution), it is a good intuitive approximation for it. Indeed, the eventual homotopical proof will be based on a topologically enhanced version of exactly the argument presented in the next section.

\subsection{Split Surfaces and the Geometric Pushout Theorem}\label{sec:proofgeom}
The objects of interest in this section will be various moduli spaces of framed surfaces with ``seams'' at embedded curves, which we call ``split surfaces''. We again recall that all our framed surfaces are labeled, see Remark~\ref{rmk:labelings}. 

\begin{definition}
  A \emph{split framed surface with $k$ interior seams} is a pair $(\Sigma,S)$ consisting of a framed surface $\Sigma$ with boundary, together with an analytic embedding $S:(S^1)^{\sqcup k}\xhookrightarrow{} \Sigma$ mapping into the interior of $\Sigma$. 
\end{definition}

By definition, the seams are \emph{parametrized} curves: the \emph{interior seams} are the components $S_i:S^1\to \Sigma$ of the embedding $S=\sqcup_{i=1}^k S_i:(S^1)^{\sqcup k}\xhookrightarrow{} \Sigma$; the parametrized boundary components of $\Sigma$ are called \emph{exterior seams}. In the definition we allow $k=0$, i.e., no interior seams. 

Given a framed surface $\Sigma$, write 
\[
\t{Split}_\Sigma^k
\] 
for the moduli space of all split surface structures on $\Sigma$ with $k$ unordered interior seams. Equivalently, $\t{Split}_\Sigma^k$ is the space of analytic embeddings $(S^1)^{\sqcup k}\xhookrightarrow{} \Sigma$ endowed with the compact-open topology. Write 
\[
\t{Split}^k \,\,\,\,(\,=\,\sqcup_{\Sigma\in \t{Fr}_\partial} \t{Split}_\Sigma^k)
\] 
for the moduli space of all split framed surfaces with $k$ unordered interior seams, and write 
\[
\t{Split}\,\,\,\,(\,=\,\sqcup_{k\ge 0} \t{Split}^k)
\]
for the moduli space of all split framed surfaces with an arbitrary number of unordered interior seams, and 
\[
\t{Split}_\Sigma\,\,\,\,(\,=\,\sqcup_{k\ge 0} \t{Split}^k_\Sigma)
\]
for the moduli space of all split surface structures on $\Sigma$ with an arbitrary number of unordered interior seams. 

To every split surface $(\Sigma, S)$ is associated a ``dual graph'' $$\Gamma_{\Sigma, S}$$ with $k$ interior edges, which is a directed graph with half-edges. Vertices are indexed by the connected components of $\Sigma\setminus S,$ internal edges are indexed by interior seams (the orientation of the normal bundle along a seam determines a direction for the corresponding edge) and half-edges are indexed by external seams (each of these belongs to the closure of a single connected component of $\Sigma\setminus S$). In particular, since the incoming external seams of $\Sigma$ are labeled by definition, the dual graph inherits a labeling of its incoming half-edges. Note that two split surfaces in the same connected component of $\t{Split}$ have the same dual graph, so given a labeled graph $\Gamma$ we can write 
\[
\t{Split}_\Gamma
\] 
for the union of connected components of $\t{Split}$ with dual graph $\Gamma$. 
The following observation is straightforward. 

\begin{lemma}
Let $\Sigma$ be connected. The dual graph $\Gamma_{\Sigma, S}$ associated to a split surface $(\Sigma, S)$ is a tree if and only if the image of each interior seam is separating, i.e., its complement is disconnected. \qed
\end{lemma}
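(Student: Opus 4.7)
My plan is to reduce the statement to the standard graph-theoretic characterization: a finite connected graph is a tree if and only if every edge is a bridge (i.e.\ its removal disconnects the graph). Since the surface $\Sigma$ is connected and has only finitely many pieces, the dual graph $\Gamma_{\Sigma,S}$ is finite; once I establish that it is connected, I only need to check that an interior edge $e_i$ is a bridge precisely when the corresponding interior seam $S_i$ is separating.

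First I would verify that $\Gamma_{\Sigma,S}$ is connected. Given any two components $v,w$ of $\Sigma\setminus S$, pick points $p\in v$, $q\in w$ and a path in the connected surface $\Sigma$ joining them; after a generic perturbation inside the interior of $\Sigma$, this path can be assumed to intersect $S$ transversely in finitely many points. Reading off the sequence of components of $\Sigma\setminus S$ visited by the path and the interior seams crossed, one obtains a walk in $\Gamma_{\Sigma,S}$ from $v$ to $w$. Hence $\Gamma_{\Sigma,S}$ is connected.

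Next, I would establish the key correspondence: for each interior seam $S_i$, the connected components of $\Sigma\setminus S_i$ are in natural bijection with the connected components of the graph $\Gamma_{\Sigma,S}$ with the edge $e_i$ removed (but both of its endpoints retained). The forward map sends a component $v$ of $\Sigma\setminus S$ to the component of $\Sigma\setminus S_i$ containing $v$. This map is surjective, and to check injectivity into the set of components of $\Gamma_{\Sigma,S}\setminus e_i$, one argues as in the previous step: if $v$ and $w$ lie in the same component of $\Sigma\setminus S_i$, a generic path joining chosen points of $v$ and $w$ inside $\Sigma\setminus S_i$ crosses only seams different from $S_i$, hence yields a walk in $\Gamma_{\Sigma,S}\setminus e_i$ from $v$ to $w$. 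Conversely, if there is such a walk, gluing along the intermediate seams produces a path in $\Sigma\setminus S_i$.

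Combining these, the edge $e_i$ is a bridge in $\Gamma_{\Sigma,S}$ if and only if $\Sigma\setminus S_i$ is disconnected, i.e.\ $S_i$ is separating. Applying the graph-theoretic fact recalled above to the connected finite graph $\Gamma_{\Sigma,S}$ then gives the desired equivalence. The only step requiring some care is the bijection of components in the second paragraph, where one must check that the transverse-perturbation argument goes through for interior seams (which is fine because each seam has a tubular annular neighborhood in the interior of $\Sigma$); everything else is formal.
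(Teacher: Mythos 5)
The paper presents this lemma as a ``straightforward observation'' and supplies no proof, so there is nothing to compare against; your argument is the natural elementary one and it is correct. You rightly reduce to the bridge characterization of trees, verify connectedness of $\Gamma_{\Sigma,S}$ from connectedness of $\Sigma$, and establish the key bijection between components of $\Sigma\setminus S_i$ and components of $\Gamma_{\Sigma,S}\setminus e_i$; this bijection handles all the mildly delicate cases (a non-separating seam whose two sides lie in the same component of $\Sigma\setminus S$ gives a loop $e_i$, which is correctly never a bridge, and two seams bounding the same pair of components give a multi-edge, again correctly never a bridge). One cosmetic remark: since $\Gamma_{\Sigma,S}$ is a graph with half-edges (the external seams), ``tree'' here implicitly refers to the underlying graph of vertices and full edges, but as half-edges have only one endpoint they cannot create cycles, so this does not affect the argument.
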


Split framed surfaces are a convenient model for the free operad on the $\fS$-graded space underlying $\t{Fr}_\partial$ (the source of the free-forgetful adjunction map), as we now explain. Write 

\[
\t{Split}_\Sigma^{k,\t{tree}} \subset \t{Split}_\Sigma^k
\] 

for the moduli space of all split surface structures $S$ on $\Sigma$ with $k$ unordered interior seams such that the dual graph $\Gamma_{\Sigma,S}$ is a tree. Further denote 

\[
\t{Split}^{k,\t{tree}} \,\,\,\,\,=\,\sqcup_{\Sigma\in \t{Fr}_\partial} \t{Split}_\Sigma^{k,\t{tree}} \subset \t{Split}^k,
\] 

\[
\t{Split}^{\t{tree}}\,\,\,\,\,=\,\sqcup_{k\ge 0} \t{Split}^{k,\t{tree}}\subset \t{Split},
\]

\[
\t{Split}^{\t{tree}}_\Sigma\,\,\,\,\,=\,\sqcup_{k\ge 0} \t{Split}^{k,\t{tree}}_\Sigma \subset \t{Split}_\Sigma,
\]
and 
\[
\t{Split}^{\t{tree}}_\Gamma = \t{Split}_\Gamma 
\]
for any labeled tree $\Gamma$. 
We call these moduli spaces of split surface structures \emph{tree-like}. 

Let $\tau$ be a labeled tree of operations 
and let $[\tau]$ be its isomorphism class with respect to the isomorphism relation described in~\S\ref{sec:operad_of_labeled_rooted_trees}.   
Recall that, for any operad $O$, the free operad $\t{Free}(O)$ has components $\t{Free}_{[\tau]}(O)$ indexed by such isomorphism classes. 

\begin{lemma} \label{lem:from_free_to_seamed}
Let $\tau$ be a labeled tree of operations. 
We have a canonical homeomorphism 
\[
G:\t{Free}_{[\tau]}(\t{Fr}_\partial)\stackrel{\simeq}{\longrightarrow} \mathrm{Split}^{\t{tree}}_\tau.
\] 
\end{lemma}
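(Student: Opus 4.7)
The plan is to realize $G$ explicitly by gluing the decorated framed surfaces along the internal edges of the tree, and to construct its inverse by cutting a split surface along its interior seams. I expect the main technical work to lie in continuity of the inverse direction; the combinatorial descent to equivalence classes is essentially bookkeeping.

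First, I would define $G$ on a representative $(\tau', \mathbf{x}, \lambda) \in \t{PlanarTree}_n(\t{Fr}_\partial)$ lying over an isomorphism class in $[\tau]$. For each vertex $v$ of $\tau'$, the planar ordering of the incoming half-edges and internal edges at $v$ identifies the incidence data with the numbered incoming boundary components of the framed surface $x_v$, while the outgoing edge at $v$ corresponds to the outgoing boundary component of $x_v$. For each internal edge $e$ of $\tau'$, I glue $x_{t(e)}$ to $x_{h(e)}$ along the corresponding pair of boundary components using the gluing operation of \S\ref{sec:framed_surfaces}. Operad associativity in $\t{Fr}_\partial$ guarantees that the result is independent of the order in which these gluings are performed, yielding a single framed surface $\Sigma$ together with a canonical analytic embedding $S:(S^1)^{\sqcup k} \rInto \Sigma$ whose components are the gluing circles (one for each internal edge of $\tau'$). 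The dual graph $\Gamma_{\Sigma,S}$ is tautologically isomorphic to $\tau'$, and the labeling $\lambda$ transports to a labeling of the incoming boundary components of $\Sigma$, so I set $G([\tau', \mathbf{x}, \lambda]) := [(\Sigma, S)] \in \t{Split}^{\t{tree}}_\tau$.

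Next I would verify that $G$ descends to equivalence classes. An equivalence $\phi:(\tau', \mathbf{x}, \lambda) \to (\tau'', \mathbf{x}', \lambda')$ satisfies $x'_{\phi(v)} = x_v \sigma_\phi$ for the permutation $\sigma_\phi \in \fS_{|v|}$ comparing planar orderings at $v$ and $\phi(v)$; the permutation rule on the right factors of $\t{Fr}_\partial$ (i.e.\ relabeling incoming boundary components) is precisely what matches the resulting gluing prescriptions, so both representatives produce canonically isomorphic glued surfaces with seams. For the inverse construction, given $(\Sigma, S) \in \t{Split}^{\t{tree}}_\tau$ I cut $\Sigma$ along the image of $S$; each connected component inherits a framed surface structure in which each cut contributes two new parametrized boundary components, and the orientation of $\Sigma$ determines which of the two is the incoming side so that regluing recovers $\Sigma$. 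The combinatorial arrangement of the pieces is $\Gamma_{\Sigma, S} \simeq \tau$; any choice of tree isomorphism together with a planar refinement yields a representative of a class in $\t{Free}_{[\tau]}(\t{Fr}_\partial)$, independent of these choices up to the equivalence relation of \S\ref{sec:free_operad_functor}. The two constructions are reciprocal by inspection.

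Finally, continuity of $G$ reduces to continuity of composition in the topological operad $\t{Fr}_\partial$ together with continuity of the assignment to each internal edge of its image circle in the glued surface. The hard part will be continuity of $G^{-1}$: I would argue that for each $(\Sigma, S)$ the embedding $S$ admits, by analytic extension of its circle parametrizations, a collar neighborhood varying continuously with $(\Sigma, S)$ in the compact-open topology, and that cutting along such a collar produces framed surfaces with continuously varying analytic boundary parametrizations. This uses crucially that the image of $S$ lies in the interior of $\Sigma$, so that collars exist uniformly on a neighborhood of any given $(\Sigma, S)$ in $\t{Split}^{\t{tree}}_\tau$. Once this is established, $G$ is a continuous bijection with continuous inverse between the two moduli spaces.
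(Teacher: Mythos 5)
Your proof takes essentially the same route as the paper's: you define $G$ by gluing the vertex-labeling surfaces according to the planar tree structure and the boundary labelings, check that $G$ descends along the equivalence relation of \S\ref{sec:free_operad_functor}, and identify the fibers with equivalence classes of planar trees. You supply somewhat more explicit detail than the paper on the inverse (cutting along seams, with orientations of the seams dictating input/output sides) and on continuity of $G^{-1}$ via analytic collar extensions, which the paper treats as immediate.
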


The fact which underlies the proof of Lemma~\ref{lem:from_free_to_seamed} is that, given a framed surface $\Sigma''$, the data of an interior seam whose image is separating is equivalent to the data of a decomposition of $\Sigma''$ as a gluing of two framed surfaces along one boundary component.  Obviously, such a seam determines such a decomposition of $\Sigma''$. Conversely, given two framed surfaces $\Sigma,\Sigma'$ and a choice of boundary components $\gamma\subset \Sigma$ which is incoming and $\gamma'\subset \Sigma'$ which is outgoing, with corresponding framings $f:S^1\to \gamma$ and $f':S^1\to\gamma'$, the glued surface $\Sigma''=\Sigma\sharp_{\gamma, \gamma'}\Sigma'$
inherits a seam, i.e., a distinguished analytic embedding of $S^1$ into its interior, given with respect to the canonical inclusions $\Sigma,\Sigma'\hookrightarrow \Sigma''$ by either of the equal compositions 
\[
S^1\stackrel{f}{\longrightarrow} \gamma\hookrightarrow\Sigma\hookrightarrow \Sigma''
\]
or
\[
S^1\stackrel{f'}{\longrightarrow} \gamma'\hookrightarrow\Sigma'\hookrightarrow \Sigma''. 
\]

\begin{proof}[Proof of Lemma~\ref{lem:from_free_to_seamed}] 
Let $n\ge 0$ 
be the number of incoming half-edges of $\tau$. Recall from~\S\ref{sec:free_operad_functor} that $\t{Tree}_n(\t{Fr}_\partial)$ denotes the space of labeled rooted trees with vertices colored by elements of $\t{Fr}_\partial$ and with $n$ incoming half-edges. Denote $\t{Tree}_{[\tau]}(\t{Fr}_\partial)\subset \t{Tree}_n(\t{Fr}_\partial)$ the subset consisting of those elements whose underlying labeled rooted tree is isomorphic to $\tau$. We have a canonical ``gluing'' map 
\[
G:\t{Tree}_{[\tau]}(\t{Fr}_\partial)\to \t{Split}^{\t{tree}}_\tau
\]
given by gluing framed surfaces according to the underlying labeled rooted tree. Indeed, the incoming boundary components of the element of $\t{Fr}_\partial$ that colors a vertex $v\in\tau$ are labeled by the finite set $\t{in}(v)$, and this prescribes the gluing uniquely. By definition, the resulting split surface belongs to $\t{Split}^{\t{tree}}_\tau$. 

The map is clearly continuous, surjective, and the fiber over each element of $\t{Split}^{\t{tree}}_\tau$ is canonically identified with an equivalence class as described in~\S\ref{sec:free_operad_functor}. As such, it descends to a homeomorphism
\[
G:\t{Free}_{[\tau]}(\t{Fr}_\partial)\stackrel\simeq\longrightarrow \t{Split}^{\t{tree}}_\tau,
\]
where $\t{Free}_{[\tau]}(\t{Fr}_\partial)=Tree_{[\tau]}(\t{Fr}_\partial)$ is the quotient of $\t{Tree}_{[\tau]}(\t{Fr}_\partial)$ under the equivalence relation described in~\S\ref{sec:free_operad_functor}. 
\end{proof}

To extend the above result to $\widetilde{\t{Fr}}_\partial,$ we compactify $\t{Split}$ by allowing interior components of thickness zero:
\begin{definition} \label{defi:split-tilde}
  Let 
  $$
  \widetilde{\mathrm{Split}}
  $$ 
  be the partial compactification of $\mathrm{Split}$ which allows two seams (internal or external) $S^1\to \Sigma$ to intersect if and only if they have the same image with the same orientation, and which also allows $\Sigma$ to be a framed degenerate annulus. 
\end{definition}
We have corresponding partial compactifications 
$$
\widetilde{\t{Split}}^k_\Sigma,\qquad \widetilde{\t{Split}}^k, \qquad \widetilde{\t{Split}}_\Sigma
$$
of the moduli spaces $\t{Split}^k_\Sigma$, $\t{Split}^k$, and $\t{Split}_\Sigma$ respectively, and also for their tree-like and labeled tree-like counterparts, with similar notations $\widetilde{\t{Split}}^{\t{tree}}$ etc. 

Points of $\widetilde{\t{Split}}_\Sigma$ over a fixed surface $\Sigma$ are indexed by maps $S: (S^1)^{\sqcup k}\to \Sigma$ which allow seams with compatible orientation to coincide as above, with the additional data of an ordering of all copies of $S^1$ mapping to a given closed oriented curve. The notion of dual graph $\Gamma=\Gamma_{(\Sigma,S)}$ for such an element $(\Sigma,S)$ is defined as follows. The vertices of $\Gamma$ are of two kinds: they correspond either to the connected components of $\Sigma\setminus S$, or to pairs of interior seams which have the same image and which are immediate successors for the given ordering. The edges correspond to interior seams. One sees that the ordering of the copies of $S^1$ mapping to a given closed oriented curve precisely resolves the ambiguity in the dual graph by specifying a ``composition order'' of the thickness-zero annuli they ``bound''. 

Given a labeled tree $\Gamma$ 
we have corresponding moduli spaces $\widetilde{\t{Split}}_\Gamma=\widetilde{\t{Split}}^{\t{tree}}_\Gamma$. The proof of the following lemma is in all points similar to that of Lemma~\ref{lem:from_free_to_seamed}, hence we omit it. 

\begin{lemma} \label{lem:from_free_to_seamed_unital}
Let $\tau$ be a labeled tree of operations. 
We have a canonical homeomorphism 
\[
G:\t{Free}_{[\tau]}(\widetilde{\t{Fr}}_\partial)\stackrel{\simeq}{\longrightarrow} \widetilde{\t{Split}}^{\t{tree}}_\tau. 
\] 
\qed
\end{lemma}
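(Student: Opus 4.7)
The plan is to adapt the proof of Lemma~\ref{lem:from_free_to_seamed} essentially verbatim, taking care at the two places where the presence of degenerate annuli enters: the decorations at certain vertices of $\tau$, and the ordering data on coinciding seams in the definition of $\widetilde{\t{Split}}$. First, I would construct the gluing map at the level of planar trees, i.e. define
$$
G:\t{PlanarTree}_{[\tau]}(\widetilde{\t{Fr}}_\partial)\to \widetilde{\t{Split}}^{\t{tree}}_\tau,
$$
by the same recipe as before: given a planar representative of $\tau$ and a decoration of each vertex $v$ by a possibly degenerate framed surface $\Sigma_v\in \widetilde{\t{Fr}}_\partial$, glue the $\Sigma_v$ together along each internal edge using the labellings of incoming boundary components and the planar ordering to prescribe which outgoing boundary of a child attaches to which incoming boundary of the parent. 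At an edge both of whose adjacent vertices are decorated by ordinary framed surfaces, we get an ordinary interior seam as in Lemma~\ref{lem:from_free_to_seamed}; at any edge incident to a vertex decorated by a degenerate annulus, the gluing collapses the annulus and produces a pair of coinciding seams (or a chain of such seams if several degenerate annuli meet).

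Next I would check that $G$ lands in $\widetilde{\t{Split}}^{\t{tree}}_\tau$ and record the extra ordering data on coinciding seams. The key observation is the following: whenever a maximal subchain of vertices in the planar representative of $\tau$ is decorated by degenerate annuli, the unique linear order in which these annuli sit between their two ``ordinary'' neighbors in the tree exactly produces the ordering on copies of $S^1$ mapping to the resulting curve that is built into the definition of $\widetilde{\t{Split}}$. This is precisely what Definition~\ref{defi:split-tilde} and the subsequent paragraph record as the dual-graph bookkeeping. Because $\tau$ is a tree, every internal edge is separating, so the resulting split framed surface is tree-like, and its dual graph is identified with $\tau$ (after the additional vertices coming from coinciding seam pairs are absorbed into the appropriate chains).

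Continuity of $G$ is a direct consequence of continuity of the gluing operation in $\widetilde{\t{Fr}}_\partial$, established in~\S\ref{sec:framed_surfaces} and~\S\ref{sec:framed_annuli}; in particular, a sequence of decorations in which some modulus parameter tends to $0$ converges to a decoration including a degenerate annulus, and on the split-surface side this exactly corresponds to two seams coalescing with matching orientation. Surjectivity follows by the same ``cut along interior seams'' procedure as in Lemma~\ref{lem:from_free_to_seamed}, with the additional remark that at a coinciding pair of seams (resp.\ a chain of $k$ coinciding seams) one inserts a single degenerate annulus (resp.\ $k$ degenerate annuli, in the prescribed order) to recover a planar-tree representative.

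The main technical point, and the step I expect to be the only nonroutine one, is the identification of fibers: I must show that two decorated planar trees in $\t{PlanarTree}_{[\tau]}(\widetilde{\t{Fr}}_\partial)$ have the same image under $G$ if and only if they are related by the equivalence relation from~\S\ref{sec:free_operad_functor} (i.e.\ by a non-planar isomorphism $\phi:\tau\stackrel\sim\to \tau'$ twisting the decorations by $\sigma_\phi$). Once this is verified, $G$ descends to a continuous bijection $\t{Free}_{[\tau]}(\widetilde{\t{Fr}}_\partial)=Tree_{[\tau]}(\widetilde{\t{Fr}}_\partial)\to \widetilde{\t{Split}}^{\t{tree}}_\tau$, and a standard local-section argument (using analytic parametrizations of the seams to produce continuous inverses) upgrades this to a homeomorphism, concluding the proof.
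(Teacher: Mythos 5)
Your proposal is correct and takes exactly the route the paper intends: the paper omits this proof, stating only that it ``is in all points similar to that of Lemma~\ref{lem:from_free_to_seamed},'' and you carry out that adaptation faithfully, correctly isolating the two places where degenerate annuli intervene (chains of degenerate-annulus vertices producing coinciding seams, and the ordering data on those seams recorded in Definition~\ref{defi:split-tilde}) and checking that they match the extra bookkeeping in the dual-graph description of $\widetilde{\t{Split}}$.

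One tiny point to tidy in the surjectivity step: if a single oriented circle carries $m$ coinciding seams, cutting should insert $m-1$ degenerate annuli (one per consecutive pair in the prescribed order), so make sure your phrase ``a chain of $k$ coinciding seams $\mapsto k$ degenerate annuli'' is using $k$ to count consecutive coincidences rather than seams, otherwise it disagrees with your own base case ``a coinciding pair $\mapsto$ one degenerate annulus.''
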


In order to extend the result to
$\widetilde{\t{NodFr}}_\partial^\t{tree, coarse}$
we need to further define moduli spaces of framed nodal surfaces with seams. 

\begin{definition}
  Let
  
  \[
  \t{NodSplit}
  \]
  
  be the moduli space of framed nodal surfaces $\Sigma$ endowed with an embedding $(S^1)^{\sqcup k}\to \mathring{\Sigma}_{\t{smooth}}$ of a finite number $k\ge 0$ of parametrized seams \emph{in the open smooth locus}. 
  The objects classified by $\t{NodSplit}$ are called \emph{split framed nodal surfaces}. 
\end{definition}

The notion of dual graph for a split framed nodal surface $(\Sigma,S)$ is defined as follows: its vertices are the \emph{connected} components (not the irreducible components) of $\Sigma\setminus S$, and in particular the dual graph in this context ignores nodes. The edges correspond to interior seams as before. We can further define moduli spaces $\t{NodSplit}^{\t{tree}}$, $\t{NodSplit}^{\t{tree, coarse}}$ etc. as above. 

It is again convenient for unitality purposes to extend the setup by including degenerate annuli. 

\begin{definition} \label{defi:nod-split-tilde}
  Let 
  $$
  \widetilde{\t{NodSplit}}
  $$ 
  be the partial compactification of $\t{NodSplit}$ obtained by allowing $S$ to include coinciding circles bounding thickness-zero annuli, as in Definition~\ref{defi:split-tilde}. 
\end{definition}

Similarly to the non-nodal case, we consider as part of the data an ordering of the interior seams which have the same oriented image. We have the same notion of dual graph, and we can further define moduli spaces $\widetilde{\t{NodSplit}}^{\t{tree}}$, $\widetilde{\t{NodSplit}}^{\t{tree, coarse}}$ etc. as above.

In the proof of the Geometric Pushout Theorem~\ref{thm:geomthm} we will encounter the following new kind of moduli space. We single out the definition before the proof, for the convenience of the reader. 

\begin{definition}
  Define 
  \[
  \widetilde{\t{NodSplit}}^{\t{tree}}_{\t{protected}}
  \] 
  to be the moduli space of split nodal surfaces with dual graph a tree (with half-edges) and such that every nodal component is a nodal annulus. We call such surfaces \emph{tree-like and protected}. 
\end{definition}

The idea of the definition is that every node has to be ``protected'' on two sides by a pair of seams.

\begin{figure} [ht]
\centering
\input{split-surface.pstex_t}
\caption{Tree-like split structure on a nodal surface, together with its dual graph. It becomes ``protected" by adding one seam around the node $N$ on the trivalent component.}
\label{fig:split-surface}
\end{figure}

\begin{proof}[Proof of the Geometric Pushout Theorem~\ref{thm:geomthm}] 
Consider the diagram 
\[
\widetilde{\mathrm{NodAnn}} \longleftarrow \widetilde{\mathrm{Ann}} \longrightarrow \widetilde{\mathrm{Fr}}_\partial.
\] 
Recall from~\S\ref{sec:pushout_operads} the definition of its pushout 
\[
P\simeq \t{Free}\big(\widetilde{\mathrm{Fr}}_\partial\sqcup \widetilde{\mathrm{NodAnn}}\big) /  \sim,
\]
where $\sim$ is the equivalence relation generated by relations $\sim_1\sqcup \sim_2$. 

As in Lemma~\ref{lem:from_free_to_seamed} there is a tautological gluing map
\[
\t{Free}\big(\widetilde{\mathrm{Fr}}_\partial\sqcup \widetilde{\mathrm{NodAnn}})\to \widetilde{\t{NodSplit}}^{\t{tree, coarse}}.
\] 
The preimage of this map over a given split surface $(\Sigma,S)$ consists of all possible choices of decorating the vertices of the dual graph of $(\Sigma, S)$ by the corresponding point of 
$
\widetilde{\mathrm{NodAnn}}
$ 
or  
$
\widetilde{\mathrm{Fr}}_\partial.
$ 
Thus, in order for a split surface $(\Sigma,S)$ to have nonempty preimage, connected components of $\Sigma\setminus S$ must be either smooth surfaces or nodal annuli. Components indexed by non-nodal annuli can be labeled either way and contribute to the ambiguity of the lifting. It is precisely this ambiguity that is resolved by the relation $\sim_2$, in a way that is compatible with the topology as it identifies connected components in their entirety. Thus the map to 
$\widetilde{\t{NodSplit}}^{\t{tree, coarse}}$ above factors as 
\begin{equation} \label{eq:from_free_to_protected}
\xymatrix{
\frac{\t{Free}\big(\widetilde{\mathrm{Fr}}_\partial\sqcup \widetilde{\mathrm{NodAnn}})}{\sim_2} \ar[r] \ar[dr]_\simeq & 
\widetilde{\t{NodSplit}}^{\t{tree, coarse}}, \\
& \widetilde{\t{NodSplit}}^{\t{tree}}_{\t{protected}} \ar@{^(->}[u]
}
\end{equation}
where it maps homeomorphically the partial quotient to the target $\widetilde{\t{NodSplit}}^{\t{tree}}_{\t{protected}}$, which is in turn a union of connected components of $\widetilde{\t{NodSplit}}^{\t{tree}}$ consisting of split nodal curves with dual graph a tree, and such that each nodal component is a nodal annulus. (Note that as protected split curves are glued out of smooth framed curves and nodal annuli, neither of which have automorphisms, there is no need to take the coarse space here.)

Consider now the map 
\begin{equation}\label{eq:from_protected_to_unprotected}
\widetilde{\t{NodSplit}}^{\t{tree}}_{\t{protected}}\to \widetilde{\t{NodFr}}_\partial^{\t{tree, coarse}}
\end{equation}
defined by erasing the seams. Note that erasing a seam which is a common boundary component of two framed surfaces in $\widetilde{\t{Fr}}_\partial$ corresponds precisely to gluing, i.e., composition in the operad $\widetilde{\t{Fr}}_\partial$. Similarly, erasing a seam which is a common boundary component of two nodal annuli creates an unstable component which must be further discarded, and this corresponds again to gluing, i.e., composition in the operad $\widetilde{\t{NodAnn}}$. 

It thus follows that the above map is constant along the equivalence classes defined by relation $\sim_1$, which identifies pairs of points inside $\widetilde{\t{NodSplit}}^{\t{tree}}_{\t{protected}}$ which are related by removing a single seam (note that such a seam must either be between two nodal annuli or between two smooth framed surfaces). On the level of sets, it is clear that $\sim_1$ identifies any two points in $\widetilde{\t{NodSplit}}^{\t{tree}}_{\t{protected}}$ which correspond to splittings of the same nodal curve. We turn this intuition into a precise topological colimit argument as follows.

Given a tree-like nodal surface $\Sigma$ with $k$ nodes and given mutually disjoint neighborhoods $\mathcal{V}_i$, $i=1,\dots,k$ of its nodes, denote $\mathcal{V}=\sqcup_{i=1}^k \mathcal{V}_i$ and write $\t{Split}_\Sigma^{\mathcal{V}}\subset \t{Split}_\Sigma^{\t{tree}}$ for those tree-like split surface structures on $\Sigma$ whose seams lie away from $\mathcal{V}$. Since seams are not allowed to pass through nodes, these spaces filter $\t{Split}_\Sigma^{\t{tree}}$ as $\mathcal{V}$ runs over a neighborhood basis of the nodes of $\Sigma$. Now write $(\Sigma,S_{\mathcal{V}})\in \t{Split}_\Sigma^{\t{tree}}$ for a splitting given by a collection of $2k$ circles parametrized in some analytic fashion and with images contained in $\mathcal{V}$, such that each neighborhood $\mathcal{V}_i$ of a node contains exactly two such circles, one on each irreducible component adjacent to the node. Then every element in $\t{Split}_\Sigma^{\mathcal{V}}$ is identified (in a way consistent with the topology) with $(\Sigma, S_{\mathcal{V}})$ via $\sim_1$. Further, for $\mathcal{V}'\subset \mathcal{V}$ we have $(\Sigma, S_{\mathcal{V}'})\sim (\Sigma, S_{\mathcal{V}})$: indeed, by $\sim_1$ used for $\widetilde{\t{NodAnn}}$ they are both equivalent to the split surface $(\Sigma,S_{\mathcal{V}''})$ for some sufficiently small neighborhood $\mathcal{V}''$ which does not intersect $S_{\mathcal{V}}\cup S_{\mathcal{V}'}$.

We have thus proved that the fiber of the map~\eqref{eq:from_protected_to_unprotected} is given by the equivalence classes with respect to $\sim_1$.  As a consequence, the map~\eqref{eq:from_protected_to_unprotected} is a bijection, and because the previous identifications can be performed continuously in a neighborhood of any given nodal surface $\Sigma$, this map is also continuous. Finally, we claim that the map is a homeomorphism. To prove the claim note that, given any tree-like nodal split surface $(\Sigma,S)$, its image is $\Sigma$ with the \emph{same} analytic parametrization of the boundary. Thus, in order to prove the claim, it is enough to prove that the induced map on the moduli spaces of surfaces with one marked point on each boundary component is a homeomorphism. This holds true because it is a continuous bijection (just like~\eqref{eq:from_protected_to_unprotected}), with a Hausdorff source and a locally compact target. We infer that the map~\eqref{eq:from_protected_to_unprotected} is a homeomorphism as claimed. (The reduction to moduli spaces of surfaces with one marked point on each boundary component, which gets rid of the infinite dimensional degrees of freedom given by the analytic parametrization, was necessary precisely in order to place ourselves in a setup with locally compact target.) 

Together with the homeomorphism~\eqref{eq:from_free_to_protected}, we obtain a homeomorphism
\[
\widetilde{\mathrm{Fr}}_\partial \sqcup_{\widetilde{\mathrm{Ann}}}\widetilde{\mathrm{NodAnn}}\cong \widetilde{\mathrm{NodFr}}_\partial^{\t{tree, coarse}}.
\] 
\end{proof}

\section{Model Categories and Homotopy (Co)limits}  \label{sec:models}

Our references for this section are Lurie~\cite[Appendix~A.2]{lurie-htt}, May-Ponto~\cite{may-ponto}, Hovey~\cite{hovey}, Hirschhorn~\cite{hirschhorn} and Ginot~\cite{ginot}.

\subsection{Model category theory}  \label{sec:model_cats}
Suppose that $\cC$ is a category and $I$ is a class of morphisms in $\cC$ ``to be inverted''. We say that $I$ is a \emph{class of weak equivalences} if the following conditions are satisfied:
\begin{itemize}
\item{\sc (category structure).} The objects of $\cC$ with the morphisms in $I$ form a subcategory. 
\item{\sc (2 out of 3).} Given any commutative diagram
\[
\xymatrix{
  & A \ar[d] \ar[dl]\\
  B \ar[r] &C
}
\]
with two of the three morphisms in $I$, the third is also in $I$. 
\end{itemize}
Note that the first axiom is sometimes replaced by an identity axiom, as composition compatibility is part of the {\sc (2 out of 3)} axiom. Now given a class of weak equivalences, one would like to produce a ``localized'' category in which these are inverted, i.e., a category $\cC_I$ with a functor $\cC\to \cC_I$ such that the image of any morphism in $I$ is invertible, and which is initial---up to taking care of set-theoretic issues---among such categories. Modulo some set-theoretic difficulties such a $\cC_I$ can be proven to exist. In fact, when $\cC$ is an ordinary category, the localization $\cC_I$ comes naturally as the set of connected components of morphism spaces in a simplicial category, which should be considered in the context of $\infty$-category theory.

The problem is that for a general class $I$ of weak equivalences, the localization $\cC_I$ (whether as a category or a simplicial category) is incredibly difficult to access. In particular, it is hopeless to calculate $\hom_{\cC_I}(X,Y)$ for two objects $X, Y$ of $\cC$. In order to turn $\cC_I$ into a manageable object, it is necessary to endow $\cC$ with some additional data. One remarkably elegant and versatile solution is to exhibit a so-called model category structure. A \emph{model category structure} consists in endowing $\cC$ with two new classes of morphisms called \emph{fibrations}, $P$, and \emph{cofibrations}, $Q$, such that the objects of $\cC$ with either $P$ or $Q$ form subcategories of $\cC$. 
We call the elements of $I\cap P$ \emph{trivial fibrations}, and the elements of $I\cap Q$ \emph{trivial cofibrations}. 
The category $\cC$ together with the classes $I, P, Q$ need to satisfy a collection of conditions among themselves, for which we refer the reader to \cite[\S3]{hinich}. Some conditions that we will use here are as follows.
\begin{enumerate}
\item The category $\cC$ has an initial object, $\emptyset$, a final object, $\text{pt},$ and all finite limits and colimits.
\item For any morphism $X\xrightarrow{f} Y$ of objects, there is a ``fibrant factorization'' $X\xrightarrow{i} X'\xrightarrow{f'}Y$ such that $i\in I \cap Q$ is a trivial cofibration and $f'\in P$ is a fibration.
\item Similarly, for any morphism $X\xrightarrow{f}Y$ of objects, there is a ``cofibrant factorization'' $X\xrightarrow{f'}X'\xrightarrow{j}Y$ such that $f'\in Q$ is a cofibration and $j\in I\cap P$ is a trivial fibration. 
\item All three categories $P, Q, I$ are closed with respect to taking retracts of morphisms.
\item Given the subcategories $I$ of weak equivalences and $Q$ of cofibrations (resp., the subcategory $P$ of fibrations), the subcategory $P$ of fibrations (respectively, $Q$ of cofibrations) is uniquely characterized by a lifting property.
\end{enumerate}
Note that neither cofibrant nor fibrant factorization is required to be functorial, though there often is a functorial choice (in fact, there is a sense in which the choice is unique up to homotopy). 
If a map $X\xrightarrow{f} Y$ is a fibration we write shorthand
\[X\xtwoheadrightarrow{f}Y,\]
and similarly if $X\xrightarrow{f} Y$ is a cofibration we write
\[X\xhookrightarrow{f}Y.\]
If $X\xrightarrow{f}Y$ is an equivalence we write 
$X\xrightarrow[\sim]{f} Y$, with evident compound meanings for $X\xhookrightarrow[\sim]{f} Y$ 
(trivial cofibration) 
and $X\xtwoheadrightarrow[\sim]{f} Y$ 
(trivial fibration). 

\subsection{The homotopy category}  \label{sec:homotopy_category}
Suppose that $\cC$ is a category with weak equivalences $I$ and model structure $P, Q$. 
\begin{itemize}
\item We say that an object $X$ is \emph{fibrant} if the map $X\to \pt$ to the terminal object is a fibration. 
\item We say that an object $X$ is \emph{cofibrant} if the map $\emptyset\to X$ is a cofibration. 
\end{itemize}
Note that, by applying a suitable factorization axiom to the map $\emptyset \to X$ or $X\to \pt$, every object $X$ admits a trivial cofibration to a fibrant object, $X\xhookrightarrow{\sim} X_P\xtwoheadrightarrow{} \pt$, and a trivial fibration from a cofibrant one, $\emptyset \xhookrightarrow{} X_Q\xtwoheadrightarrow{\sim} X$. We call $X_P$, resp. $X_Q$, a \emph{fibrant, resp. cofibrant, replacement of $X$}. The fibrant, resp. cofibrant, replacements are in general not canonical, but in many situations of interest they can be chosen to be functorial. The $W$-construction discussed in~\S\ref{sec:W} provides such a functorial cofibrant replacement for operads. 

Let $X\sqcup X\xrightarrow{\one\sqcup \one} X$ be the codiagonal map, and $X\sqcup X\xhookrightarrow{}  C_X\xtwoheadrightarrow{\sim} X$ a factorization. Any such object $C_X$ is called \emph{a cylinder object} for $X$. It admits a trivial fibration $C_X\xtwoheadrightarrow{\sim} X$ and two cofibrations $X\xhookrightarrow{i_0, i_1} C_X$, which are also weak equivalences by the {\sc (2 out of 3)} axiom. 

Similarly, let $\Delta:X\to X\times X$ be the diagonal map, and $X\xhookrightarrow{\sim} P_X\xtwoheadrightarrow{} X\times X$ a factorization. Any such object $P_X$ is called \emph{a path object} for $X$. It admits a trivial cofibration $X\xhookrightarrow{\sim} P_X$ and two fibrations $P_X\xtwoheadrightarrow{p_0, p_1} X$, which are also weak equivalences by the {\sc (2 out of 3)} axiom.
\begin{definition}
  Write $\cC_P, \cC_Q, \cC_{QP}$ for the full subcategories of $\cC$ consisting of fibrant, cofibrant, and fibrant-cofibrant objects, respectively. 
\end{definition}
\begin{definition}
  Suppose that $f, g:X\to Y$ is a pair of maps, and choose a cylinder object $C_X$ and a path object $P_Y$. 
  \begin{itemize}
  \item $f$ and $g$ are \emph{left homotopic} if the map $f\sqcup g: X\sqcup X\to Y$ factors through $C_X$ as 
  $$
  X\sqcup X\xhookrightarrow{i_0\sqcup i_1} C_X\xrightarrow{h} Y
  $$ 
  for some choice of map (``homotopy'') $h$. 
  \item $f$ and $g$ are \emph{right homotopic} if the map $X\xrightarrow{f\times g} Y\times Y$  
  factors through $P_Y$ as 
  $$
  X\xrightarrow{k} P_Y\xtwoheadrightarrow{} Y\times Y
  $$ 
  for some choice of map (``cohomotopy'') $k$.
 \end{itemize}
\end{definition}
\begin{lemma}[\cite{quillen}, {\cite[Proposition~1.2.5]{hovey}}]\qquad 

  If $X$ is cofibrant (and $Y$ arbitrary), the relation $\sim_L$ of left homotopy equivalence on $\hom(X, Y)$ is an equivalence relation, and does not depend on the choice of cylinder object $C_X.$ 
  
  If $Y$ is fibrant (and $X$ is arbitrary), the relation $\sim_R$ of right homotopy equivalence on $\hom(X, Y)$ is an equivalence relation and does not depend on choice of path object $P_Y.$ 
  
  If $X$ is fibrant and $Y$ is cofibrant, then the two equivalence relations $\sim_L$ and $\sim_R$ on $\hom(X,Y)$ are the same. 
\end{lemma}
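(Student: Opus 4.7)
The plan is to follow Quillen's classical strategy: break the statement into a sequence of lifting arguments that exploit the factorization axioms, treating $\sim_L$ first and then reducing $\sim_R$ to it via the adjoint symmetry between cylinders and path objects. Reflexivity of $\sim_L$ is immediate: for any cylinder $X\sqcup X \hookrightarrow C_X \xrightarrow{p} X$ with $p$ an acyclic fibration, the composition $fp: C_X \to Y$ witnesses $f \sim_L f$.

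The crucial technical step is independence of the cylinder object. Given two cylinders $C_X$ and $C_X'$, cofibrancy of $X$ implies that $X \sqcup X \hookrightarrow C_X$ is a cofibration (as a pushout of $\emptyset \hookrightarrow X$), and two-out-of-three applied to the factorization of the codiagonal shows that each of $i_0, i_1: X \to C_X$ is an acyclic cofibration. A lift of $X \sqcup X \to C_X'$ against the acyclic fibration $C_X \twoheadrightarrow X$ then produces a comparison morphism $C_X \to C_X'$ intertwining the inclusions, which transports homotopies between cylinders. Symmetry of $\sim_L$ follows, since swapping the inclusions of a cylinder produces another cylinder with respect to which the original homotopy runs in reverse. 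Transitivity is handled by forming the pushout $C_X \sqcup_X C_X'$ along the copy of $X$ shared by the endpoints to be identified: the outer inclusions of $X$ remain cofibrations (pushouts of cofibrations), and the induced map to $X$ is an acyclic fibration by the universal property and two-out-of-three, so the pushout is a cylinder object on which the two given homotopies glue into one.

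For the identification of $\sim_L$ and $\sim_R$, under the standard hypotheses $X$ cofibrant and $Y$ fibrant, suppose $h: C_X \to Y$ witnesses $f \sim_L g$. Choose a path object $Y \xrightarrow{s} P_Y \xrightarrow{(p_0,p_1)} Y \times Y$ with $s$ an acyclic cofibration and $(p_0, p_1)$ a fibration. The pair $(fp, h): C_X \to Y \times Y$ (where $p: C_X \to X$ is the acyclic fibration from the cylinder factorization) together with the composite $sf: X \to P_Y$ form a commutative square whose left-hand side is the acyclic cofibration $i_0: X \to C_X$ and whose right-hand side is the fibration $P_Y \to Y \times Y$. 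A lift $\ell: C_X \to P_Y$ exists, and its restriction along $i_1$ produces a map $X \to P_Y$ realising $f \sim_R g$. The reverse implication is dual, noting that fibrancy of $Y$ ensures the projections $p_0, p_1: P_Y \to Y$ are acyclic fibrations.

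The main obstacle is the independence-of-cylinder step, since both symmetry and transitivity of $\sim_L$, as well as the comparison with $\sim_R$, bootstrap from it together with lifting axioms. The subtle point throughout is verifying that the specific maps one wishes to lift are genuine (co)fibrations or weak equivalences: establishing that $i_0, i_1: X \to C_X$ are acyclic cofibrations uses cofibrancy of $X$ together with two-out-of-three applied to $C_X \twoheadrightarrow X$, and the transitivity step relies essentially on closure of cofibrations under pushout.
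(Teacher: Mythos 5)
The paper gives no proof of its own here (the lemma is cited to Quillen and to Hovey, Proposition~1.2.5), so I am evaluating your argument against the classical one, which it clearly tracks. The genuine gap is in the transitivity step. You assert that the pushout $C_X\sqcup_X C_X'$ (gluing $i_1$ to $i_0'$) is itself a cylinder object, but neither claim you make to justify this holds as stated. Two-out-of-three shows the induced map $C_X\sqcup_X C_X'\to X$ is a \emph{weak equivalence}, not an acyclic fibration, so the second half of the cylinder factorization is not supplied. And the observation that each of the two inclusions $X\to C_X\sqcup_X C_X'$ is a cofibration (being a composite of a pushout of a cofibration with a cofibration) does not by itself make the map out of the \emph{coproduct} $X\sqcup X$ a cofibration; that requires an additional argument, e.g.\ a Reedy/cube-type lemma applied to the map of spans $(X\sqcup X\leftarrow X\to X\sqcup X)\to(C_X\leftarrow X\to C_X')$, which you do not give. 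The standard repair, present in both sources and missing from your write-up, is to factor $X\sqcup X\to C_X\sqcup_X C_X'$ as a cofibration followed by an acyclic fibration; the intermediate object is a genuine cylinder object by two-out-of-three, and the concatenated homotopy factors through the acyclic fibration, exhibiting $f\sim_L h$.

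Two smaller slips deserve correction. Cofibrancy of $X$ is not what makes $X\sqcup X\rInto C_X$ a cofibration (that is part of the definition of a cylinder object); it is what makes each coproduct inclusion $X\to X\sqcup X$, and hence each $i_0,i_1\colon X\to C_X$, a cofibration, which combined with two-out-of-three gives the acyclic cofibrations you want. In the comparison-of-cylinders step, the acyclic fibration against which you lift should be $C_X'\twoheadrightarrow X$, not $C_X\twoheadrightarrow X$: the lifting square has the cofibration $X\sqcup X\rInto C_X$ on the left and $C_X'\twoheadrightarrow X$ on the right, producing $C_X\to C_X'$. The $\sim_L\Leftrightarrow\sim_R$ argument via the square with $i_0\colon X\to C_X$ on the left and $P_Y\twoheadrightarrow Y\times Y$ on the right is correct, and you rightly invoke the hypotheses ``$X$ cofibrant, $Y$ fibrant''; note that the lemma as printed in the paper has these transposed.
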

\begin{definition}
  The category $Ho_\cC$ is the category with objects $\cC_{QP}$ and morphisms $\hom_{Ho_\cC}(X, Y)$ defined as the quotient of $\hom_{\cC}(X, Y)$ by left (or, equivalently, right) homotopy equivalence.
\end{definition}
\begin{theorem}[\cite{quillen}, {\cite[Theorem~1.2.10]{hovey}}]
  The homotopy category $Ho_\cC$ is canonically equivalent to the localized category $\cC[I^{-1}].$ 
\end{theorem}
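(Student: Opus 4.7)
The plan is to verify that the inclusion $\cC_{QP}\hookrightarrow\cC$, composed with the localization $\cC\to\cC[I^{-1}]$, identifies left/right-homotopic maps and thereby descends to a functor $\Phi:Ho_\cC\to\cC[I^{-1}]$; and conversely to construct a functor $\Psi:\cC[I^{-1}]\to Ho_\cC$ via iterated cofibrant-then-fibrant replacement. The heart of the argument is then showing $\Phi$ and $\Psi$ are mutually inverse, which reduces to the classical Whitehead-type theorem in this setting: \emph{a weak equivalence between fibrant-cofibrant objects is a homotopy equivalence}.

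First I would handle descent of $\Phi$. Given $f\sim_L g:X\to Y$ with $X$ cofibrant and $Y$ fibrant, choose a cylinder $X\sqcup X\rInto C_X\rOnto{\sim}X$ with inclusions $i_0,i_1:X\hookrightarrow C_X$ and a homotopy $h:C_X\to Y$. Both $i_0,i_1$ are acyclic cofibrations (by the two-out-of-three axiom applied to $C_X\to X$), hence become isomorphisms in $\cC[I^{-1}]$; together with the commuting triangles $h\circ i_0=f$, $h\circ i_1=g$, they force $f=g$ in $\cC[I^{-1}]$. Thus $\Phi$ is well-defined on morphisms.

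Next I would build $\Psi$. For each object $X$ choose (non-canonically) a cofibrant replacement $\emptyset\rInto X_Q\rOnto{\sim}X$, then a fibrant replacement $X_Q\rInto{\sim}X_{QP}\rOnto\pt$; on objects set $\Psi(X):=X_{QP}$. To define $\Psi$ on morphisms $f:X\to Y$, lift $f$ to a map $\tilde f:X_{QP}\to Y_{QP}$ using the lifting properties: first lift $X_Q\to X\xrightarrow{f} Y$ through the acyclic fibration $Y_Q\twoheadrightarrow Y$ (using cofibrancy of $X_Q$), then lift the resulting map into $Y_{QP}$ through $Y_{QP}\twoheadrightarrow\pt$ (or rather, compose with $Y_Q\hookrightarrow Y_{QP}$). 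Any two such lifts are homotopic via a straightforward cylinder argument (apply the same lifting to $C_{X_{QP}}$ in place of $X_{QP}$), so $\Psi$ is well-defined into $Ho_\cC$, and standard bookkeeping shows it is a functor and inverts weak equivalences; hence it factors through $\cC[I^{-1}]$.

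The main obstacle is checking $\Phi\circ\Psi\simeq\id$ and $\Psi\circ\Phi\simeq\id$, both of which hinge on the Whitehead-type statement: if $w:X\to Y$ is a weak equivalence between fibrant-cofibrant objects, then $w$ admits a two-sided homotopy inverse. I would prove this by first factoring $w=p\circ j$ as an acyclic cofibration followed by an acyclic fibration and reducing to the two cases separately. For an acyclic fibration $p:Z\twoheadrightarrow Y$ with $Y$ cofibrant, lifting $\id_Y$ against $\emptyset\hookrightarrow Y$ provides a section $s$ with $ps=\id_Y$; for the homotopy $sp\sim\id_Z$, lift the square with top $\id_Z\sqcup sp:Z\sqcup Z\to Z$ and bottom $p\circ\pr:C_Z\to Y$ through $p$, using that $Z\sqcup Z\hookrightarrow C_Z$ is a cofibration and $p$ is an acyclic fibration. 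The dual lifting gives the acyclic cofibration case using a path object. With this in hand, for every $X$ the canonical chain of weak equivalences linking $X_{QP}$ to $X$ (at the fibrant-cofibrant level given by functorial zig-zags between choices of replacements) yields the required natural isomorphism after passing to $Ho_\cC$, which closes the proof.
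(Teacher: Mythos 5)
The paper does not supply its own proof of this statement---it is quoted from Quillen and Hovey \cite[Theorem~1.2.10]{hovey} as background---so the comparison is with the standard textbook argument, which your proposal reproduces essentially faithfully: descent of $\Phi$ via contracting the cylinder inclusions to isomorphisms in $\cC[I^{-1}]$, construction of $\Psi$ by cofibrant-then-fibrant replacement with lifts, and the Whitehead lemma (weak equivalences between fibrant-cofibrant objects are homotopy equivalences) proved by factoring as acyclic cofibration followed by acyclic fibration and handling each by a lifting argument. This is the right approach and the key computations (the commuting square in the section/retraction step, the equality $i_0=i_1=p^{-1}$ in the localization) are correct.

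One small imprecision worth flagging in the closing paragraph: the intermediate object $X_Q$ in the zig-zag $X\xleftarrow{\sim} X_Q\xrightarrow{\sim} X_{QP}$ is cofibrant but in general \emph{not} fibrant, so the chain is not ``at the fibrant-cofibrant level'' and one cannot directly invoke Whitehead on each leg. The standard fix is to note that, since $X$ is fibrant and $X_Q\rInto{\sim} X_{QP}$ is an acyclic cofibration, the weak equivalence $X_Q\rOnto{\sim} X$ lifts to a single weak equivalence $X_{QP}\to X$ between fibrant-cofibrant objects, which Whitehead then upgrades to a homotopy equivalence; naturality follows from the uniqueness-up-to-homotopy of such lifts. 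Also, your remark that \emph{both} $\Phi\Psi\simeq\id$ and $\Psi\Phi\simeq\id$ ``hinge on Whitehead'' overstates it slightly: $\Phi\Psi\simeq\id$ in $\cC[I^{-1}]$ needs only that the replacement zig-zag consists of weak equivalences, hence inverts automatically; Whitehead is genuinely needed for $\Psi\Phi\simeq\id$ and for the well-definedness of $\Psi$ on $\cC[I^{-1}]$. Neither of these is a gap in substance---they are points a careful write-up would tighten.
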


\begin{remark}
  Recall that, given a ring $A$ with a localizing set of elements $I$, there is a condition on $I$ called the \emph{left (resp., right) Ore condition} which allows one to write down the localization $A[I^{-1}]$ as the ring of fractions $i^{-1}f$ (resp., $fi^{-1}$) for $i\in I$. Similarly, given a category $\cC$ there is a notion of left (resp., right) Ore condition, which is part of a so-called ``calculus of fractions" on $\cC$~\cite[A.2.1.11(h)]{johnstone-vol1}. If the left Ore condition is satisfied then the category $\cC[I^{-1}]$ can be expressed as the category of objects of $\cC$ with morphisms $X\to Y$ represented by ``roofs'' $X\xrightarrow{f} Z\xleftarrow[\sim]{g}Y,$ with $Z$ arbitrary and $g$ a weak equivalence, subject to a straightforward equivalence relation determined by diagrams of maps commuting with a weak equivalence $Z'\xrightarrow{\sim} Z$. If $\cC$ is a model category then the category of cofibrant objects and maps up to left homotopy satisfies the left Ore condition with quotient $Ho_\cC$, and the category of fibrant objects and maps up to right homotopy satisfies the right Ore condition with quotient $Ho_\cC.$ 
\end{remark}
\subsection{Some important model categories}
We will give a few examples of model category structures on simplicial sets, topological spaces and differential complexes that will be important to us. Recall that in order to define a model structure, it suffices to specify just two classes of morphisms: either weak equivalences and fibrations, or weak equivalences and cofibrations. The third class is then determined by a lifting property. 

\subsubsection{Model category structure on simplicial sets} \label{sec:model-SSet}
Let $\t{SSet}$ be the category of simplicial sets. A map of simplicial sets $f:X\to Y$ is called a \emph{weak homotopy equivalence} if it induces a weak homotopy equivalence between geometric realizations $|f|:|X|\to |Y|$.  We denote by $\operatorname{WE}$ the class of weak homotopy equivalences. We say that $f$ is a \emph{Kan fibration} if it has the right lifting property with respect to the inclusions of all horns $\Lambda^n_k\hookrightarrow \Delta^n$, $n\ge 0$, $0\le k\le n$. Here $\Lambda^n_k$ is the simplicial subset of $\Delta^n$ obtained by removing the nondegenerate $n$-simplex and the face opposite to the $k$-th vertex.
\begin{theorem}[{Quillen model structure~\cite[II.3, Theorem~3]{quillen}}]
There is a model structure on the category $\t{SSet}$ with weak equivalences given by $\operatorname{WE}$ and fibrations given by Kan fibrations. The cofibrations are the maps of simplicial sets that are degree-wise inclusions. In particular, any simplicial set is a cofibrant object.  
\end{theorem}
Other references for this foundational theorem are~\cite[Chapter~I, Theorem~11.3]{goerss-jardine}, \cite[Chapter~3]{hovey}, \cite[Chapter~8, Theorem~8.19]{heuts-moerdijk}.

\subsubsection{Model category structures on topological spaces}
Let $\Top$ be the category of 
compactly generated weakly Hausdorff topological spaces, see~\cite{strickland}, \cite[Definition~2.4.21, Theorem~2.4.25]{hovey}, \cite{nlab:stroem_model_structure}, \cite[\S6.4]{may_concise}. 
Recall that a map $f:X\to Y$ is a homotopy equivalence if it admits a homotopy inverse, and is a weak homotopy equivalence if it is a bijection on path-connected components and, for any $x\in X$, the map $\pi_n(X, x)\to \pi_n(Y, f(x))$ is an isomorphism for each $n\ge 1$. Any homotopy equivalence $X\to Y$ is a weak homotopy equivalence; the converse is true provided $X,Y$ are CW complexes, but not true in general. Both homotopy equivalences and weak homotopy equivalences clearly satisfy the conditions required to define a class of weak equivalences. We denote by $\operatorname{WE}$ the class of weak homotopy equivalences. 
\begin{theorem}[{Quillen model structure~\cite[II.2, Theorem~1]{quillen}}]
There is a model structure on the category $\top$ with weak equivalences given by $\operatorname{WE}$ and fibrations given by Serre fibrations. A space is cofibrant in this model structure if and only if it is a retract of a relative CW complex, and any space is fibrant. \qed
\end{theorem}
\begin{theorem}[{Str\o m model structure~\cite{strom}}]
  There is a model structure on the category $\top$ with weak equivalences given by homotopy equivalences and with fibrations given by Hurewicz fibrations. The cofibrations are retracts of Hurewicz cofibrations with closed image, and in particular any space is cofibrant. Also, any space is fibrant. 
\end{theorem}
\begin{theorem}[{Mixed model structure, Cole~\cite{cole}, see also~\cite[\S17.3-4]{may-ponto}}]
   There is a model structure on the category $\top$, called \emph{mixed} model structure, with weak equivalences given by $\operatorname{WE}$ and fibrations given by Hurewicz fibrations. A space is cofibrant in the mixed model structure if and only if it is homotopy equivalent to a CW complex, and any space is fibrant. 
 \end{theorem}

\subsubsection{Chain complexes}
Let $k$ be a ring (e.g. $k=\Z$ or $k=\Q$). Then the categories $C(k)$ (resp. $C_+(k)$) 
of chain complexes of $k$-modules (resp. supported in non-negative degrees) have model structures with weak equivalences given by quasi-isomorphisms and fibrations given by maps of complexes which are term-wise surjective (resp. in all positive degrees). In particular all objects are fibrant. Cofibrant objects in $C_+(k)$ are term-wise projective complexes of $k$-modules (see~\cite[Remark~2.3.7]{hovey} for a discussion of cofibrant objects in $C(k)$). This is called the \emph{standard} or \emph{projective} model structure on the category of chain complexes~\cite[\S2.3]{ginot}, \cite[\S18.4-5]{may-ponto}, \cite[\S2.3]{hovey}.

\subsection{Quillen adjunction}  \label{sec:Quillen}
It is a natural question to ask when a functor of model categories induces a functor of homotopy categories, and when this functor is a weak equivalence. (The functor most interesting for us will be the functor of chains from topological operads up to weak homotopy equivalence to dg operads up to quasi-isomorphism.) A convenient condition on a functor $F:\cC\to \cD$ of model categories that guarantees (in a functorial way) a functor on homotopy categories is the notion of so-called \emph{Quillen adjunction}.
\begin{definition}
  A functor $F:\cC\to \cD$ between model categories is a \emph{left Quillen functor} if it admits a right adjoint $G:\cD\to \cC$ such that $F$ preserves cofibrations and trivial cofibrations and $G$ preserves fibrations and trivial fibrations. In this situation we call $G$ a \emph{right Quillen functor}, and the adjunction $(F,G)$ is called a \emph{Quillen adjunction}.  
\end{definition}
Quillen adjunctions induce pairs of adjoint (in the conventional sense) functors between homotopy categories: one gets 
$$
ho_F:Ho_\cC\leftrightarrows Ho_\cD:ho_G,
$$ 
defined by applying $F$, resp. $G$ to fibrant-cofibrant representatives (in fact, it is sufficient to apply $F$ to a fibrant representative and $G$ to a cofibrant representative to get the correct functor on homotopy categories). These should be thought of as the left (resp. right) derived functors of $F$ (resp. $G$). A Quillen adjunction is called a  \emph{Quillen equivalence} if $ho_F$ (equivalently, $ho_G$) is an equivalence on homotopy categories.

The primordial Quillen adjunction, and one that will be important in this paper, is the adjunction 
$$
C_*:\top\leftrightarrows C_+(\Z):|\cdot |.
$$ 
Here we define $C_*(X)$ for $X\in\top$ to be the complex of singular chains on $X$. Its adjoint $|\cdot |$ is given by taking the geometric realization of an associated simplicial set. This adjunction can be written as a composition 
\[
\Top \leftrightarrows \t{SSet} \leftrightarrows \t{SAb} \leftrightarrows C_+(\Z). 
\]
The first one associates to a topological space its singular simplicial set, and to a simplicial set its geometric realization, and is a Quillen equivalence. The second one is the free-forgetful adjunction between simplicial sets and simplicial Abelian groups, which is not a Quillen equivalence. The third one is the Dold-Kan correspondence, and is an equivalence of categories. See~\cite[\S II.3]{quillen}, \cite[Cor.~3.2.15, Th.~3.4.4]{ginot} and~\cite[\S8.4]{weibel}.

\subsection{Homotopy (co)limits}  \label{sec:hocolim}
Our sources for this section are Ginot~\cite[\S2.5]{ginot}, Dwyer and Spalinski~\cite{dwyer-spalinski}, and Hirschhorn~\cite[Chapter~13]{hirschhorn}.
  
Suppose $\cC$ is a model category which is cocomplete, i.e., it has all small colimits. Let $J$ be a small ``diagram'' category, which we are interested in mapping to $\cC$. The functor category
\[\cC^J: = \fun(J, \cC)\]
inherits a natural notion of \emph{weak equivalence}: we say that a natural transformation $F\to G$ of functors $F, G: J\to \cC$ is a weak equivalence if $F(j)\to G(j)$ is such for each object $j\in J$. There are several natural model structures on the diagram category, one of which is the \emph{projective} model structure, with fibrations determined objectwise on a map of diagrams. If $X$ is an object of $\cC$, there is a constant diagram $\underline X$ with every object of $J$ sent to $X$ and every arrow sent to the identity morphism of $X$. This determines a functor $\t{const}: \cC\to \cC^J$. Its left adjoint is by definition the colimit functor:
\footnote{The right adjoint is the limit functor, which is defined when $J$-indexed limits exist, e.g., if $\cC$ is complete.} 
\[
\t{colim}:\cC^J\leftrightarrows \cC : \t{const}.
\]
Assume now that $J$ is given by a poset (more generally, assume $J$ to be \emph{very small} in the sense of~\cite[\S10.13 sqq.]{dwyer-spalinski} or~\cite[D\'efinition~2.5.11 sqq.]{ginot}). The projective structure defines in this case a model category structure on $\cC^J$~\cite{dwyer-spalinski,ginot}. The above adjunction is a Quillen adjunction, and thus induces a functor of associated homotopy categories, called the \emph{homotopy colimit functor}, written 
\[
\operatorname{hocolim}:Ho_{\cC^J}\to Ho_{\cC}.
\] 

We will be primarily interested in calculating homotopy pushouts, i.e. homotopy colimits in the functor category $\cC^J$ with $\cC$ a model category and $J$ the poset 
$$
a \longleftarrow b \longrightarrow c. 
$$
Given an object $X$ in $\cC^J$, the homotopy colimit $\operatorname{hocolim}(X)$ is by definition isomorphic to $\operatorname{colim}(X')$, where $X'$ is a cofibrant object in $\cC^J$ weakly equivalent to $X$. It is shown in~\cite[Proposition~10.6]{dwyer-spalinski} that a pushout diagram $X'$ is cofibrant in the projective model structure if and only if  the maps $X'(a)\longleftarrow X'(b) \longrightarrow X'(c)$ are cofibrations and $X'(b)$ is cofibrant. (Hence the objects $X'(a)$ and $X'(c)$ are also cofibrant.) 

The homotopy colimit of a diagram is well-defined up to equivalence, but giving an explicit model depends on the choice of cofibrant resolution of the diagram.\footnote{When passing from the homotopy category to the richer $\infty$-category language, the category of such choices is contractible, and thus homotopy colimits are unique up to homotopy in a strong sense.}

In certain situations it is possible to compute the homotopy pushout with fewer cofibrant replacements. The next result is stated in~\cite{lurie-htt} as Proposition~A.2.4.4.(i). We will use it in the proof of Proposition~\ref{prop:hocolim2-operads}.
\begin{lemma} \label{lem:hocolim2}
Let $\cC$ be a model category. Given a diagram 
$$A\longleftarrow B\hookrightarrow C$$ 
with $B\hookrightarrow C$ a cofibration and $A, B$ (and hence $C$) cofibrant, we have 
$$
\operatorname{hocolim}(A\longleftarrow B\hookrightarrow C) = \operatorname{colim}(A\longleftarrow B\hookrightarrow C).
$$
\end{lemma}

\begin{proof}
Let $B\hookrightarrow A' \xtwoheadrightarrow{\sim} A$ be a factorization of the map $B\to A$ into a cofibration and a trivial fibration. The diagram $A'\hookleftarrow B\hookrightarrow C$ is then a cofibrant replacement of the initial diagram (see~\S\ref{sec:hocolim})
$$
\xymatrix{
A' \ar@{->>}[d]^\sim & B \ar@{=}[d] \ar@{_(->}[l] \ar@{^(->}[r] & C \ar@{=}[d] \\
A & B \ar[l] \ar@{^(->}[r] & C ,
}
$$
and therefore $\operatorname{hocolim}(A\longleftarrow B\hookrightarrow C) = \operatorname{colim}(A'\hookleftarrow B\hookrightarrow C)$, also denoted $A'\sqcup_B C$. 

We claim that the canonical map $A'\sqcup_B C\to A\sqcup_B C$ is a weak equivalence. This is seen by considering the two pushout squares 
$$
\xymatrix{
B \ar@{^(->}[r] \ar@{^(->}[d] & C \ar@{^(->}[d] \\
A' \ar@{^(->}[r] \ar@{->>}[d]^\sim & A'\sqcup_B C \ar[d]^\sim \\ 
A \ar@{^(->}[r] & A\sqcup_B C .
}
$$
By assumption the maps $B\hookrightarrow A'$ and $B\hookrightarrow C$ are cofibrations. Since cofibrations are stable under pushout (\cite[2.1.12]{ginot}), the maps $A'\hookrightarrow A'\sqcup_B C$ and $C\hookrightarrow A'\sqcup_B C$ are cofibrations. By assumption the map $A'\xtwoheadrightarrow{\sim} A$ is a weak equivalence between cofibrant objects, and a result of Reedy (\cite[Proposition~13.1.2]{hirschhorn}) states that the pushout of a weak equivalence \emph{between cofibrant objects} along a 
cofibration is a weak equivalence. Therefore the map $A'\sqcup_B C\xrightarrow{\sim} A\sqcup_B C$ is a weak equivalence.  
\end{proof}

Call a model category \emph{left proper} if weak equivalences are preserved by pushouts along cofibrations, \emph{right proper} if weak equivalences are preserved by pullbacks along fibrations, and \emph{proper} if it is both left proper and right proper. The Quillen model category structures on $\t{SSet}$ and $\t{Top}$ are proper~\cite[Theorems~13.1.10 and~13.1.13]{hirschhorn}. 

\begin{lemma} \label{lem:left-proper-pushout} Let $\cC$ be a left proper model category. Given a diagram 
$$A\longleftarrow B\hookrightarrow C$$ 
with $B\hookrightarrow C$ a cofibration, we have 
$$
\operatorname{hocolim}(A\longleftarrow B\hookrightarrow C) = \operatorname{colim}(A\longleftarrow B\hookrightarrow C).
$$\qed
\end{lemma}
As a consequence, in a left proper model category the homotopy pushout of a diagram $A\longleftarrow B \longrightarrow C$ is weakly equivalent to the ordinary pushout of the diagram obtained by replacing one arrow by a cofibration. Lemma~\ref{lem:left-proper-pushout} is stated in~\cite[Proposition~A.2.4.4.(ii)]{lurie-htt} and proved in~\cite[Proposition~5.4]{nlab:proper_model_category}.

The previous discussion of homotopy colimits and homotopy pushouts has a dual counterpart for homotopy limits and homotopy pullbacks. The previous results hold true for homotopy pullbacks by reversing the direction of the arrows and exchanging cofibrations and left properness into fibrations and right properness. We will use this in the discussion of homotopy fibers of maps of simplicial sets in Appendix~\ref{sec:reminders-groupoids-SSet}. The dual of Lemma~\ref{lem:left-proper-pushout} is stated and proved in~\cite[Proposition~13.3.7]{hirschhorn}.

\section{The Berger-Moerdijk Model Structure for Operads}  \label{sec:model_category_operads}

\subsection{Existence of model structure} \label{sec:BM}
Suppose that $\cC$ is a symmetric monoidal category with weak equivalences. Then we say that a map of operads $O\to O'$ in $\cC$ is a {\it weak equivalence} if it is so objectwise, i.e., if $O_n\to O'_n$ is a weak equivalence for each $n$. Berger and Moerdijk~\cite{berger_moerdijk_model} show that if $\cC$ is a model category satisfying certain additional conditions, then this notion of weak equivalence is part of a model category structure on operads in $\cC,$ for which the fibrations $O\to O'$ are objectwise fibrations. In particular, they prove the following result. 
\begin{theorem}[{\cite[Theorem~3.2]{berger_moerdijk_model}}]
If $\cC$ is a \emph{cartesian+} closed symmetric monoidal model category, then the category of operads in $\cC$ has a model structure with weak equivalences and fibrations determined levelwise. 
\end{theorem}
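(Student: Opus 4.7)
My plan is to invoke the standard \emph{transfer principle} for model structures along an adjunction, applied to the free-forgetful adjunction $\t{Free}\colon \fS-\t{gr}\,\cC \rightleftarrows \t{Op}_\cC : \t{forg}$ described in \S\ref{sec:free_operad}. On the source, I would endow $\fS-\t{gr}\,\cC$ with the evident product model structure in which a map $X_*\to Y_*$ is a weak equivalence (resp.\ fibration) exactly when each component $X_n\to Y_n$ is a weak equivalence (resp.\ fibration) in $\cC^{\fS_n}$, with the latter created by the forgetful functor $\cC^{\fS_n}\to\cC$. This product structure exists because each $\fS_n$ is a finite group and $\cC$ is assumed cofibrantly generated as part of the ``cartesian+'' hypothesis.

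Then I would apply Kan's transfer theorem to produce the desired model structure on $\t{Op}_\cC$, with fibrations and weak equivalences created by $\t{forg}$. This requires verifying two conditions. (a) The forgetful functor preserves filtered colimits. (b) For every generating acyclic cofibration $j\colon X_* \rInto {\sim} Y_*$ in $\fS-\t{gr}\,\cC$, every pushout in $\t{Op}_\cC$ of $\t{Free}(j)$ along a map $\t{Free}(X_*)\to O$ produces a map $O\to O'$ whose underlying map of $\fS$-collections is a weak equivalence.

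Condition (a) is routine: operadic composition and symmetric actions are finitary data, so colimits of operads over filtered diagrams exist and can be computed arity-wise. The heart of the proof, and the principal obstacle, lies in condition (b). The approach would be to construct an explicit filtration of $O'$ by the combinatorial complexity of freshly adjoined operadic cells, organized by trees as in \S\ref{sec:free_operad_functor}. In arity $n$, each associated graded stratum can be expressed as a colimit, indexed by isomorphism classes of $n$-leaved labeled trees $\tau$, of an $\fS_n$-equivariant tensor expression mixing the mapping cylinder of $j$ at some interior vertices, pieces of $O$ at the remaining vertices, and a quotient by the automorphism group of $\tau$. The ``cartesian+'' hypothesis is crafted precisely so that such colimits of $\fS_n$-equivariant tensor products of acyclic cofibrations remain acyclic cofibrations: ``cartesian'' ensures compatibility of the tensor product with colimits and with (acyclic) cofibrations, while the ``+'' condition, which in Berger--Moerdijk's formulation requires a suitable cofibrant resolution of the unit by an interval with free symmetric action, handles the potentially non-free symmetric group actions coming from tree automorphisms.

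Once conditions (a) and (b) are in hand, the transfer theorem supplies the model structure on $\t{Op}_\cC$; two-out-of-three, closure of each class under retracts, and the factorization/lifting axioms follow formally once cofibrations are defined by the left lifting property against acyclic fibrations. I anticipate that the main obstacle will be not the abstract adjunction machinery but the concrete combinatorial analysis needed to verify (b): setting up the tree filtration of pushouts of operads, showing inductively that each stratum is an acyclic cofibration of $\fS$-collections, and invoking the ``cartesian+'' hypothesis at every induction step. This analysis is the technical heart of \cite{berger_moerdijk_model}, and the ``+'' refinement is essential precisely to handle the strata whose underlying symmetric group actions fail to be free.
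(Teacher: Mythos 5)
The theorem you are asked to prove is quoted from Berger--Moerdijk and the paper offers no proof of its own, so the comparison is with their argument. Your framework -- transfer the model structure along the free-forgetful adjunction $\t{Free}\colon \fS\text{-}\t{gr}\,\cC \rightleftarrows \t{Op}_\cC : \t{forg}$, with weak equivalences and fibrations created levelwise -- matches theirs, and your sense that the real work lies in controlling pushouts of free maps is sound in spirit; but the transfer criterion you propose to verify is not the one Berger--Moerdijk use. You plan to check directly that every pushout of $\t{Free}(j)$, for $j$ a generating acyclic cofibration of $\fS$-collections, is a levelwise weak equivalence by filtering the pushout by trees; that is a legitimate route (close to Spitzweck's treatment of the same result) but it is the combinatorially heavy one. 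Berger--Moerdijk instead invoke Quillen's path object argument for transfer: the would-be transferred structure on operads is a model structure as soon as one exhibits (i) a fibrant replacement functor on operads (induced by the assumed symmetric monoidal fibrant replacement on $\cC$) and (ii) functorial path objects for fibrant operads, built by tensoring with a cocommutative Hopf interval. This sidesteps your tree-filtration analysis entirely. Note also that your gloss of the ``+'' hypothesis -- ``a cofibrant resolution of the unit by an interval with free symmetric action'' -- is off: what Berger--Moerdijk need is a \emph{cocommutative coalgebra} structure on the interval, not any freeness condition, and it is supplied automatically by the \emph{cartesian} hypothesis (every object in a cartesian category has a cocommutative diagonal), while the ``+'' is the residue the paper's remark spells out (cofibrantly generated, cofibrant unit, symmetric monoidal fibrant replacement). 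Your intuition that the interval is what tames the non-free $\fS_n$-actions on tree strata is correct, but in their argument it does so through the path-object construction rather than through a cell-by-cell analysis of pushouts.
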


We have not spelled out the meaning of ``cartesian+''. This is a shorthand notation for cartesian category satisfying some additional properties (cofibrantly generated with cofibrant terminal object and admitting symmetric monoidal fibrant replacement functor), cf. the assumptions of Theorem~3.2 in~\cite{berger_moerdijk_model}. For our purposes it suffices to record that this  holds for all three model structures that we consider on $\Top$.

\subsection{$W$-construction and cofibrant replacement} \label{sec:W} 

\qquad 

The \emph{$W$-construction} for operads plays the role of the familiar bar resolution for algebras. Our references here are Vogt~\cite{vogt_cofibrant} and Berger-Moerdijk~\cite{berger_moerdijk_bv}. We refer to~\S\ref{sec:free_operad} for notation concerning the definition of the free operad associated to a graded object. 

Given a topological operad $O$, recall that we denote $O_*$ the graded topological space $O_*=(O_1,O_2,\dots)$. We define a new operad $W(O)$ out of $\t{Free}(O_*)$ as follows. For each $n\ge 1$ we define 
\[
W(O)_n = \coprod_{[\tau]\in Tree_n} O^{[\tau]} \times [0,1]^{\mathrm{Edge}_{[\tau]}} /\sim_W,
\]
for a certain equivalence relation $\sim_W$. Here  $O^{[\tau]}\times [0,1]^{\mathrm{Edge}_{[\tau]}}$ is a notation for the quotient of
$\sqcup _{\tau\in [\tau]} O^\tau\times [0,1]^{\mathrm{Edge}_\tau}$, where $\tau\in\t{Tree}_n$ ranges over the elements of the equivalence class $[\tau]\in Tree_n$, by the equivalence relation given by isomorphisms of labeled trees, which act on the first factor as in~\S\ref{sec:free_operad_functor} and which act on the second factor via their action on the sets of edges of trees. Thus $O^{[\tau]} \times [0,1]^{\mathrm{Edge}_{[\tau]}}$ should be interpreted as the $[\tau]$-component of $\t{Free}(O_*)$, which consists of all possible labelings of the vertices $v$ of a tree $\tau$ by elements of $O_{|\mathrm{Child}(v)|}$, with the additional data of a length in $[0,1]$ for each internal edge. The equivalence relation $\sim_W$ consists simply in identifying two vertices $v$, $w$ which are connected by an edge of length $0$, and replacing their corresponding labels, which are elements of $O_{|\mathrm{Child}(v)|}$ and $O_{|\mathrm{Child}(w)|}$, by their composition in $O$ which is an element of $O_{|\mathrm{Child}(v)|+|\mathrm{Child}(w)|-1}$. 

Loosely speaking, {\it $W(O)$ is obtained from $\t{Free}(O_*)$ by giving lengths to internal edges of trees and merging vertices according to the composition rules in $O$ when the connecting edges acquire length zero.} The composition rule in $W(O)$ is inherited from that of $\t{Free}(O_*)$, with the convention that each new internal edge which results from a composition by gluing two half-edges is attributed length $1$. 

Given a point $o\in W(O)_n$, we obtain a point of $O_n$ by composing the operations in the corresponding tree. This results in a functorial map of operads $W(O)\to O$ which is (essentially by construction) a homotopy equivalence, see \cite[Theorem 5.1]{berger_moerdijk_bv}.

The $W$-construction is useful for replacing maps of operads by cofibrations. Namely, we have the following theorem. 
\begin{theorem}[{\cite[Proposition~6.6]{berger_moerdijk_bv}}] \label{thm:cofibration_operads} 
If $O\to O'$ is a map of operads which is a cofibration on the level of $\fS$-equivariant graded spaces, then $W(O)\to W(O')$ is a cofibration.
\end{theorem}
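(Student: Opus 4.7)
The plan is to exhibit $W(O)\to W(O')$ as a (transfinite) composition of pushouts of generating cofibrations of operads, built up by a filtration of the $W$-construction indexed by tree complexity. First, I would set up a filtration $W^{(0)}(O)\subset W^{(1)}(O)\subset\dots\subset W(O)$, where $W^{(k)}(O)$ is the suboperad built from trees of operations $\tau$ whose vertex set has cardinality at most $k$ (after collapsing zero-length edges using $\sim_W$). By inspection of the composition rule, which attaches length $1$ to newly created internal edges, each $W^{(k)}(O)$ is closed under composition, and $W(O)=\mathrm{colim}_k W^{(k)}(O)$.

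Next, I would analyze the successive inclusion $W^{(k-1)}(O)\hookrightarrow W^{(k)}(O)$. For each isomorphism class $[\tau]\in Tree_n$ of a labeled tree with exactly $k$ vertices, the corresponding piece of $W^{(k)}(O)\setminus W^{(k-1)}(O)$ is controlled by a cube $[0,1]^{\mathrm{Edge}_{[\tau]}}$ decorated by $O^{[\tau]}$. The boundary of this cube splits into two kinds of faces: edges acquiring length $0$, which by $\sim_W$ are already present in $W^{(k-1)}(O)$ via composition in $O$, and edges acquiring length $1$, which are already present in $W^{(k-1)}(O)$ as the new edges appearing from composition with the trivial tree of length $1$. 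This presents $W^{(k-1)}(O)\hookrightarrow W^{(k)}(O)$ as a pushout of the standard boundary inclusion $\partial[0,1]^E\hookrightarrow [0,1]^E$, smashed (in the $\fS$-equivariant sense) with the decoration space $O^{[\tau]}$ and with appropriate automorphisms of $[\tau]$ taken into account.

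The crux is then the following: given the same diagrammatic setup for $O'$, the hypothesis that $O_*\to O'_*$ is an $\fS$-equivariant cofibration of graded spaces implies that $O^{[\tau]}\to (O')^{[\tau]}$ is a cofibration equivariantly for the automorphism group $\Aut(\tau)$. Combined with the cofibration $\partial[0,1]^E\hookrightarrow [0,1]^E$ of $\Aut(\tau)$-spaces (where $\Aut(\tau)$ permutes coordinates), the pushout-product axiom for the model structure on $\Aut(\tau)$-spaces produces a cofibration at each stage. Passing to the induced map $W^{(k-1)}(O')\cup_{W^{(k-1)}(O)}W^{(k)}(O)\hookrightarrow W^{(k)}(O')$ and iterating over $k$ exhibits $W(O)\to W(O')$ as a transfinite composition of cofibrations of operads, hence a cofibration.

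The main obstacle will be treating the symmetric-group and tree-automorphism actions carefully, since the pieces $O^{[\tau]}\times [0,1]^{\mathrm{Edge}_{[\tau]}}$ are defined by a quotient that mixes the $\fS_n$-action on labelings with the permutation of edge-length coordinates by $\Aut(\tau)$. To make the pushout-product argument rigorous, I would need either to work with planar representatives $O^\tau\times[0,1]^{\mathrm{Edge}_\tau}$ indexed by $\t{PlanarTree}_n$ and then descend to $Tree_n$, or else to verify the pushout-product axiom directly in the appropriate category of $\Aut(\tau)$-equivariant spaces (which is the technical content of Berger--Moerdijk's argument, and is where the ``cartesian+'' hypothesis on the ambient model category is genuinely used).
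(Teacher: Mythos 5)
The paper does not supply a proof of this statement: it is quoted verbatim from Berger--Moerdijk, \cite[Proposition~6.6]{berger_moerdijk_bv}, so the right benchmark is the argument in that reference. Your outline is indeed the one used there --- filter the $W$-construction by tree complexity, exhibit each inclusion as a pushout of free operad extensions along a cell, and invoke the pushout-product axiom with appropriate equivariance --- so the overall strategy is correct and matches the source.

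There is, however, a genuine gap in the way you set up the filtration. You define $W^{(k)}(O)$ to consist of decorated trees with at most $k$ vertices after collapsing length-$0$ edges, and claim that ``by inspection of the composition rule, which attaches length $1$ to newly created internal edges, each $W^{(k)}(O)$ is closed under composition.'' This is false: the composition of two decorated trees with $k_1$ and $k_2$ vertices (in reduced form, i.e.\ all edge lengths positive) produces a tree with $k_1+k_2$ vertices, and since the new edge has length $1$, nothing collapses. So your $W^{(k)}(O)$ is not a suboperad, and the pushout diagram $W^{(k-1)}(O')\cup_{W^{(k-1)}(O)}W^{(k)}(O)\hookrightarrow W^{(k)}(O')$ you write down is not a diagram of operads. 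The correct filtration --- which is what Berger--Moerdijk use --- is by the suboperads \emph{generated} by decorated trees with at most $k$ vertices; concretely, $W^{(k)}(O)$ should consist of decorated trees all of whose components obtained by cutting at length-$1$ internal edges have at most $k$ vertices. This \emph{is} closed under composition precisely because new edges have length $1$. With that fix, the successive inclusion is a bona fide pushout \emph{of operads} of $W^{(k-1)}(O)$ along $\t{Free}\big(\sqcup_{[\tau]} O^{[\tau]}\times\partial[0,1]^{E_\tau}\big)\to \t{Free}\big(\sqcup_{[\tau]} O^{[\tau]}\times[0,1]^{E_\tau}\big)$ where $[\tau]$ ranges over isomorphism classes of trees with exactly $k$ vertices; the new cells alone do not describe $W^{(k)}\setminus W^{(k-1)}$, since the operadic pushout also contains all compositions. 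The remaining equivariance and pushout-product issues you flag are indeed the technical core, and your deferral there is appropriate given the blind setting.
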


A $\fS$-cofibration is understood levelwise with respect to the action of the symmetric groups $\fS_n$. While not giving the general definition, it will be enough for our purposes to record that a $\fS_n$-equivariant map $f:A\to B$ between $\fS_n$-spaces is a $\fS_n$-cofibration whenever the underlying non-equivariant map is a cofibration and the $\fS_n$-action on $A$ and $B$ is free. By convention the $\fS_n$-action on the empty set is free.

\begin{corollary}\label{cor:W-cofibrant} Let $O$ be an operad in $\Top$ and assume that each space $O_n$ is homotopy equivalent to a CW-complex and the action of $\fS_n$ on $O_n$ is free. Then $W(O)$ is a cofibrant replacement in the mixed model category structure. 
\end{corollary}

\begin{proof}
The conditions guarantee that $\emptyset\to O$ is a cofibration of $\fS$-equivariant graded spaces. By Theorem~\ref{thm:cofibration_operads}, the map $\emptyset=W(\emptyset)\to W(O)$ is a cofibration.
\end{proof}
 
  In the statement of the next corollary we use 
the Berger-Moerdijk model structure on $\t{Op}_{\Top}$ (\S\ref{sec:BM}) and the projective model structure on $\t{Op}_{\Top}^J$ with $J=\{a\longleftarrow b\longrightarrow c\}$ (\S\ref{sec:hocolim}).  

\begin{corollary} \label{cor:homotopy-pushout}
Let $\Top$ be endowed with 
the mixed model structure. Let 
$$
O'\longleftarrow O \longrightarrow O''
$$
be a diagram of topological operads which are levelwise homotopy equivalent to CW-complexes and which carry levelwise free $\fS_n$-actions. Assume further that each map is a levelwise cofibration.  
The homotopy colimit is computed as the colimit of the diagram 
$$
W(O')\longleftarrow W(O) \longrightarrow W(O'').
$$
\end{corollary}

\begin{proof}
The conditions imply that the maps $O\to O'$ and $O\to O"$ are cofibrations of $\fS$-equivariant graded spaces, so that $W(O)\to W(O')$ and $W(O)\to W(O")$ are cofibrations (Theorem~\ref{thm:cofibration_operads}). By Corollary~\ref{cor:W-cofibrant} applied to $O$, the operad $W(O)$ is cofibrant. By the discussion in~\S\ref{sec:hocolim}, the diagram $W(O')\longleftarrow W(O) \longrightarrow W(O'')$ is a cofibrant replacement of the diagram $O'\longleftarrow O \longrightarrow O''$ and therefore $\operatorname{hocolim}(O'\longleftarrow O \longrightarrow O'')=\operatorname{colim}(W(O')\longleftarrow W(O) \longrightarrow W(O''))$.
\end{proof}

\begin{proposition} \label{prop:hocolim2-operads}
Let $\Top$ be endowed with 
the mixed model structure. Let 
$$
O'\longleftarrow O \longrightarrow O''
$$
be a diagram of topological operads which are levelwise homotopy equivalent to CW-complexes and which carry levelwise free $\fS_n$-actions. Assume that the map $O\to O''$ is a levelwise cofibration.  
The homotopy colimit is computed as the colimit of the diagram 
$$
W(O')\longleftarrow W(O) \longrightarrow W(O'').
$$
\end{proposition}

\begin{proof} Since the homotopy colimit is defined in the homotopy category we have 
$$
\operatorname{hocolim}(O'\longleftarrow O \longrightarrow O'') = \operatorname{hocolim}(W(O')\longleftarrow W(O) \longrightarrow W(O'')). 
$$
Our assumptions ensure that the map $W(O)\to W(O'')$ is a cofibration and $W(O)$, $W(O')$ are cofibrant. By Lemma~\ref{lem:hocolim2} we then have 
\begin{align*}
\operatorname{hocolim}(W(O')\longleftarrow W(O) & \longrightarrow W(O'')) \\
& =\operatorname{colim}(W(O')\longleftarrow W(O) \longrightarrow W(O'')).
\end{align*}
\end{proof}

\begin{remark}
In the previous statements, the assumption that the operads are levelwise homotopy equivalent to CW-complexes can be dropped provided one uses the Str\o m model structure on $\Top$.
\end{remark}

\section{Proof of the main theorem}  \label{sec:proof_main}

\subsection{Homotopy colimits}  \label{sec:proof_main_hocolim}
Let us consider the map of $\fS$-equivariant spaces $\widetilde{\t{Ann}}\to \widetilde{\t{Fr}_\partial}$. This is a map of free $\fS$-spaces 
which at the level of the underlying topological spaces is an embedding of a connected component. Moreover, the remaining connected components of $\widetilde{\t{Fr}_\partial}$ are homotopy equivalent to CW-complexes, hence are cofibrant in the mixed model structure (and also in the Str\o m model structure). This map is therefore a levelwise cofibration in the mixed model structure (and in the Str\o m model structure).

Consider now the map of $\fS$-equivariant spaces $\widetilde{\t{Ann}}\to \widetilde{\t{NodAnn}}$. This is a map of free $\fS$-spaces which are homotopy equivalent to CW-complexes.

By Proposition~\ref{prop:hocolim2-operads}, the homotopy colimit with respect to these model structures
$$\t{hocolim}(\widetilde{\t{NodAnn}}\leftarrow\widetilde{\t{Ann}}\rightarrow \widetilde{\mathrm{Fr}}_\partial)$$
is computed by the operad colimit 
\[\t{colim}\big(W(\widetilde{\t{NodAnn}})\leftarrow W(\widetilde{\t{Ann}})\rightarrow W(\widetilde{\mathrm{Fr}}_\partial)\big).\]

Given a labeled tree of operations $\tau$ with $n$ inputs, recall from Definition~\ref{defi:split-tilde} the moduli space $\widetilde{\t{Split}}_\tau$ of framed split surfaces with possibly coinciding seams and dual graph $\tau$. Let ${\t{Edge}_\tau}$ be the set of edges of $\tau$. We glue together the spaces $\widetilde{\t{Split}}_\tau\times [0,1]^{\t{Edge}_\tau}$, where we view the coordinate $[0,1]^{\{e\}}$ for $e$ an edge as being attached to the seam $S_e$, into the full \emph{Humpty-Dumpty space}
\[
\widetilde{\t{HD}}^\t{tree} = \frac{\bigsqcup_\tau \widetilde{\t{Split}}_\tau\times [0,1]^{\t{Edge}_\tau}}{\sim},
\]
where $\sim$ is the equivalence relation 
\begin{equation*}
\begin{split}
\big(\Sigma, (S_1,\dots, S_E), & (t_1, \dots, t_e = 0, \dots, t_E)\big)  \sim \\
&  \big(\Sigma, (S_1, \dots, \hat{S}_e, \dots, S_E), (t_1,\dots, \hat{t_e}, \dots t_E)\big).
\end{split}
\end{equation*}
Similarly, we define 
\[
\t{HD}^\t{tree} = \frac{\bigsqcup_\tau \t{Split}_\tau\times [0,1]^{\t{Edge}_\tau}}{\sim},
\]
with respect to the same equivalence relation.

The spaces $\widetilde{\t{HD}}^\t{tree}$ and $\t{HD}^\t{tree}$ classify split surfaces with additional simplicial parameters that allow us to continuously ``put the curve back together'', hence the term ``Humpty-Dumpty''.

\begin{lemma} \label{lem:HDtree}
  We have homeomorphisms 
  $$
  \widetilde{\t{HD}}^\t{tree}\simeq W\big(\widetilde{\mathrm{Fr}}_\partial\big) 
  \quad \mbox{and} \quad  
  \t{HD}^\t{tree}\break\simeq W\big(\mathrm{Fr}_\partial\big).
  $$ 
\end{lemma}

\begin{proof}
This is a direct consequence of Lemma~\ref{lem:from_free_to_seamed_unital} and the description of the $W$-construction in~\S\ref{sec:W}. 
\end{proof}

Similarly, we define the \emph{moduli of nodal Humpty-Dumpty surfaces} as 
\[
\widetilde{\t{NodHD}}^\t{tree} = \frac{\bigsqcup_\tau \widetilde{\t{NodSplit}}^\t{tree}_\tau\times [0,1]^{\t{Edge}_\tau}}{\sim},
\]
and 
\[
\t{NodHD}^\t{tree} = \frac{\bigsqcup_\tau \t{NodSplit}^\t{tree}_\tau\times [0,1]^{\t{Edge}_\tau}}{\sim},
\]
where $\sim$ is the same equivalence relation as above.

\begin{remark}
Using the formalism of Segal operads developed in Appendix~\ref{sec:dendroidal}, it is the case that $\widetilde{\t{NodHD}}^\t{tree}$, resp. $\t{NodHD}^\t{tree}$, is a Segal operad which is equivalent to the $W$-construction applied to the Segal operad in topological moduli problems $\widetilde{\t{NodFr}}_\partial^\t{tree}$,  resp. $\t{NodFr}_\partial^\t{tree}$. 
We will not need this result, however, as we will be most interested in a stabilizer-free subspace of $\widetilde{\t{NodHD}}^\t{tree}$, resp. $\t{NodFr}_\partial^\t{tree}$.
\end{remark}

\begin{definition}
We define 
\[
\widetilde{\t{NodHD}}^\t{tree}_{\t{protected}}, \mbox{ and respectively} \quad \t{NodHD}^\t{tree}_{\t{protected}},
\] 
to be the space of tuples $(\Sigma, \{S_e\}, \{t_e\})$ as in $\widetilde{\t{NodHD}}^\t{tree}$, respectively $\t{NodHD}^\t{tree}$, such that each node of $\Sigma$ is surrounded on both sides by seams which can be contracted to the node and have weight $1$. 
\end{definition}

\begin{lemma} \label{lem:from-W-to-NodHD} We have canonical isomorphisms 
\[\t{colim}\big(W(\widetilde{\t{NodAnn}})\leftarrow W(\widetilde{\t{Ann}})\rightarrow W(\widetilde{\mathrm{Fr}}_\partial)\big)\cong \widetilde{\t{NodHD}}^\t{tree}_{\t{protected}},\]  
and
\[
\t{colim}\big(W(\t{NodAnn})\leftarrow W(\t{Ann})\rightarrow W(\mathrm{Fr}_\partial)\big)\cong \t{NodHD}^\t{tree}_{\t{protected}}.
\]   
\end{lemma}

\begin{proof} We give details only for the proof of the first isomorphism since the proof of the second one is verbatim the same after removing the symbol $\widetilde{\quad}$ everywhere. 

Using the equivalence $W\big(\widetilde{\mathrm{Fr}}_\partial\big)\simeq \widetilde{\t{HD}}^\t{tree}$ from Lemma~\ref{lem:HDtree} the statement becomes 
\begin{equation} \label{eq:NodHDtreeprotected}
\t{colim}\big(W(\widetilde{\t{NodAnn}})\leftarrow W(\widetilde{\t{Ann}})\rightarrow \widetilde{\t{HD}}^\t{tree}\big)\cong \widetilde{\t{NodHD}}^\t{tree}_{\t{protected}}.
\end{equation}
The operad $W(\widetilde{\t{Ann}})$ can be described as consisting of standard annuli with parametrized boundary and parametrized seams given by concentric circles, with simplicial parameters in $[0,1]$ attached to them. Seams with simplicial parameter equal to $0$ can be erased or added. The operad $W(\widetilde{\t{NodAnn}})$ can be described in a similar way, allowing also nodal annuli.

We have a map 
$$
W(\widetilde{\t{NodAnn}}) \sqcup_{W(\widetilde{\t{Ann}})}  \widetilde{\t{HD}}^\t{tree} \to \widetilde{\t{NodHD}}^\t{tree}_{\t{protected}}
$$
given by gluing along their boundaries the framed surfaces and nodal annuli by which we decorate the vertices of a tree, while keeping track of the boundary curves by interpreting them as additional seams with weight $1$. Reasoning as in the proof of the Geometric Pushout Theorem~\ref{thm:geomthm} we see that this map descends to a homeomorphism after imposing the pushout relations on the free product.   
\end{proof}


\subsection{Proof of the main theorem in genus 0}  \label{sec:DM-genus0}
In this section we prove our main theorem in genus 0, recovering thus by a different method the theorem of Drummond-Cole~\cite{drummond2013homotopically} mentioned in the Introduction. We denote $\overline{\M}_{0,*}$ the genus 0 Deligne-Mumford-Knudsen operad. Since genus $0$ curves with at least 3 marked points have no automorphisms, there are no stacky phenomena to take into account. We will use the subscript notation $\t{NodFr}^{\t{tree}}_{g=0}\subset \t{NodFr}^{\t{tree}}$ etc. to denote the genus 0 suboperads of the various operads that we use. 

\begin{lemma} \label{lem:from-DM-to-NodFr}
  We have a homotopy equivalence of operads 
  \[\funnel:
  \overline{\M}_{0,*}\stackrel\simeq\longrightarrow \widetilde{\t{NodFr}}^{\t{tree}}_{\partial,g=0}.
  \]
\end{lemma}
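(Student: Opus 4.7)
The plan is to write down an explicit map
\[
\varphi : \t{DM}^{\t{tree}} \longrightarrow \widetilde{\t{NodFr}}^{\t{tree}}_\partial
\]
of (orbifold) operads by ``blowing up'' each marked point into an analytically parametrized boundary circle, and then to exhibit $\varphi$ as a levelwise weak equivalence by showing that the natural inverse--the capping map--is a homotopy inverse. More precisely, given a tree-like stable closed nodal curve $(\Sigma; p_0, \ldots, p_n)$, I would choose a compatible family of analytic coordinate charts around each $p_i$, remove a small closed disk $D_i$ around $p_i$, and take $\partial D_i$ with the induced analytic parametrization. The definition only makes canonical sense up to a contractible space of choices, which is why we formulate the statement in the $\infty$-operadic setting of Appendix~\ref{sec:orbifolds}.

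Next, I would check that $\varphi$ is a morphism of operads. On the $\t{DM}^{\t{tree}}$ side, operadic composition is node formation; on the $\widetilde{\t{NodFr}}^{\t{tree}}_\partial$ side, it is gluing along matching parametrized boundaries. These are matched by $\varphi$ because the two disks attached on either side of a node can be identified with the standard analytic gluing data along a parametrized boundary. The tree-like hypothesis is essential: only under this hypothesis does the recursive splitting along nodes produce globally coherent coordinate data out of local disk choices, avoiding the monodromy issues that would appear on cycles of the dual graph.

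For the weak equivalence, I would construct a homotopy inverse as the forgetful ``capping map'' $\pi: \widetilde{\t{NodFr}}^{\t{tree}}_\partial \to \t{DM}^{\t{tree}}$ that glues the standard closed unit disk $D \subset \C$ along each parametrized boundary and marks the image of $0 \in D$; on the degenerate-annulus stratum $\t{Ann}^0 \simeq \t{Aut}(S^1)$, the two parametrizations become identified and both disk-centres collapse to a single marked point. Then $\pi \circ \varphi$ is the identity on $\t{DM}^{\t{tree}}$, while $\varphi \circ \pi$ is homotopic to the identity via a scaling homotopy that shrinks the chosen disks inside each framed surface to be arbitrarily small. The crucial point enabling this shrinking homotopy to be performed operadically, rather than merely levelwise, is that $\widetilde{\t{NodFr}}^{\t{tree}}_\partial$ already contains all of $\widetilde{\t{Ann}}$ and $\t{NodAnn}$, which supply the necessary intermediate objects absorbing the $\t{Aut}(S^1)$-worth of parametrization ambiguity at each boundary--this is precisely analogous to the use of nodal annuli in Lemma~\ref{lem:nodann-contractible}.

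The hardest step, and the main obstacle I anticipate, is handling the boundary strata coherently, especially at ``colliding'' marked points where a node forms. At such strata the space of compatible disk embeddings becomes singular, but the inclusion of $\t{NodAnn}$ on the $\widetilde{\t{NodFr}}^{\t{tree}}_\partial$ side precisely compensates: a nodal annulus with modulus $\infty$ interpolates between a thick annulus and two disks meeting at a node, and by Lemma~\ref{lem:nodann-contractible} its moduli space is contractible, so it does not introduce spurious homotopy. A second technical obstacle is the compatibility with the orbifold structure on $\t{DM}^{\t{tree}}$ at points with nontrivial automorphism groups: I would reduce this to a stalkwise check via the $\infty$-categorical Segal-type description of orbifold operads in Appendix~\ref{sec:orbifolds}, where it amounts to matching the automorphism groups of stable pointed nodal curves with those of the corresponding framed nodal surfaces, and where once again the tree-like hypothesis keeps the combinatorics under control.
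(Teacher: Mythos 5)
Your construction goes in the opposite direction from the paper's, and this leads to a genuine gap. The paper's map $Fr:\t{DM}^{\t{tree}}\to\widetilde{\t{NodFr}}^{\t{tree}}_\partial$ does not \emph{remove} a disk around each marked point; it \emph{attaches} a disk $\D$ (or $\overline{\D}$, for the output) at each marked point, so that the marked point becomes a node and the new boundary is the boundary of the attached disk, carrying the standard parametrization. This is completely canonical (no coordinate charts to choose), so $Fr$ is a strict morphism of operads: composing $Fr(X_1)$ and $Fr(X_2)$ along matching attached-disk boundaries produces a sphere $\D\cup_\partial\overline{\D}$ sandwiched between $X_1$ and $X_2$ with a node on each side, which after stabilization collapses to the single node of $X_1\circ_i X_2$. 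That is the ``simple stabilization argument'' the paper refers to.

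Your $\varphi$ has two problems that the paper's $Fr$ sidesteps. The first, which you do acknowledge, is that removing a disk requires a choice of analytic coordinates; your proposal to absorb this into a contractible space of choices is reasonable but already more work than the paper's choice-free construction. The second, which you do not acknowledge and which is the real gap, is that $\varphi$ fails to intertwine the operad compositions even up to those choices. Composition in $\t{DM}^{\t{tree}}$ identifies the output marked point of one curve with an input marked point of another, creating a \emph{node}; composition in $\widetilde{\t{NodFr}}^{\t{tree}}_\partial$ glues two parametrized boundary circles, producing a \emph{smooth neck} with finite modulus. Thus $\varphi(X_1)\circ_i\varphi(X_2)$ is smooth near the gluing circle, while $\varphi(X_1\circ_i X_2)$ has a node there; these are distinct points of $\widetilde{\t{NodFr}}^{\t{tree}}_\partial$, not related by any contractible ambiguity. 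Your claim that ``these are matched by $\varphi$ because the two disks attached on either side of a node can be identified with the standard analytic gluing data'' conflates the plumbing degeneration (a one-parameter family) with an equality of points in moduli. Fixing this would require promoting $\varphi$ to a map of $\infty$-operads with explicit coherence two-cells built from plumbing families, which is a great deal more work than the paper's direct approach; the $Fr$/$\t{cap}$ mechanism is precisely the device that makes those coherences unnecessary. Your ``tree-like hypothesis avoids monodromy'' remark is also off target: the tree-like restriction appears because the target operad is by definition the tree-like part; neither $Fr$ nor $\varphi$ touches existing nodes, so no monodromy around cycles of the dual graph is involved.

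On the retraction homotopy: writing $\varphi_r\circ\pi(\Sigma)=\Sigma\cup_\partial(\sqcup A_{1,r})$ with $A_{1,r}$ the annulus $\{r\le|z|\le1\}$, your family and the paper's $\t{stretch}_\alpha$ (gluing a modulus-$\alpha$ annulus at every boundary) agree under $\alpha=\ln(1/r)$; so this step is essentially the same idea. But your description of the direction is backwards: shrinking $r$ toward $0$ does not give the identity, it gives the nodal limit $Fr\circ\t{cap}$ (the annuli degenerate to $\D\cup_0\overline{\D}$). The identity endpoint is $r\to1$, where the annuli become degenerate annuli in $\widetilde{\t{Ann}}$. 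This is a minor exposition issue, unlike the composition problem above.
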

\begin{proof}
Let $\D$ be the standard unit disk, framed with the standard boundary parametrization $\theta\mapsto \exp(2\pi i\theta)$ for $\theta\in\R/\Z$ and let $0\in \D$ be the origin. Let $\overline{\D}$ be the disk framed with the reverse boundary parametrization $\theta \mapsto \exp(-2\pi i\theta)$. Let $A_\alpha$ be the standard annulus of modulus $\alpha\in (0,\infty)$ framed with the standard boundary parametrizations. We then have 
  \[
  \lim_{\alpha\to \infty} A_\alpha = \D\cup_0 \overline{\D}
  \] 
  in $\t{NodAnn}$, compatibly with boundary parametrizations. 
  
  Given a marked nodal surface $X\in \overline{\M}_{0,*}$, write 
  \[
  \funnel(X)
  \] 
  for the framed nodal surface obtained by gluing (at $0\in \D$) a copy of $\D$ at every input marked point of $X$ and a copy of $\overline{\D}$ at the output marked point of $X$. See Figure~\ref{fig:fr-genus0}. 
  
\begin{figure} [ht]
\centering
\input{fr-genus0.pstex_t}
\caption{By attaching disks at marked points one turns a surface into a framed nodal surface.}
\label{fig:fr-genus0}
\end{figure}
  
  By a simple stabilization argument we see that 
  \[
  \funnel:\overline{\M}_{0,*}\to \widetilde{\t{NodFr}}^{\t{tree}}_{\partial,g=0}
  \] 
  is a map of topological operads. On the other hand we see that $Fr_n$ is the embedding of a homotopy retract for each arity $n\ge 0$. Indeed, let 
  \[
  \t{\bf cap}:\widetilde{\t{NodFr}}^{\t{tree}}_{\partial,g=0}\to \t{DM}^{\t{tree}}
  \] 
  be the map (now of $\fS$--graded spaces, not operads) which assigns to a nodal surface with boundary the surface with marked points obtained by gluing a copy of $\overline{\D}$ at each input and a copy of $\D$ at the output, and marking all images of $0\in \overline{\D}$, respectively $\D$. See Figure~\ref{fig:cap}. 
  
\begin{figure} [ht]
\centering
\input{cap.pstex_t}
\caption{By attaching caps along the boundary and stabilizing one turns a framed nodal surface into a nodal surface with marked points.}
\label{fig:cap}
\end{figure}

  Then it is clear that 
  \[
  \t{\bf cap}\circ \funnel = \one_{\t{DM}}.
  \] 
  On the other hand, consider the maps 
  \[
  \t{\bf stretch}_\alpha: \widetilde{\t{NodFr}}^{\t{tree}}_{\partial,g=0}\to \widetilde{\t{NodFr}}^{\t{tree}}_{\partial,g=0},\qquad \alpha\in[0,\infty]
  \] 
  defined by gluing the standard annulus $A_\alpha$ at every input and output of a framed nodal surface. This defines a homotopy equal to the identity map at $\alpha = 0$ and equal to $\funnel\circ \t{\bf cap}$ at $\alpha = +\infty$, which proves the homotopy retract property.
\end{proof}

The following result will complete the proof of our main theorem. 
\begin{proposition} \label{prop:from-protected-to-unprotected} 
There is a weak equivalence of operads
\[
\boldsymbol{\pi}: \widetilde{\t{NodHD}}^\t{tree}_{\t{protected},g=0} \longrightarrow
\widetilde{\t{NodFr}}^{\t{tree}}_{\partial,g=0},
\]
where $\boldsymbol{\pi}$ forgets the simplex parameters and glues along the seams. 
\end{proposition}

\begin{proof}
It suffices to show that 
\begin{equation} \label{eq:pi-trivial-Serre-fib} 
\boldsymbol{\pi}: \t{NodHD}^\t{tree}_{\t{protected},g=0} \longrightarrow
\t{NodFr}^{\t{tree}}_{\partial,g=0}
\end{equation} 
is a trivial Serre fibration. Indeed, we can ignore thickness-zero curves because including them does not change the weak homotopy type. We do this in two steps: firstly we show that the fiber of~\eqref{eq:pi-trivial-Serre-fib} is contractible, and secondly we show that~\eqref{eq:pi-trivial-Serre-fib} is a Serre fibration.

\smallskip
{\it Step~1. We show that the fiber of~\eqref{eq:pi-trivial-Serre-fib} is contractible.} 

Let $\Sigma\in \t{NodFr}^{\t{tree}}_{\partial,g=0}$ be a surface, and let $\t{HD}_\Sigma: = \boldsymbol{\pi}^{-1}(\Sigma)$. 
Given mutually disjoint neighborhoods $\cV_i$, $i=1,\dots,k$ of the nodes $z_1,\dots,z_k$ of $\Sigma$, denote $\cV=\sqcup_{i=1}^k \cV_i$ and let $\t{HD}_\Sigma^\cV\subset \t{HD}_\Sigma$ be the subspace consisting of those tree-like split surface structures on $\Sigma$ whose seams lie away from $\overline{\cV}$. Since seams are not allowed to pass through nodes, the spaces $\t{HD}_\Sigma^\cV$ filter $\t{HD}_\Sigma$ by opens as $\cV$ runs over a neighborhood basis of the nodes of $\Sigma$. 

We consider now a specific neighborhood basis of the nodes: we choose a $1$-parameter smooth family of open neighborhoods of the nodes, denoted $\cV_i^\eps$, $0<\eps\le 1$, such that the $\overline{\cV_i^\eps}$ are mutually disjoint for every $\eps$, each $\overline{\cV_i^\eps}$ is analytically diffeomorphic to a nodal annulus, $\overline{\cV_i^\eps}\subset \cV_i^{\eps'}$ for $\eps<\eps'$ and $\bigcap_{\eps\in(0,1]} \cV_i^\eps=\{z_i\}$.

Let $\cV^\eps=\sqcup_{i=1}^k \cV_i^\eps$, let $\t{HD}_\Sigma^\eps=\t{HD}_\Sigma^{\cV^\eps}$, and let 
\[\t{HD}_\Sigma^I = \bigsqcup_{\eps\in (0,1]} \t{HD}_\Sigma^\eps\subset I\times \t{HD}_\Sigma\]
with $I=(0,1]$. The map 
\[
\t{HD}_\Sigma^I\to \t{HD}_\Sigma
\] 
is a homotopy equivalence because it is a fibration and all its fibers are nonempty intervals (with one open endpoint $0$). We are thus left to prove that $\t{HD}_\Sigma^I$ is contractible. 

Consider the canonical map 
\[
\t{\bf gap}:\t{HD}_\Sigma^I\to \t{HD}_\Sigma^I
\] 
given by the tautological inclusions $\t{HD}_\Sigma^\eps\hookrightarrow \t{HD}_\Sigma^{\eps/2}$, $\eps\in(0,1]$, i.e., 
\[
(w, \eps)\mapsto (w, \eps/2), \qquad w\in \t{HD}_\Sigma^\eps. 
\] 
This map is well-defined because the filtration is decreasing. It is a homotopy equivalence because it is homotopic to the identity via the family of maps $\t{\bf gap}_t$, $t\in[0,1]$ given by the tautological inclusions $\t{HD}_\Sigma^\eps\hookrightarrow \t{HD}_\Sigma^{\eps-t\eps/2}$, $(w,\eps)\mapsto (w,\eps-t\eps/2)$.

Choose a continuous family of seams $S_\eps$, $\eps\in(0,1/2]$ contained in $\cV^{2\eps}\setminus \overline{\cV^\eps}$, one on each side of a node. (We can take $S_\eps$ to be given by continuously varying parametrizations of the boundary of $\cV^{3\eps/2}$.) Define the continuous map 
\[
\t{\bf protect}: (0,1/2]\to \t{HD}_\Sigma^I
\] 
which takes $\eps$ to the pair $(\Sigma_\eps,\eps)$, where $\Sigma_\eps=(\Sigma, S_\eps)$ is the protected split surface with seams $S_\eps$ of weight $1$.

We claim that $\t{\bf gap}: \t{HD}_\Sigma^I\to \t{HD}_\Sigma^I$ is homotopic to the map 
$$
\bm{\tau}: (w, \eps)\mapsto \t{\bf protect}(\eps/2).
$$ 
The homotopy is constructed in two steps: starting from $\t{\bf gap}$, we first put in the seams 
$S_{\eps/2}$ with weight continuously changing from $0$ to $1$ (this is allowed because we are in the image of $\t{\bf gap}$ and there are no other seams at distance $\le \eps$ from a node), and then continuously reduce all the other weights to zero (this is allowed because the presence of the $S_{\eps/2}$ with weight $1$ guarantees that the nodes remain protected). At the endpoint of this homotopy we read the map $\bm{\tau}$. 

Since $\bm{\tau}$ factors through an interval, it is homotopic to a constant. Thus $\t{\bf gap}$ is at the same time homotopic to a constant and a homotopy equivalence, which implies that the space $\t{HD}_\Sigma^I$ is contractible. This finishes the proof of the Step~1.

\smallskip
{\it Step~2. We show that~\eqref{eq:pi-trivial-Serre-fib} is a Serre fibration.} 

It is enough to check the Serre fibration property locally on the base. Our proof is based on the existence of a continuously varying thin-thick decomposition in the neighborhood of any fixed surface in $\t{NodFr}^{\t{tree}}_{\partial,g=0}$. In the following we designate by $\Sigma$ both a point in the moduli space $\t{NodFr}^{\t{tree}}_{\partial,g=0}$, and the fiber at $\Sigma$ of the universal curve over the moduli space. 

Given $\Sigma\in \t{NodFr}^{\t{tree}}_{\partial,g=0}$ with nodes $z_1,\dots,z_k$, there exists a neighborhood $\cU\subset \t{NodFr}^{\t{tree}}_{\partial,g=0}$ of $\Sigma$ and a family $\cV_i^\eps(\Sigma')$, $i=1,\dots,k$, $0<\eps\le 1$, $\Sigma'\in \cU$ of open sets $\cV_i^\eps(\Sigma')\subset \Sigma'$ (the ``thin" parts) such that:
\begin{itemize}
\item for each $i$ the family $\cV_i^\eps(\Sigma')$ is continuous in $\eps$ and $\Sigma'$. 
\item for $\Sigma',\eps$ fixed, the closures $\overline{\cV_i^\eps(\Sigma')}$, $i=1,\dots, k$ are disjoint. 
\item for $\Sigma',i$ fixed we have $\overline{\cV_i^\eps(\Sigma')}\subset \cV_i^{\eps'}(\Sigma')$ for $\eps<\eps'$. 
\item each $\cV_i^\eps(\Sigma')$ is analytically diffeomorphic to a (possibly nodal) annulus. If $\cV_i^\eps(\Sigma')$ is analytically diffeomorphic to a nodal annulus for some $\eps>0$, then it is analytically diffeomorphic to a nodal annulus for all $\eps>0$, and in this case $\cap_{\eps>0}\cV_i^\eps(\Sigma')=\{z'_i\}$, the common node of these nodal annuli. 
\end{itemize}
These properties follow from the fact that a sufficiently small neighborhood of $\Sigma$ in $\t{NodFr}^{\t{tree}}_{\partial,g=0}$ consists of surfaces $\Sigma'$ that are obtained from $\Sigma$ by resolving some of its nodes. 

Let $I=[0,1]$. Let $\varphi:I^n\to \cU$ be a continuous map together with a lift $\tilde \varphi_0:I^{n-1}\times \{0\}\to \t{NodHD}^\t{tree}_{\t{protected},g=0}$ of $\varphi|_{I^{n-1}\times\{0\}}$, i.e., $\boldsymbol{\pi}\circ\tilde\varphi_0 = \varphi|_{I^{n-1}\times\{0\}}$. We need to construct $\tilde \varphi:I^n\to \t{NodHD}^\t{tree}_{\t{protected},g=0}$ such that $\boldsymbol{\pi}\circ\tilde\varphi = \varphi$ and $\tilde\varphi|_{I^{n-1}\times\{0\}}=\tilde\varphi_0$. Denote $\cD_i\subset\cU$, $i=1,\dots,k$ the closed set (smooth of complex codimension 1) consisting of curves $\Sigma'$ such that $\cV_i^\eps(\Sigma')$ is analytically diffeomorphic to a nodal annulus.

\begin{remark} Loosely said, the construction of the lift $\tilde \varphi$ will be done in the following steps. Firstly, we insert small protecting seams close to the nodes and close to $I^{n-1}\times \{0\}$. Secondly, we extend the already present seams close to $I^{n-1}\times \{0\}$, and then erase them. Thirdly, we extend the protecting seams throughout the family as boundaries of the thin part. 
\end{remark} 

We now proceed to construct the lift. 

By compactness of $I^{n-1}$, because the tree-like split structures encoded by $\tilde\varphi_0$ are protected, and because a protecting seam at a node survives in a neighborhood of the curve even if the node gets resolved, there exist $\eps>0$ and, for each $i=1,\dots,k$, an open neighborhood $\cU_i\subset \cU$ of $\cD_i$, such that, for $t=(t_1,\dots,t_{n-1},0)\in I^{n-1}\times\{0\}$, if $\varphi(t)\in \cU_i$ then no seam of $\tilde\varphi_0(t)$ intersects $\cV_i^\eps(\varphi(t))$. 

Let $\delta>0$ be small. We make a first extension of $\tilde \varphi_0$ over $I^{n-1}\times[0,\delta]$ in two steps as follows. 
\begin{itemize} 
\item {\it Step~1}. Let $\rho_i:\cU_i\to[0,1]$ be smooth cutoff functions, supported away from $\cU\setminus \cU_i$
and equal to $1$ on $\cD_i$. Let $\rho:[0,1]\to [0,1]$, $\rho(s)=s/\delta$ for $s\in[0,\delta]$, $\rho(s)=1$ for $s\in[\delta,1]$. We insert for each $i$ two seams as boundaries of $\cV_i^{\eps/2}(\varphi(t))$ for $t\in I^{n-1}\times[0,\delta]$, with weight equal to $\rho(t_n) \prod_i \rho_i(\varphi(t))$. These will act as ``protecting" seams in the sequel. 

\item {\it Step~2}. We extend the other seams from $I^{n-1}\times\{0\}$ to $I^{n-1}\times[0,\delta]$ as follows. We choose a smooth trivialization of the family with fiber $\Sigma'$ over $\cU\setminus \cup_i\cU_i$. This induces a trivialization of the family with fiber $\Sigma'\setminus \cup_i \cV_i^\eps(\Sigma')$ over $\cU\setminus \cup_i\cU_i$, which we extend to a trivialization of the family with the same fiber $\Sigma'\setminus \cup_i \cV_i^\eps(\Sigma')$ over $\cU$. For every $t\in I^{n-1}$, every seam $S_{t,0}$ on $\tilde\varphi_0(t,0)$, and every $t_n\in[0,\delta]$, we induce via these trivializations a uniquely determined seam $S_{t,t_n}$ on $\varphi(t,t_n)$. These seams are embedded and smoothly parametrized with continuously varying parametrizations, and they admit continuously varying analytic parametrizations. After making a choice of such continuously varying analytic para\-metrizations, we obtain a lift $\tilde \varphi_{[0,\delta]}$ of $\varphi$ over $I^{n-1}\times [0,\delta]$ that extends $\tilde\varphi_0$. 
\end{itemize}

We now modify the previous lift $\tilde \varphi_{[0,\delta]}$ in two steps as follows.
\begin{itemize}
\item {\it Step~1}. We weigh each of the seams $S_{t,t_n}$ from the previous construction by the function $1-\rho(2t_n)$ for $t_n\in[0,\delta/2]$, and erase them for $t_n\in[\delta/2,\delta]$. 

\medskip 

As a result, for $t\in I^{n-1}\times [\delta/2,\delta]$ the seam structure is the following: for each $i$ we have exactly two seams that are the boundaries of $\cV_i^{\eps/2}(\varphi(t))$, with weight equal to $\rho(t_n) \prod_i \rho_i(\varphi(t))$.

\medskip 

\item {\it Step~2}.  For $t\in I^{n-1}\times [\delta/2,\delta]$ we modify the weight of these seams to $(1-s)\rho(t_n) \prod_i \rho_i(\varphi(t)) + s$, with $s=\frac{2}{\delta}t_n-1$.
\end{itemize}

At this point we have obtained a lift $\tilde\varphi|_{[0,\delta]}$ of $\varphi|_{I^{n-1}\times[0,\delta]}$ such that, for $t\in I^{n-1}\times\{\delta\}$, the seam structure over $\varphi(t)$ consists of the boundaries of the $\cV_i^{\eps/2}(\varphi(t))$, $i=1,\dots,k$ with weight $1$. This seam structure extends tautologically for all $t\in I^{n-1}\times [\delta,1]$ and provides a lift of $\varphi$.

\end{proof}

\begin{proof}[Proof of the Main Theorem~\ref{thm:mainintro} in genus 0]
We saw in~\S\ref{sec:proof_main_hocolim} that the homotopy colimit
$$\t{hocolim}(\widetilde{\t{NodAnn}}\leftarrow\widetilde{\t{Ann}}\rightarrow \widetilde{\mathrm{Fr}}_{\partial,g=0})$$
is computed by the operad colimit 
$$\t{colim}\big(W(\widetilde{\t{NodAnn}})\leftarrow W(\widetilde{\t{Ann}})\rightarrow W(\widetilde{\mathrm{Fr}}_{\partial,g=0})\big).$$
In view of Lemma~\ref{lem:from-W-to-NodHD}, Proposition~\ref{prop:from-protected-to-unprotected} and Lemma~\ref{lem:from-DM-to-NodFr} we obtain the sequence of weak equivalences
\begin{eqnarray}\label{eq:eq_sequence0}
\lefteqn{\t{colim}\big(W(\widetilde{\t{NodAnn}})\leftarrow W(\widetilde{\t{Ann}})\rightarrow W(\widetilde{\mathrm{Fr}}_{\partial,g=0})\big)} \nonumber \\
&\cong& \widetilde{\t{NodHD}}^\t{tree}_{\t{protected},g=0} \xrightarrow[\boldsymbol{\pi}]{\simeq}
\widetilde{\t{NodFr}}^{\t{tree}}_{\partial,g=0} \xleftarrow[\funnel]{\simeq} \overline{\M}_{0,*}.
\end{eqnarray}
This proves the theorem for the unital version of the involved operads. 

The proof carries over verbatim to the non-unital versions of our operads $\t{Ann}$ and $\t{Fr}_\partial$. Alternatively, one can use the fact that the forgetful functor from operads to non-unital operads commutes with homotopy pushouts. 
\end{proof}


\subsection{Proof of the main theorem in arbitrary genus}  \label{sec:DM}
In this section we prove our main theorem in arbitrary genus. The main new phenomenon compared to that of genus 0 is the presence of stabilizers. 

We develop in Appendix~\ref{sec:app_moduli} the convenient language of \emph{topological moduli problems (TMP)} to deal with stabilizers. A topological moduli problem is a contravariant functor $\Top^{op}\to \t{Gpd}$, and the key point is that the spaces of operations of $\t{DM}^{\t{tree}}$ and $\mathrm{NodFr}_\partial^{\t{tree}}$ have natural structures of topological moduli problems. On the other hand, any topological space $X$ can be viewed as a topological moduli problem via the mapping-in functor $S\mapsto \t{Map}_{\Top}(S,X)$, so that the language of topological moduli problems provides a convenient common framework for all the spaces of operations considered in this paper.  

The classical language of operads is not well adapted to deal with spaces of operations that are topological moduli problems. 
Indeed, in the classical language of operads the associativity relations would translate into equalities of functors, whereas in our situation we only have isomorphisms. A convenient language is that of Segal operads, which we introduce in Appendix~\ref{sec:dendroidal}. Roughly speaking, whereas an operad $O$ associates to every arity $n\ge 0$ a space of operations $O_n$, a Segal operad $\Lambda$ associates to every \emph{tree of operations} $\tau$, of arbitrary arity, a space of operations $\Lambda_\tau$. We explain in~\S\ref{sec:Segal-op-and-preop} the construction of the (tree-like) \emph{Deligne-Mumford Segal operad} $\Lambda \t{DM}^{\t{tree}}$ and of the \emph{Segal operad of nodal framed surfaces} $\Lambda \widetilde{\t{NodFr}}^\t{tree}_\p$ as Segal operads in topological moduli problems. 

With this terminology in place, the proof of the genus 0 case of our main theorem goes through with essentially only minor modifications. 

The next lemma is the higher genus counterpart of Lemma~\ref{lem:from-DM-to-NodFr}.

\begin{lemma} \label{lem:from-DM-to-NodFr-Segal}
  We have a weak equivalence of Segal operads  
  \[\funnel:
  \Lambda\t{DM}^{\t{tree}}\stackrel\simeq\longrightarrow \Lambda\widetilde{\t{NodFr}}^{\t{tree}}_\partial
  \]
  induced by applying component-wise the map $\funnel$ from Lemma~\ref{lem:from-DM-to-NodFr}. 
\end{lemma}
\begin{proof} 
Being a weak equivalence of Segal operads means being a homotopy equivalence on the level of simplicial chains for every space of operations, i.e., every object associated to a corolla, see Appendix~\ref{sec:dendroidal}. We thus need to show that 
$$
\funnel:\t{DM}^{\t{tree}}_n \to \widetilde{\t{NodFr}}^{\t{tree}}_{\partial,n}
$$
induces a homotopy equivalence on simplicial realizations. 

The construction from Lemma~\ref{lem:from-DM-to-NodFr} can be applied in order to define a left inverse $\t{\bf cap}$ for the map $\funnel$, which presents $\funnel$ as a retract, i.e., such that $\funnel\circ \t{\bf cap}$ homotopy retracts to the identity. It is therefore enough to see that $\t{\bf cap}$ induces a homotopy equivalence on simplicial realizations. 

By Proposition~\ref{prop:combifib} the map $\t{\bf cap}$ is a combinatorial fibration. The points of the fibers of $\t{\bf cap}$ over a given marked curve $X$ correspond to framed curves which give $X$ after gluing disks onto each boundary component and stabilizing. These fibers have no automorphisms and are representable by topological spaces (see Remark \ref{rmk:topology_on_fr}), hence $\t{\bf cap}$ is a fiberwise representable fibration. The proof of Lemma~\ref{lem:from-DM-to-NodFr} goes through verbatim to show that the fibers of $\t{\bf cap}$ can be simultaneously contracted, which implies that $\t{\bf cap}$ is a trivial weak fibration of topological moduli problems. The conclusion then follows from Lemma~\ref{lm:fib_heq}.
\end{proof}

The next proposition is the higher genus counterpart of Proposition~\ref{prop:from-protected-to-unprotected}.

\begin{proposition} \label{prop:from-protected-to-unprotected-Segal} 
There is a weak equivalence of Segal operads
\[
\boldsymbol{\pi}: \Lambda\widetilde{\t{NodHD}}^\t{tree}_{\t{protected}} \longrightarrow
\Lambda\widetilde{\t{NodFr}}^{\t{tree}}_\partial,
\]
where $\boldsymbol{\pi}$ forgets the simplex parameters and glues along the seams. 
\end{proposition}

\begin{proof}
Being a weak equivalence of Segal operads means being a homotopy equivalence on the level of simplicial chains for every space of operations, i.e., every object associated to a corolla, see Appendix~\ref{sec:dendroidal}. 

  We need to show that the map is a trivial fibration on the level of each space of operations in the sense of Definition~\ref{def:fibtriv}, from which the conclusion follows by Lemma~\ref{lm:fib_heq}. More precisely, we need to prove that, for every corolla, i.e., for every arity $n\ge 0$, the induced map of TMPs 
\[
\boldsymbol{\pi}:\MM \widetilde{\t{NodHD}}^\t{tree}_{\t{protected}, n}\to \widetilde{\t{NodFr}}_{\partial, n}^\t{tree}.
\] 
is a trivial weak fibration. 

By Proposition~\ref{prop:combifib}, this map is a combinatorial fibration. It is also fiberwise representable since fibers have no automorphisms and choices of seams on a given curve form a topological space. That the fibers have no automorphisms is seen as follows: given a family $\sigma$, a point in the fiber $\big(\widetilde{\t{NodHD}}^\t{tree}_{\t{protected}, n})_\sigma$ is given by a family of nodal framed curves with additional data (given by weighted seams), together with an isomorphism between this family and the family $\sigma$. Since the isomorphism is part of the data and the family $\sigma$ is fixed, the only possible automorphism is the identity.

We now prove the trivial weak fibration property. For the proof, let $\sigma\in \widetilde{\t{NodFr}}^{\t{tree}}_{\partial,n}(\Delta^m)$ be a family of curves parametrized by an $m$-dimensional simplex and write $\big(\widetilde{\t{NodHD}}^\t{tree}_{\t{protected}, n})_\sigma$ for the fiber of $\boldsymbol{\pi}$ at $\sigma$. While $\widetilde{\t{NodFr}}^{\t{tree}}_{\partial,n}$ is a topological moduli problem, the fiber has no stabilizers as seen above. 
In this situation the proof of contractibility from Proposition~\ref{prop:from-protected-to-unprotected} goes through verbatim if one works locally on the base $\Delta^m$. This implies contractibility over the whole of $\Delta^m$, and a fortiori also weak contractibility.  
\end{proof}

\begin{proof}[Proof of the Main Theorem~\ref{thm:mainintro} in arbitrary genus]
We saw in~\S\ref{sec:proof_main_hocolim} that the homotopy colimit
$$\t{hocolim}(\widetilde{\t{NodAnn}}\leftarrow\widetilde{\t{Ann}}\rightarrow \widetilde{\mathrm{Fr}}_{\partial})$$
is computed by the operad colimit 
$$\t{colim}\big(W(\widetilde{\t{NodAnn}})\leftarrow W(\widetilde{\t{Ann}})\rightarrow W(\widetilde{\mathrm{Fr}}_{\partial})\big).$$
In view of Lemmas~\ref{lem:from-W-to-NodHD}, \ref{lem:from-DM-to-NodFr-Segal} and Proposition~\ref{prop:from-protected-to-unprotected-Segal} we obtain the sequence of weak equivalences of Segal operads
\begin{eqnarray}\label{eq:eq_sequence}
\lefteqn{\t{colim}\big(W(\widetilde{\t{NodAnn}})\leftarrow W(\widetilde{\t{Ann}})\rightarrow W(\widetilde{\mathrm{Fr}}_{\partial})\big)} \nonumber \\
&\cong& \widetilde{\t{NodHD}}^\t{tree}_{\t{protected}} \xrightarrow[\boldsymbol{\pi}]{\simeq}
\Lambda\widetilde{\t{NodFr}}^{\t{tree}}_{\partial,g=0} \xleftarrow[\funnel]{\simeq} \Lambda \t{DM}^{\t{tree}}.
\end{eqnarray}
This proves the theorem for the unital version of the involved operads. 

The proof carries over verbatim to the non-unital versions of our operads $\t{Ann}$ and $\t{Fr}_\partial$. 
\end{proof}

\appendix

\section{Topological moduli problems} \label{sec:app_moduli}
Most topological spaces we work with in this paper are \emph{moduli spaces} of one kind or another, i.e., spaces $\MM$ characterized by the mapping-in functor $S\mapsto \hom(S,\MM)$ which classifies solutions of an appropriate moduli problem --- usually, the problem of classifying certain curves with additional structure over $S$ up to isomorphism. In some cases, such as $\mathrm{Fr}_\partial,$ solutions to the moduli problem have no automorphisms, and thus $\MM$ is representable in the category of topological spaces. In other cases, such as $\overline{\cM}_{g,n},$ the solutions to the moduli problem may have automorphisms, and thus $\MM$ is a \emph{topological stack} of some kind. In this appendix we will describe a certain 
``strict'' version of topological stacks (or more precisely, pre-stacks) that is sufficient for our purposes and that we call ``topological moduli problems''. On the level of homotopy categories (for the notion of weak stack homotopy equivalence relevant to this paper), any topological (pre-)stack is homotopy equivalent to a topological space. For example, given a (topological) group $G$, the stack $\pt/G$ is homotopy equivalent to the classifying space $BG$, which is stabilizer-free. However, this ``resolution'' from topological stacks to topological spaces loses some 2-categorical information, and does not, at least naively, respect algebraic structures such as operads; thus even in order to define the operad $\t{DM}^{\t{tree}}$ with spaces of operations built out of the $\overline{\M}_{g,n}$ we must work inside a category of topological moduli problems. 

\subsection{Definitions} \label{sec:appA-definitions}

\begin{definition}
A \emph{groupoid} is a category $\C$ all of whose morphisms are invertible and such that the isomorphism classes of objects form a set, denoted $\pi_0(\C).$ 
\end{definition}
\begin{remark}[Set-theoretic technicalities]\label{rmk:set_theory}
We do not require for groupoids to be small categories, but rather only to be equivalent to small categories. In the constructions in the remainder of this appendix, we assume at every point that we have chosen representatives which are small categories in a consistent way. This is possible via standard small object arguments, which we assume implicitly throughout this appendix. One way to make this precise is to choose an inaccessible cardinal $\kappa$ and implicitly assume that the objects we work with are $\kappa$-small --- see for example \cite[\S1.2.15]{lurie-htt} for a discussion.
\end{remark}

\begin{definition}\label{def:topomopro}
A \emph{topological moduli problem (TMP)} is a (strict) contravariant functor $\Top^{op}\to \t{Gpd}$ from the category of topological spaces to the category of groupoids with (strict) groupoid functors. 
\end{definition}
We write $\t{TMP}$ for the category of such functors, with maps $\X\to \Y$ given by natural transformations. Given a map $f:S\to S'$ of topological spaces we write $f^*:\X(S')\to \X(S)$ for the (contravariantly) associated functor of groupoids.

The notion of TMP is a ``stricter" notion of a topological stack. When working with stacks one often works with non-strict functors (defined via slice categories) and imposes a sheaf condition on 
these functors; we will not require either of these sophistications here (though all topological moduli problems we consider will in fact also be stacks). We only need a sufficient formalism to study homotopically our main examples by means of an associated simplicial object. 

\begin{definition}
Given a topological space $X$, define the \emph{topological moduli problem represented by $X$} to be the functor 
$$
X:S\mapsto \t{Map}_{\Top}(S, X)
$$
(the set of continuous maps, viewed as a discrete groupoid, i.e., a groupoid with no morphisms other than $\mathrm{Id}$). For readability we shall often use the same notation $X$ for both the space and the corresponding moduli problem.
\end{definition}
\begin{remark}\label{rmk:yoneda_obj}
If $\X$ is a topological moduli problem and $S$ is a topological space, it makes sense to think of the groupoid $\X(S)$ as the ``groupoid of maps'' from $S$ (viewed as a TMP) to $\X$, even when $\X$ does not itself come from a topological space. Note that, if $\X$ takes values in \emph{discrete} groupoids, i.e., Sets, by the Yoneda lemma we have a natural isomorphism of sets 
$$
\hom_{TMP}(S,\X) = \X(S).
$$ 
If $\X$ is not valued in discrete groupoids, the groupoid $\X(S)$ can be interpreted as a ``Hom groupoid'' 
only after passing to a higher-categorical setting, which we will not pursue. It is nevertheless a useful intuition that $\X(S)$ is a Hom object. In particular, we will later define for a map $\X\to \Y$ of TMPs the notion of a fiber object $\X_\gamma$ 
over an ``$S$-object'' $\gamma\in \Y(S)$ (Definition~\ref{def:fib}). 
\end{remark}

\begin{remark}
Associated to any topological moduli problem $\X$ is a so-called \emph{coarse moduli space $\X^{\t{coarse}}$}. This is the topological space which is the initial object in the category of pairs $(X,f:\X\to X)$ consisting of a topological space $X$ with a map $f:\X\to X$. In certain favourable cases, e.g. orbifolds, the coarse moduli space is the set $\pi_0\X(\pt)$ with a natural topology induced from $\X$. See \cite[\S4.3]{noohi} for more details.
\end{remark}

\begin{definition}
\begin{enumerate}
\item We say that a map of topological moduli problems $\X\to \Y$ is an \emph{equivalence} if $\X(S)\to \Y(S)$ is an equivalence of groupoids for all topological spaces $S.$ 
\item We say that a moduli problem $\X$ is \emph{representable} if there is a topological space $X$ and a map $\X\to X$ which is an equivalence.
\end{enumerate}
\end{definition}
Note that, if such a space $X$ exists, it is unique. Namely, given a moduli problem $\X,$ let $\X^{\t{iso}}$ be the moduli problem defined by $\X^{\t{iso}}(S): = \pi_0\X(S),$ the set of isomorphism classes of objects of $\X(S).$ It is clear that if $\X$ is representable, the map $\X\to \X^{\t{iso}}$ is an equivalence. In this situation $\X^{\t{iso}}\simeq\X^{\t{coarse}}$.

All the spaces of operations of the operads we use or construct in this paper, and in particular the spaces involved in the sequence of homotopy equivalences (\ref{eq:eq_sequence}), i.e., 
\[
\big(\widetilde{\t{Fr}}_\partial\big)_n, \big(\widetilde{\t{NodFr}}_\partial\big)_n, \t{DM}^{\t{tree}}_n, \big(\t{NodHD}^\t{tree}_{\t{protected}}\big)_n,
\] 
either naturally have an interpretation as topological moduli problems, or are equivalent to such. For example, $(\mathrm{Fr}_\partial)_n$ is equivalent to the moduli problem $(\MM\mathrm{Fr}_\partial)_n$, where $(\MM\mathrm{Fr}_\partial)_n(S)$ is the category whose objects are all bundles of surfaces $X\to S$ with fiberwise 
complex structure and 
boundary parametrization which varies continuously (as explained in Remark \ref{rmk:topology_on_fr}), and whose morphisms are isomorphisms of such data. The resulting moduli problem is representable by a topological space, as we have seen; this is also true for the spaces of operations of $\widetilde{\mathrm{Fr}}_\partial, \widetilde{\mathrm{Ann}}, \widetilde{\mathrm{NodAnn}}, \text{ and }\t{NodHD}^\t{tree}_{\t{protected}},$ though not for $\t{DM}^{\t{tree}}_{n},$ $\mathrm{NodFr}_\partial^{\t{tree}},$ or $\t{NodHD}^{\t{tree}},$ which are non-representable.

We will add the letter ``$\MM$'' as in $\MM\widetilde{\mathrm{Fr}}_\partial$, etc., to denote the topological moduli problem represented by the corresponding topological spaces. When working with a result that is invariant under equivalence of (non-isomorphic) topological moduli problems, we will sometimes drop the $\MM$ (as $\MM\mathrm{Fr}_\partial$ is equivalent to $\mathrm{Fr}_\partial,$ etc.)

The moduli space $\t{DM}^{\t{tree}}_{n}$ is described as a TMP as follows: given a topological space $S$, the groupoid $\t{DM}^{\t{tree}}_{n}(S)$ is the category whose objects are all families $X\to S$ of stable nodal surfaces whose dual graph is a tree, with fiberwise complex structure and marked points which vary continuously, and whose morphisms are isomorphisms of such data. More explicitly, this can be described by viewing $\overline{\cM}_{g,n}$ as an algebro-geometric stack which classifies stable nodal analytic curves of genus $g$ with $n$ marked points, and then further applying the analytification functor from algebraic varieties to topological spaces. See for example~\cite{harris-morrison}.

Denote $\mathrm{NodFr}_{\partial,n}^{\t{tree}}$ the moduli space of stable nodal framed surfaces whose dual graph is a tree and which have $n$ boundary components. This is described as a TMP as follows: given a topological space $S$, the groupoid $\mathrm{NodFr}_{\partial,n}^{\t{tree}}(S)$ is the category whose objects are all families $X\to S$ of stable nodal framed surfaces whose dual graph is a tree and which have $n$ boundary components, with continuously varying fiberwise complex structure,  marked points and parametrizations, and whose morphisms are isomorphisms of such data. We prescribe the local structure near the nodes by associating canonically to a stable nodal framed Riemann surface $\Sigma$ with $n$ analytically parametrized boundary components a closed stable nodal Riemann surface $\t{\bf cap}_2(\Sigma)$ with $2n$ marked points and requiring that the corresponding $S$-family is continuous in the previous sense. The association $\Sigma\mapsto \t{\bf cap}_2(\Sigma)$ is defined as follows: we glue a standard disc $D_i$ to the $i$-th boundary component of $\Sigma$ and we place one marked point $x_i$ at the center of the disc and one other marked point $y_i$ at $1\in S^1=\partial D_i$ for $i=1,\dots,n$. The notation $\t{\bf cap}_2$ is motivated by the fact that we cap and add $2$ marked points for each boundary component of $\Sigma$. The continuity of the parametrization of the boundary components is to be understood in light of the fact that, on the one hand, the boundary does not contain nodes and, on the other hand, any $S$-family is locally trivial away from the nodes.

The correspondence $\Sigma\mapsto \t{\bf cap}_2(\Sigma)$ defines a morphism of TMPs 
$$
\t{\bf cap}_2:\mathrm{NodFr}_{\partial,n}^{\t{tree}}\rightarrow \t{DM}^{\t{tree}}_{2n}. 
$$
We actually get an embedding of groupoids $\t{\bf cap}_2(S):\mathrm{NodFr}_{\partial,n}^{\t{tree}}(S)\hookrightarrow \t{DM}^{\t{tree}}_{2n}(S)$ for any $S$. 
It follows from the construction that this embedding has the following property: if $y_i$ is a marked point such that, upon forgetting it, the surface becomes unstable, then $y_i$ lives on a sphere component which contains exactly two other special points, of which one is a node and the other is a marked point denoted $x_i$. Equivalently, this situation corresponds to one of the irreducible components of a framed curve being a disc. 

We do not make explicit the description of $\t{NodHD}^{\t{tree}}$ as a TMP because it will not be needed. It can be inferred from the above.

\subsection{Reminders about groupoids and simplicial sets} \label{sec:reminders-groupoids-SSet}

We give some basic definitions and results on groupoids and their connection to homotopy theory. References can be found in~\cite{dwyer-kan}. See also~\cite{vistoli}.

\begin{definition}
  \begin{enumerate}
  \item A groupoid is \emph{discrete} if there are no morphisms except identity morphisms. The category of discrete groupoids is equivalent to that of sets.
  \item Given a groupoid $G$, its underlying discrete groupoid $G_\t{discr}$ is the discrete groupoid on the set of 
isomorphism classes of objects.
  \item A groupoid is \emph{quasi-discrete} if there are no automorphisms except identity morphisms. Equivalently, a groupoid $G$ is quasi-discrete if the map $G\to G_\t{discr}$ is an equivalence of categories. 
  \end{enumerate}
\end{definition}

Given a map of groupoids $G\to H$ and an object in $H$, we have a notion of \emph{naive fiber} and a notion of \emph{homotopy fiber} (directly analogous to that of fiber, respectively homotopy fiber in topology). More precisely:

\begin{definition} \label{defi:fibers}
Let 
$$
\pi:G\to H
$$
be a map of groupoids and let $y$ be an object in $H$.  
\begin{enumerate}
\item The \emph{(naive) fiber} of $\pi$ over $y$ is the groupoid $$G_y$$ with objects $\{x\in G\mid \pi(x) = y\}$ and morphisms $\{f:x\to x'\mid \pi(f) = \t{Id}_y\}$.  
\item The \emph{homotopy fiber} of $\pi$ over $y$ is the groupoid $\tilde{G}_y$ with objects pairs $\{(x,g)\mid g:\pi(x)\to y\}$.  Morphisms $(x,g)\to (x', g')$ are maps $f:x\to x'$ that make the following diagram commutative.
$$
\xymatrix
@C=30pt
{
\pi(x) \ar[r]^-{\pi(f)} \ar[d]^g & \pi(x') \ar[dl]^{g'} \\
y & 
}
$$
\end{enumerate}
\end{definition}

\begin{definition} We say that a functor $G\to H$ is a \emph{fibration} (resp., \emph{trivial fibration}) of groupoids if the canonical inclusion functor $G_y\to \tilde{G}_y$ is an equivalence for every $y$ (resp., $G\to H$ is a fibration and all fibers are equivalent to the trivial groupoid $*$).
\end{definition}

The property of being a fibration is equivalent to the following property. 

\begin{definition} \label{defi:Cartesian}
A functor $\pi:G\to H$ is \emph{Cartesian} if,  
for every two objects $x\in G$, $y\in H$ and every map $g:\pi(x)\to y$, there exists an object $x'\in G$ with $\pi(x')=y$ and a morphism $f:x\to x'$ lifting $g$. 

A functor $\pi:G\to H$ is \emph{trivial Cartesian} if it is Cartesian and an equivalence of categories.  
\end{definition}
The following result is straightforward.
\begin{proposition} \label{prop:Cartesian-fibration}
A map of groupoids $G\to H$ is Cartesian (resp., trivial Cartesian) 
iff it is a fibration (resp., trivial fibration). 
\qed
\end{proposition}

We will deal in the sequel with fibrations with quasi-discrete fibers.
\begin{definition}
A fibration of groupoids $G\to H$ is \emph{discrete}, resp., \emph{quasi-discrete}, if every fiber $G_y$ is a discrete, resp., a quasi-discrete groupoid. 
\end{definition}
We will need the following proposition.
\begin{proposition}\label{prop:quasi-discrete}
Every quasi-discrete fibration of groupoids $G\to H$ canonically factors through a 
discrete fibration $G\to G'
\to H$ such that $G\to G'$ is an equivalence.
\end{proposition}
\begin{proof}
We have a functor $F:H\to\t{Set}$ given by $y\mapsto (G_y)_{\t{discr}}$, the isomorphism classes of objects over $y$. Let $G'$ 
be the Grothendieck construction applied to this functor: this is the category with objects $(y, \bar{x})$ for $\bar{x}$ an isomorphism class in $G_y$ and morphisms $\bar{x}\to \bar{x}'$ corresponding to pairs $(\bar{x}, g)$ for $\bar{x}$ over $y$ and $g:y\to y'$ such that $F(g)(\bar{x}) = \bar{x}'.$ It is a direct check that the resulting functor 
$G' \to H$ is a fibration and that, for $G\to H$ quasi-discrete, the functor 
$G\to G'$ (given by $x\mapsto (\pi(x),\bar{x})$) is an equivalence.
\end{proof}

\begin{remark} 
There is a model category structure on groupoids for which 
fibrations are Cartesian functors and in particular any arrow can be Cartesian resolved (for example by replacing $G$ by the union $\sqcup_{y\in Y}\tilde{G}_y$). We will however not use a model category structure at this level --- rather, we will view groupoids as homotopy objects by taking the simplicial nerve (a.k.a., the classifying space).
\end{remark}

We now discuss groupoids in a simplicial context. 
The simplicial category $\Delta$ is the category of nonempty finite totally ordered sets $[n]=\{0,\dots,n\}$. A \emph{simplicial set} is a presheaf on $\Delta,$ i.e., a functor $\Delta^{op}\to \t{Set}$. Given a simplicial set $X:\Delta^{op}\to \t{Set}$, we write $X_n = X[n].$ More generally, a \emph{simplicial object} in a category $\mathcal{C}$ is a functor $\Delta^{op}\to \C.$ Recall from~\S\ref{sec:model-SSet} that 
simplicial sets form a model category~\cite[II.3, Theorem~3]{quillen}, which is Quillen equivalent to the Quillen model category on topological spaces (with equivalences given by weak homotopy equivalences). 
The equivalence is given by the pair of functors
$$|\cdot|:\t{SSet}\leftrightarrows \Top:C_\Delta,$$ with $|\cdot|$ the topological realization functor and $C_\Delta$ the 
singular functor 
given by $C_\Delta(X)_n: = \hom(\Delta^n_\t{top}, X),$ for $X$ any topological space and $\Delta^n_\t{top}$ the topological $n$-simplex. The simplicial morphisms between the $C_\Delta(X)$ arise from the fact that the $\Delta^n_\t{top}$, $n\ge 0$ form a \emph{cosimplicial} object in $\Top,$ i.e., we have a functor $\Delta^*_\t{top}:\Delta\to \Top.$

Groupoids can also be interpreted in the context of homotopy theory, via the \emph{nerve} construction.

\begin{definition}\label{defi:nerve}
The \emph{nerve} of a groupoid $G$ is the simplicial set 
$$
NG: [n]\mapsto \hom_{\t{Cat}}([n],G).
$$ 
\end{definition}
More concretely, $NG_n$ is the set of composable sequences of morphisms $x_0\to x_1\to\cdots\to x_n.$ Note that, if $G$ is a group, the topological realization $|NG|$ of the nerve of $G$ is a classifying space for $G$. 

We will need the following combinatorial result.
\begin{lemma}\label{simp_etale}
Suppose $G\to H$ is a discrete fibration, and let $NG\to NH$ be the associated map of nerves. A simplex $(x_0\to \dots \to x_n)\in NG_n$ in the preimage of a simplex $\sigma = (y_0\to \dots\to y_n)\in NH_n$ is uniquely determined by $x_0.$ 
\end{lemma}
\begin{proof}
This follows from the fact that a map $f:x\to x'$ in a discrete fibration is uniquely determined by $x$ and a morphism $\pi(f)$ from $\pi(x)$ to $\pi(x')$. 
\end{proof}

Simplicial sets also have a notion of homotopy fiber, which we call \emph{simplicial preimage} and which we now explain. 
Write $\Delta^n_\t{simp}$ for the $n$-simplex, i.e., the simplicial set represented by $[n]$. Given a simplicial set $X$, an element $\sigma\in X_n$ determines canonically a map of simplicial sets $\sigma_\t{simp}:\Delta^n_\t{simp}\to X$. 

\begin{definition}
Let $f:Y\to X$ be a map of simplicial sets. 
The \emph{simplicial preimage of an element $\sigma\in A_n$ under $f$} is the homotopy fiber product 
$$
Y_\sigma= \Delta^n_\t{simp}\times_{X}^h Y
$$ 
with respect to the simplicial map $\sigma_\t{simp}: \Delta^n_\t{simp}\to X$ determined by $\sigma$.
\end{definition}

The homotopy fiber product is the homotopy limit of the diagram $\Delta^n_\t{simp}\stackrel{\sigma_\t{simp}}{\longrightarrow} X \stackrel{f}{\longleftarrow} Y$ for the Quillen model category structure on $\t{SSet}$.
   
We will need the following standard result from homotopy theory, usually cited as a special case of Quillen's ``Theorem A'' from~\cite[\S1]{quillen_k}.

\begin{proposition} \label{prop:simplicial-we-fibers}
A map of simplicial sets $f:Y\to X$ is a weak equivalence if and only if, for any integer $n\ge 0$ and any simplex $\sigma\in X_n$, the simplicial preimage $Y_\sigma$ is weakly contractible.
\end{proposition}

\begin{proof}
The Quillen model structure on $\t{SSet}$ is \emph{right proper} in the sense of~\S\ref{sec:hocolim}. Given a right proper model category, the homotopy limit of a diagram $Z\rightarrow X \leftarrow Y$ can be computed by replacing any of the two maps by a fibration and taking the ordinary limit of the resulting diagram~\cite[Proposition~13.3.7]{hirschhorn} (compare with Lemma~\ref{lem:left-proper-pushout}). As a consequence, we can assume without loss of generality that the map $f$ in the statement is a fibration for the Quillen model structure on $\t{SSet}$, i.e., a Kan fibration of simplicial sets. In this case the simplicial preimages are computed as ordinary pullbacks. 

We now prove that a Kan fibration is a weak equivalence (also called \emph{trivial Kan fibration}) if and only if all its simplicial preimages are contractible. This is stated in~\cite[II.2]{quillen}. The proof relies on the equivalent characterization of trivial Kan fibrations as maps of simplicial sets that have the right lifting property with respect to all the inclusions $\p \Delta^k\hookrightarrow \Delta^k$, $k\ge 0$ (\cite[II.3, Theorem~3]{quillen}, \cite[Theorem~11.2]{goerss-jardine}). We prove the direct implication: given a trivial Kan fibration $f:Y\to X$, its pullback along any map of simplicial sets is also a trivial Kan fibration because it has the right lifting property with respect to all inclusions $\p \Delta^k\hookrightarrow \Delta^k$. In particular, any simplicial preimage of $f$ is a trivial Kan fibration over a simplex, hence weakly contractible because the simplex is weakly contractible. We prove the converse implication: let $f:Y\to X$ be a Kan fibration and assume that all its simplicial preimages $Y_\sigma$ are weakly contractible. This is equivalent to saying that the canonical map $Y_\sigma\to \Delta^n$ is a trivial Kan fibration for all $\sigma\in X_n$, $n\ge 0$. We wish to construct the dotted arrow in the first diagram below. Enhancing it to the second diagram below, the right lifting property for the map $Y_\sigma\to\Delta^k$ with respect to the inclusion $\p \Delta^k\hookrightarrow \Delta^k$ determines a lift $\Delta^k\to Y_\sigma$. This lift can be post-composed with the map $Y_\sigma\to Y$ in order to obtain the desired lift $\Delta^k\to Y$. 
$$
\xymatrix{
\p \Delta^k \ar[rr] \ar@{^(->}[dd] & & Y \ar@{->>}[dd] \\
& & \\
\Delta^k\ar[rr]_\sigma \ar@{.>}[uurr] && X
}
\qquad 
\xymatrix{
\p \Delta^k \ar[dr] \ar[drrr] \ar[dd] & & & \\
& Y_\sigma=\sigma^*Y \ar@{^(->}[dd] \ar[rr] & & Y \ar@{->>}[dd]^f \\ 
\Delta^k \ar@{=}[dr] \ar@{.>}[ur] & & & \\
& \Delta^k \ar[rr]_\sigma & & X
}
$$
\end{proof}

In the next section we will also use the category of bisimplicial sets, defined as follows.
\begin{definition}\label{defi:bisimp}
The category of bisimplicial sets $$\t{S}^2\t{Set}:= \t{Fun}((\Delta^{op})^2,\t{Set})$$ is the category of presheaves on the \emph{bisimplicial category} $(\Delta^{op})^2 = \Delta^{op}\times \Delta^{op}.$ 
\end{definition}
Given a bisimplicial set $X,$ we write $X_{m,n}:= X([m]\times [n]).$
Note that, by adjunction, $\t{S}^2\t{Set}$ is equivalent to the category 
\[\t{Fun}\left(\Delta^{op}, \t{Fun}\left(\Delta^{op}, \t{Set}\right)\right)\] of simplicial objects in the category of simplicial sets. Here, if $X:\Delta^{op}\to \t{SSet}$ is a simplicial object in simplicial sets, the associated bisimplicial set has $X^\t{Bi}_{m,n}: = (X_m)_n,$ where $X_m$ is the simplicial set $X([m]).$

Given a bisimplicial set $X_{*,*}:(\Delta^{op})^2\to \t{Set}$, we have an associated ``total'' simplicial set defined as follows.
\begin{definition}\label{defi:sset_totalization} Define $$X^\t{tot}:= X\circ \t{Diag}_{\Delta^{op}}:\Delta^{op}\to \t{Set},$$ where the functor $\t{Diag}_{\Delta^{op}}:\Delta^{op}\to (\Delta^{op})^2$ is the diagonal functor.
\end{definition}

There are several different Quillen equivalent model structures on the category $\t{S}^2\t{Set}$ of bisimplicial sets, and all of them are Quillen equivalent to the category of simplicial sets via the totalization functor (see~\cite[Chapter~IV]{goerss-jardine}). In particular, the simplicial set $X^\t{tot}$ contains the same ``topological'' information as the bisimplicial set $X.$

\subsection{Simplicial realization of a topological moduli problem}
We will not attempt to construct a model category (or even a homotopy category) of topological moduli problems\footnote{Though note that topological moduli problems are a full subcategory of the category of simplicial presheaves on topological spaces, which has a structure of model category Quillen equivalent to that of simplicial sets, and it would be possible to work within this framework.}. Instead, we will associate to every topological moduli problem $\X$ 
a simplicial set\footnote{This is actually an $\infty$-groupoid.} $C_\Delta(\X)$
of ``simplicial chains'' and view $\X$ as represented by $C_\Delta(\X)$ in a homotopy theoretic sense. 

We have a standard covariant \emph{realization functor} $$R:\Delta\to \Top,$$ taking $[n]$ to the $n$-simplex $\Delta^n.$ We define the simplicial groupoid $C^\t{Gpd}_\Delta(\X)$
to be the strict functor 
$$
C^\t{Gpd}_\Delta(\X)=\X\circ R^{op}:\Delta^{op}\to \t{Gpd}.
$$ 
Note that, for $\X = X$ the moduli problem represented by a topological space, $C^\t{Gpd}_\Delta(X)$ is canonically equivalent to the levelwise discrete simplicial groupoid associated to the singular simplicial set of $X$, denoted $C_\Delta(X)$. 

Composing with the nerve functor (Definition \ref{defi:nerve}) $N:\t{Gpd}\to \t{SSet}$ from groupoids to simplicial sets we obtain a functor 
$$
N\circ C_\Delta^\t{Gpd}(\X):\Delta^{op}\to \t{SSet}.
$$ 
By definition, such a functor is equivalent to a bisimplicial set.   
We call it \emph{the bisimplicial realization} associated to the topological moduli problem $\X$, and denote it 
$$
C_\Delta^\t{Bi}(\X):(\Delta^{op})^2\to \t{Set}.
$$
\begin{definition}
For a topological moduli problem $\X,$ we denote $$C_\Delta(\X)$$ the simplicial set given by the totalization of the 
bisimplicial set $C_\Delta^\t{Bi}(\X)$. We call $C_\Delta(\X)$ the \emph{classifying simplicial set} of $\X$. 
\end{definition}
The assignment $\X\to C_\Delta(\X)$ is natural and defines a covariant functor from the category of topological moduli problems to simplicial sets. To illustrate the definition we give two examples, which are the extreme cases of topological moduli problems.
\begin{itemize}
\item If $\X = X$ is the moduli problem represented by a topological space, the bisimplicial set $C_\Delta^\t{Bi}(\X)$ canonically decouples as a product of two functors $\Delta^{op}\to \t{Set},$ namely $C_\Delta(X)\times \pt$. Thus the associated simplicial set $C_\Delta(\X)$ is canonically isomorphic to the usual simplicial set of singular simplices $C_\Delta(X),$ further justifying our frequent abuse of notation of using the same term for the space $X$ and the topological moduli problem $\X$ which it represents. 
\item If $\X=\pt/G$ is a point with a discrete group of automorphisms $G$, the bisimplicial set $C_\Delta^\t{Bi}(\X)$ canonically decouples as a product $\pt \times N G$, where $N$ is the nerve functor, i.e., the simplicial classifying space functor.
\end{itemize}
Thus, the totalization $C_\Delta(\X)$ combines the topological and  the groupoid features of the topological moduli problem $\X$.  

The \emph{classifying space functor} $\X\mapsto |C_\Delta(\X)|$ allows us to view any topological moduli problem as functorially represented by a topological space. However, as this topological space is big and difficult to work with, we avoid working with it directly. Instead we make comparisons in the category of topological moduli problems, and identify certain properties of maps of topological moduli problems which guarantee that the underlying maps of classifying spaces are homotopy equivalences.

\subsection{Trivial Serre fibrations of topological moduli problems}
To make sense of results such as Lemma~\ref{lem:from-DM-to-NodFr} in higher genus, we need a notion of fiber for certain maps of topological moduli problems. 

\begin{definition}\label{def:fib} 
A map $\X\to \Y$ of TMP is a \emph{combinatorial fibration} if $\X(S)\to \Y(S)$ is a fibration of groupoids for each $S$.
\end{definition}
Informally, if $\pi:\X\to\Y$ is a map of TMP, if $\X(S)$ and $\Y(S)$ are defined as some geometric data on $S$ up to isomorphism, and if $\pi(S)$ is a ``forgetful" map which discards some part of the geometric data, then $\pi$ is a combinatorial fibration. Indeed, given an isomorphism $i:\alpha\to\beta$ between two structures $\alpha,\beta\in \Y(S)$, and given an object $\tilde{\beta}$ of $\X(S)$ over $\beta$ which consists of some additional geometric data, we can pull back this geometric data under $i$ to produce a diagram verifying the fibration property. Using this idea we can prove the following proposition.

\begin{proposition}\label{prop:combifib}
For any $n,$ the maps \[\t{\bf cap}: \t{NodFr}^\t{tree}_{\partial,n}\to \t{DM}^\t{tree}_n\] and 
\[\boldsymbol{\pi}: \MM \t{NodHD}^\t{tree}_{\t{protected}, n}\to \widetilde{\t{NodFr}}_{\partial, n}^\t{tree}\] are combinatorial fibrations.
\end{proposition}

\begin{proof}
We first discuss the map $\t{\bf cap}$. This is the composition of the maps 
\[
\t{NodFr}^\t{tree}_{\partial,n}\stackrel{\t{\bf cap}_2}\longrightarrow \t{DM}^\t{tree}_{2n}\stackrel{\t{\bf forget}_n}\longrightarrow \t{DM}^\t{tree}_n,
\]
where $\t{\bf cap}_2$ was defined in~\S\ref{sec:appA-definitions} and $\t{\bf forget}_n$ is the map which forgets the marked points $y_1,\dots,y_n$ (the marked points for $\t{DM}^\t{tree}_{2n}$ are denoted $x_1,\dots,x_n,y_1,\dots,y_n$, cf.~\S\ref{sec:appA-definitions}). Note that the image of $\t{\bf cap}_2$ is contained in $\t{DM}^\t{tree,\prime}_{2n}$, which is the TMP such that, for each $S$, the groupoid $\t{DM}^\t{tree,\prime}_{2n}(S)$ is the full subgroupoid of $\t{DM}^\t{tree}_{2n}(S)$ consisting of families of curves with the property that, if $y_i$ is a marked point such that, upon forgetting it, the underlying surface becomes unstable, then $y_i$ lives on a sphere component which contains exactly two other special points, of which one is a node and the other is a marked point denoted $x_i$. We claim that 
\[
\t{NodFr}^\t{tree}_{\partial,n}\stackrel{\t{\bf cap}_2}\longrightarrow \t{DM}^\t{tree,\prime}_{2n}\stackrel{\t{\bf forget}_n}\longrightarrow \t{DM}^\t{tree}_n
\]
is a composition of combinatorial fibrations, hence is a combinatorial fibration. In view of Proposition~\ref{prop:Cartesian-fibration}, it is enough to show that, for any $S$, the maps  
\[
\t{NodFr}^\t{tree}_{\partial,n}(S)\stackrel{\t{\bf cap}_2(S)}\longrightarrow \t{DM}^\t{tree,\prime}_{2n}(S)\stackrel{\t{\bf forget}_n(S)}\longrightarrow \t{DM}^\t{tree}_n(S)
\]
are Cartesian (Definition~\ref{defi:Cartesian}). 
\begin{itemize}
\item We prove that the map $\t{\bf cap}_2(S)$ is Cartesian. Given two isomorphic $S$-families in $\t{DM}^\t{tree,\prime}_{2n}(S)$ with a preferred isomorphism $g$ and a lift to $\t{NodFr}^\t{tree}_{\partial,n}(S)$ of the first family, we can reinterpret that lift as the data of a continuously varying family of analytically parametrized simple curves in the fibers. The isomorphism $g$ can then be used in order to produce such a continuously varying family of analytically parametrized simple curves in the fibers of the second family. This provides a lift of the second family, as well as a lift of $g$. 
\item We prove that the map $\t{\bf forget}_n(S)$ is Cartesian. Consider two isomorphic $S$-families $E,F\in \t{DM}^\t{tree}_n(S)$, a preferred isomorphism $g:E\stackrel\simeq\longrightarrow F$, and a lift of $E$ to $\t{DM}^\t{tree,\prime}_{2n}(S)$ denoted $\tilde E$. Let $x_{1,s},\dots,x_{n,s}$ be the marked points of the curve $E_s$, $s\in S$, and $\tilde x_{1,s},\dots,\tilde x_{n,s},y_{1,s},\dots,y_{n,s}$ the marked points of its lift $\tilde E_s$. At each point $s\in S$ and for each $i=1,\dots,n$ one of the following two things can happen: either the removal of the point $y_{i,s}$ does not destabilize the irreducible component of $\tilde E_s$ on which it lies, or it destabilizes it, in which case that component is a sphere which is contracted under the forgetful map, the marked point $\tilde x_{i,s}$ becoming $x_{i,s}$. Equivalently, the lift $\tilde E_s$ differs from $E_s$ by either adding some non-destabilizing marked points $y_{i,s}$, or by blowing up some marked points $x_{i,s}$ into spheres which acquire marked points $\tilde x_{i,s}$ and $y_{i,s}$. We can then define a lift $\tilde F$ of $F$ as follows: we mark additional points $y'_{i,s}=g(y_{i,s})$ for all indices $i$ such that $y_{i,s}$ is not destablizing, and we blow up the points $x'_{i,s}=g(x_{i,s})$ for all indices $i$ such that $y_{i,s}$ is destabilizing, marking two new points $\tilde x'_{i,s}$ and $y'_{i,s}$ on the resulting sphere bubble. Since any two spheres with three marked points are uniquely isomorphic, the isomorphism $g$ extends uniquely to an isomorphism $\tilde g:\tilde E\stackrel\simeq\longrightarrow \tilde F$. 
\end{itemize}
This proves that the map $\t{\bf cap}$ is a combinatorial fibration.  
The proof that the map $\boldsymbol{\pi}$ is a combinatorial fibration is very similar to the proof for $\t{\bf cap}_2$, and we omit the details.  
\end{proof}

\begin{definition}[Fibers of topological moduli problems]\label{def:fibrep} \qquad 

Let $\pi:\X\to \Y$ be a map of TMP.
\begin{enumerate}
\item Given a topological space $I$ and an object $\gamma\in \Y(I)$ (to be thought of as a map $I\to \Y$), \emph{the fiber} $\X_\gamma$ is the topological moduli problem with $\X_\gamma(S): = \bigsqcup_{t:S\to I} \X(S)_{t^*\gamma}$. Here $t^*\gamma$ is the pullback in $\Y(S)$ (to be thought of as the composition $\gamma\circ t$) and $\X(S)_{t^*\gamma}$ is the fiber for the map of groupoids $\X(S)\to \Y(S).$

The fiber $\X_\gamma$ is endowed with a canonical map $\X_\gamma\to I$. 
\item We say that the map $\pi$ is a \emph{fiberwise representable fibration} if it is a combinatorial fibration and,
for any topological space $I$ and object $\gamma\in \Y(I)$, the fiber $\X_\gamma$ is quasi-discrete (i.e., equivalent to a topological space). We say that $\pi$ is \emph{strictly fiberwise representable} if any such fiber $\X_\gamma$ is discrete. 
\end{enumerate}
\end{definition}
It follows from Proposition~\ref{prop:quasi-discrete} that any fiberwise representable fibration is equivalent to one which is strictly fiberwise representable.

\begin{definition}\label{def:fibtriv}
Let $\pi:\X\to \Y$ be a map of TMP which is a fiberwise representable fibration. We say that $\pi$ is a  \emph{fiberwise representable trivial weak fibration} if, for any $m\ge 0$ and any simplex $\sigma\in \Y(\Delta^m)$, the fiber $\X_\sigma$ is weakly contractible.
\end{definition}
\begin{lemma}\label{lm:fib_heq} 
Suppose that $\pi:\E\to \X$ is a fiberwise representable trivial weak fibration. 
Then the induced map of simplicial sets $C_\Delta(\E)\to C_\Delta(\X)$ is a weak homotopy equivalence. 
\end{lemma}
\begin{proof}
As noted above, we can assume without loss of generality that $\E\to \X$ is strictly fiberwise representable. In particular, the fibers of $\E(S)\to \X(S)$ are discrete for any $S$.  Now we consider the map of bisimplicial sets $C_\Delta^\t{Bi}(\E)_{m,n}\to C_\Delta^\t{Bi}(\X)_{m,n},$ where $m$ is the simplex degree (corresponding to the category $\X(\Delta^m_\t{top})$ of maps from the topological simplex) and $n$ is the nerve degree. It is enough to show that, at a fixed nerve degree $n$, the map of simplicial sets $C_\Delta^\t{Bi}(\E)_{*,n}\to C_\Delta^\t{Bi}(\X)_{*,n}$ is a weak homotopy equivalence. We do this in two steps.

\noindent {\bf Case $n = 0$.} Since we set the nerve degree to zero, we are considering the simplicial set on \emph{objects} of $\X(\Delta^m_\t{top})$ (and similarly for $\E$). We use the following standard fact.

\begin{proposition} \label{prop:Serre-Kan-trivial}
Let $f:U\to V$ be a map of topological spaces. The following statements are equivalent: 
\begin{itemize}
\item $f$ is a trivial Serre fibration, i.e., a Serre fibration with weakly contractible fibers. 
\item the map of simplicial sets $C_\Delta(f):C_\Delta(U)\to C_\Delta(V)$ is a trivial Kan fibration in the sense of~\S\ref{sec:model-SSet}. 
\item the map of simplicial sets $C_\Delta(f):C_\Delta(U)\to C_\Delta(V)$ is a Kan fibration that has weakly contractible simplicial preimage $C_\Delta(U)_\sigma$ over any $m$-simplex $\sigma\in C_\Delta(V)$.
\end{itemize}
\end{proposition}

\begin{proof}[Proof of Proposition~\ref{prop:Serre-Kan-trivial}] That $f$ is a trivial Serre fibration if and only if $C_\Delta(f)$ is a trivial Kan fibration is proved in~\cite[II.3, Lemma~2]{quillen}, see also~\cite[Example~5.15]{heuts-moerdijk}.\footnote{It is also true that $f$ is a Serre fibration if and only if $C_\Delta(f)$ is a Serre fibration~\cite[Example~5.11]{heuts-moerdijk}.} On the other hand, we proved in Proposition~\ref{prop:simplicial-we-fibers} that a Kan fibration is trivial if and only if all its simplicial preimages are weakly contractible. 
\end{proof}

{\it Proof of Lemma~\ref{lm:fib_heq} continued.} 
Given an object $\sigma_\t{top}\in \X(\Delta^m_\t{top})$, we apply Proposition~\ref{prop:Serre-Kan-trivial} to the pair of topological spaces $V = \Delta^m_\t{top}$ and $U = \E_{\sigma_\t{top}}$, the fiber of $\E$ over $V$. We are using that, by strict fiberwise representability, $\E_{\sigma_\t{top}}$ is isomorphic to the (discrete) topological moduli problem represented by a space. We deduce that all simplicial preimages of simplices in $C_\Delta^\t{Bi}(\X)_{*,0}$ under the map $C_\Delta^\t{Bi}(\E)_{*,0}\to C_\Delta^\t{Bi}(\X)_{*,0}$ are weakly contractible, hence we get a weak homotopy equivalence of simplicial sets at $n = 0$.

\noindent {\bf Case $n\ge 0.$} From Lemma~\ref{simp_etale} we see that all simplicial preimages of the map $C_\Delta^\t{Bi}(\E)_{*,n}\to C_\Delta^\t{Bi}(\X)_{*,n}$ are isomorphic to simplicial preimages of 
$C_\Delta^\t{Bi}(\E)_{*,0}\to C_\Delta^\t{Bi}(\X)_{*,0},$ the corresponding map at $n = 0.$ We conclude by the $n = 0$ case. 
\end{proof}

\section{The dendroidal category and Segal operads} \label{sec:dendroidal}
In Proposition~\ref{prop:from-protected-to-unprotected-Segal} 
we prove that, for any number $n$ of inputs, there is a map $$\boldsymbol{\pi}:\MM\t{NodHD}^\t{tree}_{\t{protected}, n}\to \widetilde{\t{NodFr}}_{\partial, n}^\t{tree}$$ of topological moduli problems which induces a homotopy equivalence on underlying simplicial sets. Now the LHS is a model (in topological spaces) for the homotopy pushout of the diagram $\pt\from S^1\to \t{Fr}_\partial,$ and we also know from Lemma~\ref{lem:from-DM-to-NodFr-Segal} 
that the RHS is equivalent to $\t{DM}^{\t{tree}}_n$. 
In this appendix we explain how to ``upgrade" such homotopy equivalences on the level of spaces of operations to homotopy equivalences on the level of operads. 
For this we need to discuss operads valued in topological moduli problems. A convenient language in this context is the formalism of dendroidal objects and Segal operads (Cisinski-Moerdijk~\cite{cisinski-moerdijk-dendroidal-sets}), which is a relative of Lurie's theory of $\infty$-operads~\cite{lurie-higher-algebra}. We explain this formalism, how the operad objects and maps we have been working with, e.g., the operad $\t{DM}^\t{tree}$ and the map $\t{DM}^\t{tree}\to \t{NodFr}^\t{tree}_\partial$, fit into it, and how this formalism is compared to that of topological operads. 

The reason for using Segal operads is that the classical language of operads is not well adapted to deal with spaces of operations 
that are topological moduli problems. Indeed, if we tried to impose an operad structure, then the associativity relations would need to be equalities of functors, whereas in our situation we only have isomorphisms.  Said differently, we need to use the 2-category structure on groupoids to define operads such as $\t{DM}^\t{tree}$. 
(A useful analogy is symmetric monoidal groupoids vs. abelian monoids.)

\subsection{The Moerdijk-Weiss tree category}

Here we describe the dendroidal category $\Omega$ of Moerdijk and Weiss~\cite{moerdijk-weiss2007}, 
which plays in higher operad theory a role analogous to that played in homotopy theory by the simplicial category $\Delta$. In the same way that objects of $\Delta$ can be understood as combinatorial categories (finite ordered posets), the objects of $\Omega$ are combinatorial operads associated to trees.

\begin{definition}[The free colored operad on a tree] \label{defi:free-colored-operad-on-tree}
Let $\tau$ be a tree of operations (i.e., a finite tree with half-edges, see~\S\ref{sec:operad_of_labeled_rooted_trees}). The free colored operad $[\tau]$ on $\tau$ is the operad with colors $C = \mathrm{Edge}_\tau$ and operations generated by a set $\{o_v\}$ indexed by vertices $v\in \mathrm{Vert}_\tau,$ where $o_v$ has inputs $e_1^\t{in}(v),\dots, e_{|v|}^\t{in}(v)$, 
the incoming edges at $v$ with some choice of ordering, and output $e^\t{out}(v)$, the outgoing edge at $v$. 
\end{definition}

In this definition the generators are described in terms of a choice of ordering of the incoming edges at each vertex $v$, or equivalently in terms of a choice of planar structure for the tree. A different choice of planar structure would give different generators, but the same operad.

When $\tau$ is the linear tree $\tau_{[n]}:= \  \to \bullet\to\dots\to \bullet\to$ with $n$ vertices, the colored operad only has $1\to 1$ operations, i.e., is a category. In fact, it is the category associated to the quiver corresponding to the dual graph: it has $n+1$ objects associated to the linear tree and a unique arrow $i\to j$ for $i\le j$. in other words, it is precisely the linear poset category $[n]$. 

For a general tree $\tau,$ one can check that operations in $[\tau]$ are indexed by full labeled subtrees. Namely, we say that a subgraph $\tau'\subset \tau$ is \emph{full} if every edge or half-edge in $\tau$ with an endpoint in $\tau'$ is also an edge or a half-edge in $\tau'$
(note that a half-edge in $\tau'$ might have another endpoint in $\tau$). The operad $[\tau]$ has a (unique) operation with inputs $e_1,\dots, e_n$ and output $e^\t{out}$ if and only if there is a full labeled sub-tree $\tau'$ with leaves $e_1,\dots, e_n$ and root $e^\t{out}.$

\begin{definition}
The \emph{dendroidal category} $\Omega$ is the full subcategory of the category of colored operads on the objects $[\tau]$ as $\tau$ ranges over all trees.
\end{definition}

The next definition describes objects which generate $\Omega$ in a sense that we will make precise. 

\begin{definition} \qquad 

\begin{itemize}
\item Define $\mid$ to be the tree with one edge and no vertices. The operad $[\mid]\in \Omega$ is the operad with a single identity operation.
\item Define the \emph{$n$-corolla} $\kappa_n$ to be the tree of operations with one vertex, $n$ input edges, and one output edge. The operad $[\kappa_n]$ is the operad whose spaces of operations are empty except in arity $n$, where $\fS_n$ acts freely and transitively, i.e., $[\kappa_n]$ has a unique-up-to-relabeling operation in arity $n$. 
\end{itemize}
\end{definition}
Note that for a general colored operad $O$, maps $[\mid]\to O$ are in bijection with colors of $O$ and maps $[\kappa_n]\to O$ correspond to all operations with $n$ inputs (with arbitrary incoming and outgoing colors). In particular, if $\tau$ is a tree then $\hom([\mid], [\tau]) = \mathrm{Edge}_\tau$ and $\hom([\kappa_n],[\tau])$ is the set of full subtrees of $\tau$ with $n$ leaves. We note two important special cases.

\begin{definition}  \qquad 

\begin{itemize}
\item Given a vertex $v$, define $i_v:[\kappa_{|v|}]\to [\tau]$ to be the map corresponding to the subtree of edges around $v$ (equivalently, the map classifying the operation $v$ in $[\tau]$). 
\item For any tree $\tau$ with $n$ leaves, define the \emph{cocontraction map} $s_\tau:[\kappa_n]\to [\tau]$ to be the map classifying the operation corresponding to the tree $\tau$ itself viewed as a subtree (i.e., the composition of all operations in $\tau$).
\end{itemize}
\end{definition}

For any full subtree $\tau'\subset \tau,$ let $\tau''$ be the tree obtained by contracting all internal edges of $\tau'$ into a single point. Then there are obvious maps $i_{\tau,\tau'}:[\tau']\to [\tau]$ and $s_{\tau,\tau'}:[\tau'']\to [\tau]$ defined by extending $i_v$ for $v\in \tau'$, resp., $s_{\tau'},$ by the identity operation for all vertices in the complement 
of $\tau'.$ Maps of the form $i_{\tau,\tau'}, s_{\tau,\tau'}$ together with automorphisms of trees generate the tree category $\Omega$~\cite[\S3]{moerdijk-weiss2007}.

Let $\C$ be a category with monoidal structure given by products and $O$ a monochromatic operad in $\C$. This induces a contravariant representation of the dendroidal category $\Omega$ in $\C$, sending every tree $\tau$ to the set $\prod_{v\in \t{Vert}(\tau)} O_{|v|}$ and sending the cocontraction map $s_\tau$ to the operad composition operation $\prod_{v\in \t{Vert}(\tau)} O_{|v|}\to O_N$ (for $N$ the number of leaves). In particular, this representation determines the operad $O$ (the identity operation is the ``empty'' composition operation corresponding to $\tau = \ \mid$ and the action of $\sigma\in \fS_n$ on $O_n$ is recovered from the morphisms $\kappa_\sigma: \kappa_n\to \kappa_n$ which permute the incoming edges of a corolla). 

\subsection{Segal Operads and Pre-Operads} \label{sec:Segal-op-and-preop}

In this subsection we follow Cisinski-Moerdijk~\cite{cisinski-moerdijk-dendroidal-sets} and work with models for colored operads, despite the fact that the objects we care about are monochromatic. 

Suppose $\C$ is a category which is closed under products and coproducts and let $\t{Discr}_\C$ be the essential image of the standard functor $\t{Set}\to \C$ given by unions of the terminal object $*\in \C.$ 

\begin{definition}
A \emph{Segal pre-operad} $\Lambda$ in $\C$ is a $\C$-valued presheaf on $\Omega$, i.e., a functor $\Lambda:\Omega^{op}\to \C$ such that $\Lambda([\mid]) \in \t{Discr}_\C.$ 
\end{definition}
One can think of Segal pre-operads as colored operads with non-uniquely defined compositions; $\Lambda([\mid])$ is the set of colors. When $\Lambda$ is a monochromatic operad, it has a model with $\Lambda([\mid]) \cong *$.

If $\C$ is additionally endowed with a class of equivalences including $\t{Iso}_\C$ (for example, if $\C$ is a model category), we make the following definition.

\begin{definition}
A \emph{Segal operad in $\C$} is a Segal pre-operad in $\C$ such that, for every tree $\tau$, the product of the maps 
\begin{align}\label{eq:segal-tree}\Lambda(i_v):\Lambda([\tau])\to \Lambda([\kappa_{|v|}])
\end{align} 
is an equivalence between $\Lambda([\tau])$ and $\prod_{v\in \t{Vert}(\tau)} \Lambda([\kappa_{|v|}])$. 
\end{definition}

Note that an ordinary (strict) colored operad in $\C$ is precisely a Segal pre-operad where this map is an equality (i.e., where $\Lambda([\tau])$ is \emph{defined} as being $\prod_{v\in \t{Vert}(\tau)} \Lambda([\kappa_{|v|}])$). Given an ordinary operad $O$, we write $\t{Seg}(O)$ for the Segal operad associated to $O$, with
$$\t{Seg}(O)([\tau]) : = \prod_{v\in \t{Vert}(\tau)}O([\kappa_{|v|}]).$$

In fact, Segal (pre)operads have a model category structure which is related to the Berger-Moerdijk model category of topological operads 
(and also to Lurie's $\infty$-operads~\cite[\S2]{lurie-higher-algebra}) by a chain of Quillen equivalences. This is proved by Chu, Haugseng, and Heuts in the paper~\cite{chu-haugseng-heuts}, which relies on \cite{heuts-hinich-moerdijk}, \cite{cisinski-moerdijk-dendroidal-segal-spaces}, \cite{cisinski-moerdijk-dendroidal-sets}, \cite{barwick}. 
Since we will only be interested in the weak equivalences, we will not need the full model structure. Rather, we need the following results 
from~\cite{cisinski-moerdijk-dendroidal-segal-spaces,cisinski-moerdijk-dendroidal-sets}.

\begin{theorem}[\cite{cisinski-moerdijk-dendroidal-segal-spaces,cisinski-moerdijk-dendroidal-sets}] \qquad
\label{thm:CM}

\begin{enumerate}
  \item There is a ``strictification'' functor 
$$
  W_{\t{Seg}}:\t{PreSegOp}_
  \cC
  \to \t{Op}_
  \cC
$$
from Segal pre-operads in $\cC$ to strict operads in $\cC$, which is equivalent to the $W$-construction when applied to strict operads. 
In other words, we have a canonical isomorphism of functors
    \[W_{\t{Seg}}\circ \t{Seg}\cong W:
    \t{Op}_
    \cC
    \to \t{Op}_
    \cC.
    \]
\item There is a natural transformation $\t{Seg}\circ W_{\t{Seg}}\Rightarrow \t{Id}$ such that, 
for every Segal pre-operad $\Lambda\in \t{PreSegOp}_\cC$, the natural map $\t{Seg}\circ W_{\t{Seg}}(\Lambda)\to \Lambda$ is an equivalence in 
$\t{PreSegOp}_\cC$ (and, in particular, an equivalence on the level of each tree if $\Lambda$ is a Segal operad). \qed
    \end{enumerate}
\end{theorem}

\subsection{Segal operads in topological moduli problems} 
We will be interested in the category of Segal operads in topological moduli problems.
More precisely, we can define Segal operads in topological moduli problems fitting into a diagram
\[
\Lambda \MM\t{NodHD}^\t{tree}_{\t{protected}}\to \Lambda \widetilde{\t{NodFr}}^\t{tree}_\partial\from \Lambda \t{DM}^\t{tree}.
\]

The definition of the Segal operad $\Lambda \t{DM}^\t{tree}$ is the following. We need to define a topological moduli problem $\Lambda \t{DM}^\t{tree}([\tau])$ for every tree $\tau.$ For a test topological space $S$, we define $\Lambda \t{DM}^{\t{tree}}([\tau])(S)$ to be the collection of families $X\to S$, where $X$ is a family over $S$ of \emph{possibly disconnected} stable
nodal curves, mapping to the discrete set $\t{Vert}(\tau),$ with marked points indexed by half-edges of $\tau$ (including two half-edges for each full edge) and such that the component $X_v$ over a vertex $v\in \t{Vert}_\tau$ has incoming marked points indexed by incoming edges $\t{In}(v),$ and outgoing marked point indexed by the outgoing edge $\t{Out}(v).$ Maps between two objects of $\Lambda \t{DM}^\t{tree}([\tau])(S)$ are isomorphisms over $S$ that preserve all combinatorial data. 
In order to define the Segal structure we need to write down functors 
$$
\Lambda \t{DM}^\t{tree}(i_{\tau,\tau'})(S):\Lambda \t{DM}^\t{tree}([\tau])(S)\to \Lambda \t{DM}^\t{tree}([\tau'])(S)
$$ 
corresponding to restriction to a full subtree $\tau'\subset \tau$, and 
$$
\Lambda \t{DM}^\t{tree}(s_{\tau, \tau'})(S): \Lambda \t{DM}^\t{tree}([\tau])(S)\to \Lambda \t{DM}^\t{tree}([\tau''])(S)
$$ 
corresponding to contraction of a full internal subtree $\tau'\subset \tau.$ The former is simply given by removing all components of $X$ corresponding to vertices not in $\tau',$ and the latter by gluing all curves in the component $\tau'$ along their common edges to combine them into a nodal curve, then stabilizing\footnote{A note on set-theoretic issues. Note that both the procedures of gluing and stabilizing replace the set indexing the old complex curve $X$ by a quotient set, whose points are subsets of $X$, something that is possible in the $\kappa$-small world for $\kappa$ an inaccessible cardinal. In order to avoid sets with repeated indexes, we require that the set of points of $X$ must not contain an element which is a subset of $X$ of cardinality $\ge 2$.}.

Proceeding in a similar fashion, we also get Segal operads
\[\Lambda \MM\widetilde{\t{Fr}}_\partial, \Lambda \widetilde{\t{NodFr}}^\t{tree}_\partial, \Lambda \MM\t{NodHD}^\t{tree}_{\t{protected}}.\]
Note that, since $\widetilde{\t{Fr}}_\partial$ and $\t{NodHD}^\t{tree}_{\t{protected}}$ are topological spaces, we have equivalences $\Lambda\MM\widetilde{\t{Fr}}_\partial\simeq \t{Seg}(\widetilde{\t{Fr}}_\partial)$ and similarly for $\t{NodHD}^\t{tree}_{\t{protected}}$.

We finally have the tools to give concrete statements comparing the homotopy pushout $\t{NodHD}^\t{tree}_{\t{protected}}$ and $\t{DM}^\t{tree}$.

\begin{theorem} \label{thm:NodHDDM-Appendix}
There is a sequence of Segal operads of topological moduli problems, with maps that are levelwise weak equivalences (on the level of simplicial chains):
    \[\Lambda\MM\widetilde{\t{NodHD}}^\t{tree}_{\t{protected}}\to \Lambda\widetilde{\t{NodFr}}^\t{tree}_\partial\from \Lambda\t{DM}^{\t{tree}}.\]
    The first map is induced by applying component-wise the map $\boldsymbol{\pi}$, and the second map is induced by applying component-wise the map $\boldsymbol{\t{funnel}}$.
 \end{theorem}
 
\begin{proof} This is the combined conclusion of Lemma~\ref{lem:from-DM-to-NodFr-Segal} and Proposition~\ref{prop:from-protected-to-unprotected-Segal}. 
\end{proof} 
 
Combining the previous theorem with Theorem~\ref{thm:CM} we obtain the following 

\begin{corollary}
  There is a sequence of (weak) homotopy equivalences in the Berger-Moerdijk category of strict topological operads:
  \[
  \xymatrix
 @C=20pt
 {
  \t{NodHD}^\t{tree}_\t{protected} & W_{\t{Seg}}|C_\Delta\big(\t{NodHd}^\t{tree}_\t{protected}\big)| \ar[l]\ar[d] & \\
  & W_{\t{Seg}}|C_\Delta\big(\t{NodHD}^\t{tree}_\t{protected}\big)| &  W_{\t{Seg}}|C_\Delta (\t{DM}^\t{tree})| . \ar[l]
}
 \]
 \qed
  \end{corollary}
Here the operad $W_{\t{Seg}}|C_\Delta (\t{DM}^\t{tree})|$ is the ``topological classifying operad'' associated to the Deligne-Mumford stack, and hence represents a topological ``resolution'' of the operad in topological moduli problems $\t{DM}^\t{tree}$.

\bibliographystyle{plain}
\bibliography{operadbibliography}{}

\begin{thebibliography}{10}

\bibitem{abouzaid2010}
Mohammed Abouzaid.
\newblock A geometric criterion for generating the {F}ukaya category.
\newblock {\em Publ. Math. Inst. Hautes \'{E}tudes Sci.}, (112):191--240, 2010.

\bibitem{abouzaid-groman-varolgunes}
Mohammed Abouzaid, Yoel Groman, and Umut Varolgunes.
\newblock Framed $e_2$ structures in {F}loer theory.
\newblock {\em preprint ar{X}iv:2210.11027}, 2022.

\bibitem{barwick}
Clark Barwick.
\newblock From operator categories to higher operads.
\newblock {\em Geom. Topol.}, 22(4):1893--1959, 2018.

\bibitem{berger_fresse}
Clemens Berger and Benoit Fresse.
\newblock Combinatorial operad actions on cochains.
\newblock {\em Math. Proc. Cambridge Philos. Soc.}, 137(1):135--174, 2004.

\bibitem{berger_moerdijk_model}
Clemens Berger and Ieke Moerdijk.
\newblock Axiomatic homotopy theory for operads.
\newblock {\em Comment. Math. Helv.}, 78(4):805--831, 2003.

\bibitem{berger_moerdijk_bv}
Clemens Berger and Ieke Moerdijk.
\newblock The {B}oardman-{V}ogt resolution of operads in monoidal model
  categories.
\newblock {\em Topology}, 45(5):807--849, 2006.

\bibitem{Candelas_et_al}
Philip Candelas, Xenia~C. de~la Ossa, Paul~S. Green, and Linda Parkes.
\newblock A pair of {C}alabi-{Y}au manifolds as an exactly soluble
  superconformal theory.
\newblock {\em Nuclear Phys. B}, 359(1):21--74, 1991.

\bibitem{chu-haugseng-heuts}
Hongyi Chu, Rune Haugseng, and Gijs Heuts.
\newblock Two models for the homotopy theory of {$\infty$}-operads.
\newblock {\em J. Topol.}, 11(4):857--873, 2018.

\bibitem{cisinski-moerdijk-dendroidal-segal-spaces}
Denis-Charles Cisinski and Ieke Moerdijk.
\newblock Dendroidal {S}egal spaces and {$\infty$}-operads.
\newblock {\em J. Topol.}, 6(3):675--704, 2013.

\bibitem{cisinski-moerdijk-dendroidal-sets}
Denis-Charles Cisinski and Ieke Moerdijk.
\newblock Dendroidal sets and simplicial operads.
\newblock {\em J. Topol.}, 6(3):705--756, 2013.

\bibitem{cole}
Michael Cole.
\newblock Mixing model structures.
\newblock {\em Topology Appl.}, 153(7):1016--1032, 2006.

\bibitem{costello2005}
Kevin Costello.
\newblock The {G}romov-{W}itten potential associated to a {TCFT}.
\newblock {\em preprint arXiv:math/0509264}, 2005.

\bibitem{costello2009}
Kevin Costello.
\newblock The partition function of a topological field theory.
\newblock {\em J. Topol.}, 2(4):779--822, 2009.

\bibitem{Cox-Katz}
David~A. Cox and Sheldon Katz.
\newblock {\em Mirror symmetry and algebraic geometry}, volume~68 of {\em
  Mathematical Surveys and Monographs}.
\newblock American Mathematical Society, Providence, RI, 1999.

\bibitem{caldararu-costello-tu-2020}
Andrei C\u{a}ld\u{a}raru, Kevin Costello, and Junwu Tu.
\newblock Categorical {E}numerative {I}nvariants, {I}: {S}tring vertices.
\newblock {\em preprint ar{X}iv:2009.06673}, 2020.

\bibitem{caldararu-tu-2020}
Andrei C\u{a}ld\u{a}raru and Junwu Tu.
\newblock Categorical {E}numerative {I}nvariants, {II}: {G}ivental formula.
\newblock {\em preprint ar{X}iv:2009.06659}, 2020.

\bibitem{deshmukh2022}
Yash Deshmukh.
\newblock A homotopical description of {D}eligne-{M}umford compactifications.
\newblock {\em preprint arXiv:2211.05168}, 2005.

\bibitem{dotsenko_shadrin_vallette}
Vladimir Dotsenko, Sergey Shadrin, and Bruno Vallette.
\newblock Givental action and trivialisation of circle action.
\newblock {\em J. \'{E}c. polytech. Math.}, 2:213--246, 2015.

\bibitem{drummond2013homotopically}
Gabriel~C. Drummond-Cole.
\newblock Homotopically trivializing the circle in the framed little disks.
\newblock {\em Journal of Topology}, 7(3):641--676, 2013.

\bibitem{drummond-cole-vallette}
Gabriel~C. Drummond-Cole and Bruno Vallette.
\newblock The minimal model for the {B}atalin-{V}ilkovisky operad.
\newblock {\em Selecta Math. (N.S.)}, 19(1):1--47, 2013.

\bibitem{dwyer-kan}
William~G. Dwyer and Daniel~M. Kan.
\newblock Homotopy theory and simplicial groupoids.
\newblock In {\em Indagationes Mathematicae (Proceedings)}, volume~87, pages
  379--385, 1984.

\bibitem{dwyer-spalinski}
William~G. Dwyer and Jan Spali\'{n}ski.
\newblock Homotopy theories and model categories.
\newblock In {\em Handbook of algebraic topology}, pages 73--126.
  North-Holland, Amsterdam, 1995.

\bibitem{ganatra}
Sheel Ganatra.
\newblock {\em Symplectic {C}ohomology and {D}uality for the {W}rapped {F}ukaya
  {C}ategory}.
\newblock ProQuest LLC, Ann Arbor, MI, 2012.
\newblock Thesis (Ph.D.)--Massachusetts Institute of Technology.

\bibitem{ginot}
Gr\'egory Ginot.
\newblock {\em Introduction \`a l'homotopie}.
\newblock Lecture notes, 2018-2019. {A}vailable online at\\ {\tiny
  \href{https://www.math.univ-paris13.fr/~ginot/Homotopie/Ginot-homotopie2019.pdf}{\tt
  https://www.math.univ-paris13.fr/$\sim$ginot/Homotopie/Ginot-homotopie2019.pdf}}.

\bibitem{goerss-jardine}
Paul~G. Goerss and John~F. Jardine.
\newblock {\em Simplicial homotopy theory}, volume 174 of {\em Progress in
  Mathematics}.
\newblock Birkh\"{a}user Verlag, Basel, 1999.

\bibitem{harris-morrison}
Joe Harris and Ian Morrison.
\newblock {\em Moduli of curves}, volume 187.
\newblock Springer Science \& Business Media, 2006.

\bibitem{heuts-hinich-moerdijk}
Gijs Heuts, Vladimir Hinich, and Ieke Moerdijk.
\newblock On the equivalence between {L}urie's model and the dendroidal model
  for infinity-operads.
\newblock {\em Adv. Math.}, 302:869--1043, 2016.

\bibitem{heuts-moerdijk}
Gijs Heuts and Ieke Moerdijk.
\newblock {\em Simplicial and dendroidal homotopy theory}, volume~75 of {\em
  Ergebnisse der Mathematik und ihrer Grenzgebiete. 3. Folge}.
\newblock Springer, Cham, 2022.

\bibitem{hinich}
Vladimir Hinich.
\newblock Homological algebra of homotopy algebras.
\newblock {\em Comm. Algebra}, 25(10):3291--3323, 1997.

\bibitem{hirschhorn}
Philip~S. Hirschhorn.
\newblock {\em Model categories and their localizations}, volume~99 of {\em
  Mathematical Surveys and Monographs}.
\newblock American Mathematical Society, Providence, RI, 2003.

\bibitem{hovey}
Mark Hovey.
\newblock {\em Model categories}, volume~63 of {\em Mathematical Surveys and
  Monographs}.
\newblock American Mathematical Society, Providence, RI, 1999.

\bibitem{johnstone-vol1}
Peter~T. Johnstone.
\newblock {\em Sketches of an elephant: a topos theory compendium. {V}ol. 1},
  volume~43 of {\em Oxford Logic Guides}.
\newblock The Clarendon Press, Oxford University Press, New York, 2002.

\bibitem{khoroshkin_markarian_shadrin}
Anton Khoroshkin, Nikita Markarian, and Sergey Shadrin.
\newblock Hypercommutative operad as a homotopy quotient of {BV}.
\newblock {\em Comm. Math. Phys.}, 322(3):697--729, 2013.

\bibitem{kock2011}
Joachim Kock.
\newblock Polynomial functors and trees.
\newblock {\em Int. Math. Res. Not. IMRN}, (3):609--673, 2011.

\bibitem{kontsevich-manin}
M.~Kontsevich and Yu. Manin.
\newblock Gromov-{W}itten classes, quantum cohomology, and enumerative
  geometry.
\newblock {\em Comm. Math. Phys.}, 164(3):525--562, 1994.

\bibitem{kontsevichICM94}
Maxim Kontsevich.
\newblock Homological algebra of mirror symmetry.
\newblock In {\em Proceedings of the {I}nternational {C}ongress of
  {M}athematicians, {V}ol. 1, 2 ({Z}\"{u}rich, 1994)}, pages 120--139.
  Birkh\"{a}user, Basel, 1995.

\bibitem{kontsevich-soibelman-deligne}
Maxim Kontsevich and Yan Soibelman.
\newblock Deformations of algebras over operads and the {D}eligne conjecture.
\newblock In {\em Conf\'{e}rence {M}osh\'{e} {F}lato 1999, {V}ol. {I}
  ({D}ijon)}, volume~21 of {\em Math. Phys. Stud.}, pages 255--307. Kluwer
  Acad. Publ., Dordrecht, 2000.

\bibitem{lurie-htt}
Jacob Lurie.
\newblock {\em Higher topos theory}, volume 170 of {\em Annals of Mathematics
  Studies}.
\newblock Princeton University Press, Princeton, NJ, 2009.

\bibitem{lurie-higher-algebra}
Jacob Lurie.
\newblock {\em Higher Algebra}.
\newblock Available online at\\ {\tiny
  \href{https://www.math.ias.edu/~lurie/papers/HA.pdf}{\tt
  https://www.math.ias.edu/$\sim$lurie/papers/HA.pdf}}, 2017.

\bibitem{MSS}
Martin Markl, Steve Shnider, and Jim Stasheff.
\newblock {\em Operads in algebra, topology and physics}, volume~96 of {\em
  Mathematical Surveys and Monographs}.
\newblock American Mathematical Society, Providence, RI, 2002.

\bibitem{May}
J.~Peter May.
\newblock {\em The geometry of iterated loop spaces}.
\newblock Springer-Verlag, Berlin-New York, 1972.
\newblock Lectures Notes in Mathematics, Vol. 271.

\bibitem{may_concise}
J.~Peter May.
\newblock {\em A concise course in algebraic topology}.
\newblock Chicago Lectures in Mathematics. University of Chicago Press,
  Chicago, IL, 1999.

\bibitem{may-ponto}
J.~Peter May and Kate Ponto.
\newblock {\em More concise algebraic topology. Localization, completion, and
  model categories}.
\newblock Chicago Lectures in Mathematics. University of Chicago Press,
  Chicago, IL, 2012.

\bibitem{mcclure-smith}
James~E. McClure and Jeffrey~H. Smith.
\newblock A solution of {D}eligne's {H}ochschild cohomology conjecture.
\newblock In {\em Recent progress in homotopy theory ({B}altimore, {MD},
  2000)}, volume 293 of {\em Contemp. Math.}, pages 153--193. Amer. Math. Soc.,
  Providence, RI, 2002.

\bibitem{moerdijk-weiss2007}
Ieke Moerdijk and Ittay Weiss.
\newblock Dendroidal sets.
\newblock {\em Algebr. Geom. Topol.}, 7:1441--1470, 2007.

\bibitem{nlab:proper_model_category}
{nLab authors}.
\newblock proper model category.
\newblock \\ {\tiny
  \href{https://ncatlab.org/nlab/show/proper+model+category}{\tt
  https://ncatlab.org/nlab/show/proper+model+category}}.
\newblock v. 56, 11/2024.

\bibitem{nlab:stroem_model_structure}
{nLab authors}.
\newblock {{S}}tr{\o}m model structure.
\newblock \\ {\tiny
  \href{https://ncatlab.org/nlab/show/Str\%C3\%B8m+model+structure}{\tt
  https://ncatlab.org/nlab/show/Str{\o}m+model+structure}}.
\newblock v. 36, 11/2024.

\bibitem{noohi}
Behrang Noohi.
\newblock Foundations of topological stacks {I}.
\newblock {\em arXiv preprint math/0503247}, 2005.

\bibitem{quillen}
Daniel~G. Quillen.
\newblock {\em Homotopical algebra}.
\newblock Lecture Notes in Mathematics, No. 43. Springer-Verlag, Berlin-New
  York, 1967.

\bibitem{quillen_k}
Daniel~G. Quillen.
\newblock {Higher algebraic K-theory: I}.
\newblock In {\em Higher K-Theories: Proceedings of the Conference held at the
  Seattle Research Center of the Battelle Memorial Institute, from August 28 to
  September 8, 1972}, pages 85--147. Springer, 2006.

\bibitem{schottky}
Friedrich Schottky.
\newblock Ueber die conforme {A}bbildung mehrfach zusammenh\"angender ebener
  {F}l\"achen.
\newblock {\em J. Reine Angew. Math.}, 83:300--351, 1877.

\bibitem{strickland}
Neil Strickland.
\newblock The category of {CGWH} spaces.
\newblock {\em Available online at {\tiny
  \href{https://ncatlab.org/nlab/files/StricklandCGHWSpaces.pdf}{\tt
  https://ncatlab.org/nlab/files/StricklandCGHWSpaces.pdf}}}, 2009.

\bibitem{strom}
Arne Str{\o}m.
\newblock The homotopy category is a homotopy category.
\newblock {\em Arch. Math. (Basel)}, 23:435--441, 1972.

\bibitem{tamarkin}
Dmitry Tamarkin.
\newblock Another proof of {M}.~{K}ontsevich formality theorem.
\newblock {\em preprint arXiv:math/9803025}, 1998.

\bibitem{tu2021}
Junwu Tu.
\newblock Categorical enumerative invariants of the ground field.
\newblock {\em preprint ar{X}iv:2103.01383}, 2021.

\bibitem{vaintrob_motivic}
Dmitry Vaintrob.
\newblock Moduli of framed formal curves.
\newblock {\em preprint arXiv:1910.11550}, 2019.

\bibitem{vistoli}
Angelo Vistoli.
\newblock Grothendieck topologies, fibered categories and descent theory.
\newblock In {\em Fundamental algebraic geometry}, volume 123 of {\em Math.
  Surveys Monogr.}, pages 1--104. Amer. Math. Soc., Providence, RI, 2005.

\bibitem{vogt_cofibrant}
Rainer~M. Vogt.
\newblock Cofibrant operads and universal {$E_\infty$} operads.
\newblock {\em Topology Appl.}, 133(1):69--87, 2003.

\bibitem{voronov}
Alexander~A. Voronov.
\newblock Homotopy {G}erstenhaber algebras.
\newblock In {\em Conf\'{e}rence {M}osh\'{e} {F}lato 1999, {V}ol. {II}
  ({D}ijon)}, volume~22 of {\em Math. Phys. Stud.}, pages 307--331. Kluwer
  Acad. Publ., Dordrecht, 2000.

\bibitem{weibel}
Charles~A. Weibel.
\newblock {\em An introduction to homological algebra}, volume~38 of {\em
  Cambridge Studies in Advanced Mathematics}.
\newblock Cambridge University Press, Cambridge, 1994.

\end{thebibliography}

\end{document}